\numberwithin{equation}{section}
\newtheorem{theorem}{Theorem}[section]
\newtheorem{lemma}[theorem]{Lemma}
\newtheorem{proposition}[theorem]{Proposition}
\newtheorem{corollary}[theorem]{Corollary}
\theoremstyle{definition}
\theoremstyle{remark}
\newtheorem{remark}[theorem]{Remark}
\newtheorem{fact}[theorem]{Fact}
\newtheorem{example}[theorem]{Example}
\newtheorem{observation}[theorem]{Observation}
\newtheorem{discussion}[theorem]{Discussion}
\newtheorem{question}[theorem]{Question}
\newtheorem{conjecture}[theorem]{Conjecture}
\newcommand{\sing}{\operatorname{sing}}
\newcommand{\Ass}{\operatorname{Ass}}
\newcommand{\grade}{\operatorname{grade}}
\newcommand{\Assh}{\operatorname{Assh}}
\newcommand{\Spec}{\operatorname{Spec}}
\newcommand{\rad}{\operatorname{rad}}
\newcommand{\cd}{\operatorname{cd}}
\newcommand{\Ht}{\operatorname{ht}}
\newcommand{\pd}{\operatorname{p.dim}}
\newcommand{\Proj}{\operatorname{Proj}}
\newcommand{\F}{\operatorname{F}}
\newcommand{\zd}{\operatorname{zd}}
\newcommand{\V}{\operatorname{V}}
\newcommand{\id}{\operatorname{id}}
\newcommand{\Ext}{\operatorname{Ext}}
\newcommand{\D}{\operatorname{D}}
\newcommand{\E}{\operatorname{E}}
\newcommand{\Supp}{\operatorname{Supp}}
\newcommand{\Hom}{\operatorname{Hom}}
\newcommand{\Soc}{\operatorname{Soc}}
\newcommand{\Ann}{\operatorname{Ann}}
\newcommand{\depth}{\operatorname{depth}}
\newcommand{\Char}{\operatorname{char}}
\newcommand{\coker}{\operatorname{coker}}
\newcommand{\HH}{\operatorname{H}}
\newcommand{\Se}{\operatorname{S}}
\newcommand{\q}{q}
\newcommand{\p}{p}
\newcommand{\f}{f}
\newcommand{\vpl}{\operatornamewithlimits{\varprojlim}}
\newcommand{\lo}{\longrightarrow}
\newcommand{\fm}{\frak{m}}
\newcommand{\fp}{\frak{p}}
\newcommand{\fq}{\frak{q}}
\newcommand{\fa}{\frak{a}}
\newcommand{\fb}{\frak{b}}
\newcommand{\fn}{\frak{n}}
\newcommand{\PP}{\mathbb{P}}
\begin{document}

\author[]{mohsen asgharzadeh}

\address{}
\email{mohsenasgharzadeh@gmail.com}

\title[ ]
{on the support of local and  formal cohomology}

\subjclass[2010]{ Primary 13D45, 14Fxx}
\keywords{  Matlis duality; local cohomology; regular local rings; support and injective dimension.
}

\begin{abstract}
We compute support of formal cohomology modules in a serial of  non-trivial cases.
Applications are given.
For example, we compute injective dimension of certain local cohomology modules in terms  of dimension of their's support.
\end{abstract}

\maketitle

\section{introduction}

Throughout the introduction $(R, \fm,k)$ is a commutative noetherian regular local ring containing a field
 and  $I\lhd R$.
The notation $\D(-) := \Hom_R(-, \E_R(k))$ stands for the Matlis duality.
In view of \cite{li}
$\Supp(\D(\HH^i_I(R))) = \Spec(R)$ provided $\HH^i_I(R)
\neq0$ and $\Char R\neq 0$.

\begin{conjecture}\label{conj1}(Lyubeznik-Yildirim)
If $\HH^i_I(R)
\neq0$, then $\Supp(\D(\HH^i_I(R))) = \Spec(R)$.
\end{conjecture}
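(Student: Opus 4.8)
The plan is to convert the conjecture, via Matlis and local duality, into a statement about formal cohomology and then attack it degree by degree. First reduce to $R$ complete: since $R$ is a domain, $\Supp(\D(\HH^i_I(R)))=\Spec(R)$ is equivalent to $(0)$ lying in this support, hence --- support being stable under specialization, and a minimal prime of a support being associated --- to $(0)\in\Ass(\D(\HH^i_I(R)))$; and since $\E_R(k)=\E_{\widehat R}(k)$ with $\widehat R$ again a regular local domain, this property is unchanged on passing to $\widehat R$. Over the complete ring the submodules of $\E_R(k)$ are exactly the $(0:_{\E_R(k)}J)\cong\D(R/J)$, of annihilator $J$, so a homomorphism $\HH^i_I(R)\to\E_R(k)$ has zero annihilator precisely when it is onto; hence the conjecture becomes: \emph{if $\HH^i_I(R)\neq0$ then $\HH^i_I(R)$ surjects onto $\E_R(k)$.} Moreover, with $d=\dim R$, from $\HH^i_I(R)=\varinjlim_n\Ext^i_R(R/I^n,R)$ and local duality over the complete Gorenstein ring $R$ (so $\D(\Ext^i_R(N,R))\cong\HH^{d-i}_{\fm}(N)$ for finitely generated $N$) one obtains the canonical identification
\[
\D(\HH^i_I(R))\;\cong\;\varprojlim_n\HH^{d-i}_{\fm}(R/I^n),
\]
the formal local cohomology of $I$ in degree $d-i$; so the conjecture says this inverse limit has $(0)$ as an associated prime, i.e.\ contains a free submodule isomorphic to $R$. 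This is the link to the formal cohomology computed in the rest of the paper.

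Next, isolate where the difficulty lies. If $\Gamma_{\fm}(\HH^i_I(R))\neq0$, then this submodule is a $D$-module (equicharacteristic zero) resp.\ an $F$-module (characteristic $p$) supported only at $\fm$ and of finite length in the relevant category, hence $\cong\E_R(k)^{\oplus\lambda}$ with $\lambda\ge1$ by Kashiwara resp.\ Lyubeznik; being injective it splits off, so $\E_R(k)$ is even a direct summand of $\HH^i_I(R)$ and we are done. Thus the genuine content is the case $\Gamma_{\fm}(\HH^i_I(R))=0$, equivalently the vanishing Lyubeznik number $\lambda_{0,d-i}(R/I)=0$: one must then produce a surjection $\HH^i_I(R)\twoheadrightarrow\E_R(k)$ whose kernel is \emph{not} a complemented $\fm$-torsion submodule. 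In characteristic $p$ this is settled by \cite{li}, so the open case is equicharacteristic zero.

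For the extreme cohomological degrees I would argue directly. When $i=\Ht I$ the dual $\varprojlim_n\HH^{\dim R/I}_{\fm}(R/I^n)$ is the top formal local cohomology; its transition maps are surjective (top local cohomology is right exact on modules of bounded dimension), so $\varprojlim^1$ vanishes, and the non-vanishing and near-faithfulness results of Schenzel and Hellus for this module, valid under mild hypotheses on $R/I$ (unmixed, or Cohen--Macaulay), should produce the free submodule. When $i=\cd(I)$ one exploits that $\HH^{\cd(I)}_I(R)$ is the top non-vanishing local cohomology, whose $\fm$-torsion is comparatively rigid --- in the frequent case that it is already supported at $\fm$, the previous paragraph applies. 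For intermediate $i$ the plan is to induct on $\cd(I)-\Ht I$, or to pass to a general hyperplane section and induct on $\dim R/I$, pushing the problem toward the extremes while keeping the ambient category under control through holonomicity.

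The main obstacle, as I see it, is exactly the intermediate range together with $\Gamma_{\fm}(\HH^i_I(R))=0$. There the tower $\{\HH^{d-i}_{\fm}(R/I^n)\}$ is no longer surjective --- the middle local cohomology functors are neither left nor right exact in $R/I^n$ --- so $\varprojlim^1$ terms interfere, and exhibiting an element of the large inverse limit with zero annihilator becomes a delicate diagonalization/compactness problem with no obvious finite model. This is precisely where the Frobenius does the essential work in \cite{li}, and that argument has no evident equicharacteristic-zero analogue; a successful attack would presumably require a new structural handle on the $D$-module $\HH^i_I(R)$ itself --- for instance control of its cosupport, or of its behaviour under generic projections --- rather than the formal-cohomology bookkeeping alone.
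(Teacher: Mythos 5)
The statement you were asked about is a \emph{conjecture}: the paper contains no proof of it, and indeed it remains open in equicharacteristic zero (the characteristic-$p$ case being the theorem of Lyubeznik--Yildirim in \cite{li} that the introduction quotes). So there is nothing in the paper for your argument to be measured against as a complete proof, and your proposal, as you yourself say plainly, is not one: after the reductions, the whole content of the conjecture sits exactly in the case $\Gamma_{\fm}(\HH^i_I(R))=0$ in intermediate cohomological degrees, and your final two paragraphs offer only a plan (induction on $\cd(I)-\Ht(I)$, hyperplane sections, appeals to Schenzel/Hellus-type nonvanishing) without a mechanism that produces an element of $\varprojlim_n\HH^{d-i}_{\fm}(R/I^n)$ with zero annihilator. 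That is the genuine gap, and it is the same gap the paper leaves open; the paper's contribution is only a list of special cases (dimension $\le 3$, $\cd(I)=\dim R-1$ with $I$ unmixed, primes of height $h$ in $\mathbb{Q}[x_1,\ldots,x_n]_{(x_1,\ldots,x_n)}$ in degree $i=h$, monomial ideals, locally CI primes, etc.).

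That said, your preparatory reductions are sound and are essentially the same toolkit the paper uses for its partial results: the identification $\D(\HH^i_I(R))\simeq\varprojlim_n\HH^{d-i}_{\fm}(R/I^n)$ is Fact \ref{fd} (formal duality); the criterion ``$\Supp=\Spec(R)$ iff $0\in\Ass$ iff $\HH^i_I(R)$ surjects onto $\E_R(k)$'' is implicit in Steps 2 and 5 of Proposition \ref{simon} and in the Mayer--Vietoris examples; and the observation that $\fm$-torsion pieces are injective, hence split off and give flat (so faithful) duals, is exactly how Corollaries \ref{lci} and \ref{6} are proved. Where the paper's strongest unconditional case (Proposition \ref{simon}, $i=\Ht(\fp)$ over $\mathbb{Q}[x_1,\ldots,x_n]_{(x_1,\ldots,x_n)}$) goes beyond your sketch is in its two specific ingredients: Simon's Baire-category argument, which for a \emph{countable} ring upgrades $\V(I)\subset\Supp$ plus $I$-adic completeness of the dual to $\Supp=\V(\Ann)$, and the faithfulness of $\HH^h_{\fp}(R)$ obtained from Lyubeznik's bound $\id\le\dim\Supp$ at a minimal prime of the support; your appeal to surjectivity of the transition maps in the top formal cohomology tower does not by itself yield faithfulness, so even in the extreme degree your route would need this (or an equivalent) extra input. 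In short: correct framing and correct identification of where the difficulty lies, but no proof exists --- neither yours nor the paper's --- and your sketch of the ``extreme degree'' cases still needs the countability/Baire and injective-dimension arguments that the paper supplies.
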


Let $R_n:=\mathbb{Q}[x_1,\ldots,x_n]_{(x_1,\ldots,x_n)}$ and  $\fp\in\Spec(R_n)$ be of height $h$.  In \S 3 we show $\Supp (\D(\HH^h_{\fp}(R_n))) = \Spec(R_n)$.
This yields
Conjecture \ref{conj1}  for some primes in $R_n$, see  Corollaries \ref{coprime}, \ref{nprime}  and \ref{6}. In Corollary \ref{cmo} we check Conjecture \ref{conj1} for any
 monomial ideal with respect to a regular sequence over Cohen-Macaulay rings.
In \S 4  we connect $\D(\HH^i_I(R))$  to topology of varieties. As an application, we  present Conjecture \ref{conj1} for locally complete-intersection (abb.  locally CI) prime ideals of $R_n$. Recall that $I$ is called locally CI if $I_{\fp}$ is CI for all $\fp\in \V(I)\setminus\{\fm\}$.

In the case of prime characteristic for the Cohen-Macaulay ideal $I\lhd R$,
Peskine and Szpiro proved  that $I$ is
 cohomologically CI in the sense that
$\HH^i_I(R)=0$ for all $i\neq\Ht(I)$. In particular, $\dim(\HH^i_ I(R)) = \id_R(\HH^i_ I(R)) $.

\begin{question}(Hellus, see \cite[Question 2.13]{Hel3})
Let  $I$ be Cohen-Macaulay. Is $\id_R(\HH^i_ I(R)) = \dim_R(\HH^i_ I(R))$ for any $i$?
\end{question}
We prove this in  dimension $5$. The same thing holds in dimension $6$ if the ring is essentially of finite type over $k$  (under the weaker generalized Cohen-Macaulay assumption), and in dimension $7$ if in addition $I$ is Gorenstein.

\begin{question}(Hellus, see \cite[Question 2.8]{Hel3})
 When does $\id(\HH^i_ I(R)) = \dim(\HH^i_ I(R))$ hold?
\end{question}
When $I$ is locally CI and equi-dimensional, we show $\id(\HH^i_ I(R)) = \dim(\HH^i_ I(R))$.  If  $\dim(R/I) = 2$ and $I$ is equi-dimensional  we show $\id(\HH^i_ I(R)) = \dim(\HH^i_ I(R))$. This drops  two technical assumptions of  \cite[Corollary 2.6(ii)]{Hel3}.

We present four
situations for which  $\f_I(R)=\q_I(R)=\Ht(I)$: i) $I$  is locally CI  and equi-dimensional, ii) $I$ is equi-dimensional and $\dim(R/I) = 2$, iii) $I$ is monomial (not necessarily squarefree) and $R/I$ is generalized Cohen-Macaulay, iv) $\dim R<7$ (or $\Char R>0$) and $R/I$ is generalized Cohen-Macaulay.
This is a local version of:
\begin{theorem}(Hartshorne, see \cite[Theorem III.5.2]{H4})
 Suppose $Y\subset \PP^n_k$ is nonsingular and of dimension $s$. Then $\p(U)=\q(U)=n-s-1$
 and in the prime characteristic case, it suffices to assume $Y$ is Cohen-Macaulay.
\end{theorem}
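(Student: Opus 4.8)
The plan is to deduce the projective statement from its local counterpart proved in \S\S 3--4. Write $S=k[x_0,\dots,x_n]$ with irrelevant maximal ideal $\fm$, set $R:=S_{\fm}$, and let $I\lhd R$ be the localisation of the homogeneous ideal of $Y$, so that $\Ht(I)=\dim R-\dim R/I=(n+1)-(s+1)=n-s$. First one transfers the hypotheses on $Y$ to $I$. The punctured affine cone $C(Y)\setminus\{0\}$ is a $\mathbb{G}_m$-bundle over $Y$, hence is nonsingular --- respectively Cohen--Macaulay, in the prime characteristic case --- as soon as $Y$ is; therefore $(R/I)_{\fp}$ is regular, so $I_{\fp}$ is a complete intersection, for every $\fp\in\V(I)\setminus\{\fm\}$, and $R/I$ is equi-dimensional because $Y$ is irreducible. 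Thus $I$ is locally CI and equi-dimensional, which is case (i) of the local theorem; in positive characteristic one instead notes, via Serre duality, that $\HH^i_{\fm}(R/I)\cong\bigoplus_{j}\HH^{i-1}(Y,\mathcal O_Y(j))$ has finite length for $i<\dim R/I$, so $R/I$ is generalized Cohen--Macaulay and case (iv) applies. Either way we obtain $\f_I(R)=\q_I(R)=\Ht(I)=n-s$.

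It remains to match the two invariants through the cone dictionary, with the customary shift by one between projective and affine/local cohomology. For $\q$: since $\mathbb A^{n+1}\setminus C(Y)$ is a $\mathbb{G}_m$-bundle with affine fibres over $U=\PP^n\setminus Y$, one has $\cd(U)=\cd(\mathbb A^{n+1}\setminus C(Y))$, and as $\mathbb A^{n+1}$ is affine the local-cohomology sequence identifies $\HH^{i}(\mathbb A^{n+1}\setminus C(Y),\mathcal O)$ with $\HH^{i+1}_I(S)$ in the pertinent range; hence $\q(U)=\cd(U)=\q_I(R)-1=n-s-1$. (When $s=n-1$, $Y$ is a hypersurface and $U$ is affine, consistent with the formula.) For $\p$: the same dictionary, now via Hartshorne's comparison of the cohomology of the formal completion $\widehat{\PP^n}_{/Y}$ with Schenzel's formal local cohomology $\varprojlim_{t}\HH^{\bullet}_{\fm}(R/I^{t})$ of the cone, shifts the formal grade down by one, so $\p(U)=\f_I(R)-1=n-s-1$. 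Combining, $\p(U)=\q(U)=n-s-1$.

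The crux --- and the place where smoothness (or, in characteristic $p$, Cohen--Macaulayness) is genuinely used --- is the upper bound, equivalently the equality $\cd_I(R)=\Ht(I)$ asserting that $I$ is cohomologically a complete intersection: the local CI condition alone does not bound $\cd_I(R)$ from above, and one must also bring in the global positivity of $\mathcal O_{\PP^n}(1)$, which is precisely what is folded into the proof of case (i). A direct argument for this step would invoke the Frobenius iteration of Peskine--Szpiro in characteristic $p$ (recalled in the introduction) and the de Rham/Hodge-theoretic connectedness of the smooth $Y$ in characteristic $0$. A smaller but real point to check is the bookkeeping of the ``$-1$'' in the dictionary together with the standing irreducibility (equi-dimensionality) of $Y$: without it $U$ is disconnected near $Y$ and $\q(U)$ jumps up to $n-1$.
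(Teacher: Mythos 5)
You should know at the outset that the paper does not prove this statement at all: it is quoted from Hartshorne's book \cite[Theorem III.5.2]{H4} as the global antecedent, and the remark after it attributes the prime characteristic case to Peskine--Szpiro and the squarefree case to Richardson; what the paper proves are the \emph{local} analogues (the four situations with $\f_I(R)=\q_I(R)=\Ht(I)$). So your strategy of deducing the projective theorem from those local results is in the right spirit, but judged on its own it has genuine gaps, and the main one is the ``dictionary.'' The identity $\q(U)=\cd(U)$ that carries your computation of $\q$ is false: $\q(U)$ measures failure of finite-dimensionality, not of vanishing, and the two differ already for Hartshorne's skew lines ($\q(U)=1$, $\cd(U)=2$). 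Worse, it is false under the very hypotheses of the theorem: for the smooth Segre variety $Y=\PP^1\times\PP^2\subset\PP^5$ (the $2\times 3$ determinantal ideal $I_2$ discussed in the paper just before Corollary \ref{g3}) one has $\cd(I_2)=3>2=\Ht(I_2)$, so $\cd(U)=2$ while the theorem asserts $\q(U)=n-s-1=1$. Hence the chain $\q(U)=\cd(U)=\q_I(R)-1$ cannot be repaired as stated, and your ``crux'' paragraph misidentifies what must be proved: the theorem does \emph{not} require $\cd_I(R)=\Ht(I)$ (cohomological complete intersection fails for smooth $Y$ in characteristic $0$ in general); it requires that $\HH^i_I$ be artinian for $i>\Ht(I)$, non-artinian at $i=\Ht(I)$, and finitely generated (indeed zero) below --- which is exactly what the paper's local arguments (support in $\{\fm\}$ plus Lyubeznik/Huneke--Sharp finiteness) deliver, with no appeal to ``global positivity of $\mathcal{O}(1)$.''

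Even granting the correct local input $\f_I(R)=\q_I(R)=\Ht(I)$, the passage to $\p(U)$ and $\q(U)$ is asserted rather than proved, and this is where the real work of \cite[III.\S 5]{H4} lives: artinianness of the local modules $\HH^i_{IR_{\fm}}(R_{\fm})$ must first be converted into a statement about the graded modules $\HH^i_I(S)$, then into finite-dimensionality of $\HH^{i-1}(U,\mathcal{O}_U(j))$ for every twist, and finally extended to \emph{all} locally free (resp.\ coherent) sheaves on $U$ by extending to $\PP^n$ and running a descending induction over resolutions by sums of $\mathcal{O}(j)$; similarly the lower bound $\q(U)\geq n-s-1$ does not follow merely from non-artinianness of $\HH^{\Ht(I)}_I(S)$, since a non-artinian graded module can still have finite-dimensional graded pieces. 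The claim $\p(U)=\f_I(R)-1$ needs the same care (it quantifies over all locally free sheaves and rests on formal duality and comparison results of the type recorded in Fact \ref{fd} and Proposition \ref{clsw}, not on a formal shift-by-one). Your transfer of hypotheses to the cone (locally CI and equi-dimensional in the smooth case; generalized Cohen--Macaulay at the vertex in the CM/char $p$ case, where the finite-length check at $i=1$ and for negative twists via Serre duality should be written out) is fine, but without an honest proof of the graded/projective dictionary the argument does not yet establish Hartshorne's theorem.
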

 We remark that the  prime characteristic (resp. square-free)  case  is due to  Peskine-Szpiro (resp. Richardson).

\section{Matlis dual of local cohomology}

The $i$-th local cohomology of $(-)$  with
respect to  $\fa\lhd R$ is
$\HH^{i}_{\fa}(-):={\varinjlim}_n\Ext^{i}_{R} (R/\fa^{n},
-)$. By $\Supp(M)$ we mean $\{\fp\in \Spec(R):M_{\fp}\neq 0\}$. If $M$
is finitely generated then  $\Supp(M)=\V(\Ann(M))$. The finitely generated assumption
 is really needed. However,   for any module $M$,  always  $\Supp(M) \subseteq \V(\Ann_R(M)) $  holds.
 By $\dim (-)$  (resp.  $\id(-)$) we mean  $\dim(\Supp(-))$ (resp. injective dimension).
We start by showing that the module version of Conjecture \ref{conj1} is false:

\begin{example}
There is a local regular ring $A$ and a finitely generated module $M$ such that $\HH^1_{\fm}(M)\simeq R/ \fm$, see e.g. \cite{li}.
It is enough to remark  that $\Supp(\D(\HH^1_\fm(M))=\Supp(\D(R/\fm))=\Supp(R/\fm)=\{\fm\}.$
\end{example}

\begin{observation}\label{nfg}
Let $(A,\fm,k)$ be a local ring, $M$ any module (not necessarily finitely generated)  and $\fa$ be an ideal. Suppose $\Supp(\D(\HH^i_{\fa}(M)))\supsetneqq \{\fm\}$.
 Then $\HH^i_{\fa}(M)$  is not finitely generated.
\end{observation}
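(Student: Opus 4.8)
The plan is to prove the contrapositive: assuming $\HH^i_{\fa}(M)$ is finitely generated, I will show $\Supp(\D(\HH^i_{\fa}(M))) \subseteq \{\fm\}$. The key structural fact is that a finitely generated module $N$ over a local ring $(A,\fm,k)$ that is also $\fa$-torsion (which $\HH^i_{\fa}(M)$ always is, being a local cohomology module) must in fact be $\fm$-torsion when it has the right support, and more to the point: any nonzero finitely generated $\fa$-torsion module $N$ has $\fm \in \Supp(N)$. I would first recall that for a finitely generated module $N$, $\Supp(N) = \V(\Ann_A N)$, as noted in the text; hence $\Supp(\D(N))$ can be analyzed via $\Ann_A(\D(N)) = \Ann_A(N)$, the standard equality for Matlis duals over a complete ring — but since we are not assuming $A$ complete I should instead argue directly.

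The cleanest route: if $N := \HH^i_{\fa}(M)$ is finitely generated and nonzero, then it is $\fa$-torsion, so $\Supp(N) \subseteq \V(\fa)$, and being finitely generated and $\fa$-torsion over a noetherian ring forces $\sqrt{\Ann_A N} \supseteq \fa$, with $N$ actually killed by a power $\fa^t$. Now I claim $\D(N) = \Hom_A(N, \E_A(k))$ is an Artinian module, hence $\Supp(\D(N)) \subseteq \{\fm\}$: indeed $\D$ sends finitely generated modules to Artinian modules over any local ring (Matlis, since $\D(A/\fm^n)$ is finite length and $\D$ is exact and continuous, or simply because $N$ has a finite filtration with quotients quotients of $A$, whose duals are submodules of $\E_A(k)$, which is Artinian). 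An Artinian module is supported only at $\fm$. Therefore $\Supp(\D(N)) \subseteq \{\fm\}$, contradicting the hypothesis $\Supp(\D(\HH^i_{\fa}(M))) \supsetneqq \{\fm\}$. Hence $N$ is not finitely generated.

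The main obstacle is the potential non-completeness of $A$: over a non-complete local ring the identity $\Ann \D(N) = \Ann N$ can fail, so I must not route through it. The robust substitute is the filtration argument for Artinian-ness of $\D(N)$: write a finite filtration $0 = N_0 \subset N_1 \subset \cdots \subset N_r = N$ with each $N_j/N_{j-1} \cong A/\fp_j$ for primes $\fp_j \supseteq \fa$ (hence $\fp_j$ not necessarily $\fm$, but each $A/\fp_j$ is a finitely generated $A$-module); applying the exact functor $\D$ gives a filtration of $\D(N)$ with subquotients $\D(A/\fp_j)$, each of which is a submodule of $\E_A(k)$ and therefore Artinian. A finite extension of Artinian modules is Artinian, so $\D(N)$ is Artinian, and an Artinian $A$-module has support contained in $\{\fm\}$. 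This completes the argument; the only care needed is to make sure the prime filtration and the exactness of $\D$ are invoked correctly, both of which are standard.
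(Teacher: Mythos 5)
Your argument is correct, but it runs along a different track than the paper's. The paper's proof is a one-line localization: pick $\fp\in\Supp(\D(\HH^i_{\fa}(M)))\setminus\{\fm\}$; if $N:=\HH^i_{\fa}(M)$ were finitely generated, then (over the noetherian ring $A$) the natural map $\Hom_A(N,\E_A(k))_{\fp}\to\Hom_{A_{\fp}}(N_{\fp},\E_A(k)_{\fp})$ is an isomorphism, and since $\E_A(k)_{\fp}=0$ for $\fp\neq\fm$ the localized dual vanishes, contradicting the choice of $\fp$. You instead prove the stronger structural fact that $\D(N)$ is Artinian whenever $N$ is finitely generated, and then use that an Artinian module is $\fm$-torsion, hence supported only at $\{\fm\}$; this is a valid contrapositive and all the ingredients (prime filtration, exactness of $\D$, Artinianness of $\E_A(k)$ over any noetherian local ring, finite extensions of Artinian modules being Artinian) are standard, and your caution about not routing through $\Supp(\D(N))=\V(\Ann\D(N))$ is well placed, since $\D(N)$ need not be finitely generated. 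Two remarks: the digression about $N$ being $\fa$-torsion and $\fa^t N=0$ is not needed for your argument; and the prime filtration is heavier than necessary, since a surjection $A^n\twoheadrightarrow N$ dualizes to an embedding $\D(N)\hookrightarrow\E_A(k)^n$, which already gives Artinianness. The paper's route is shorter and needs only the local (at one prime) vanishing of $\E_A(k)$, while yours yields the sharper conclusion that the Matlis dual of any finitely generated module is Artinian, which pins down its support completely.
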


\begin{proof}
Let $\fp\in\Supp(\D(\HH^i_{\fa}(M)))\setminus \{\fm\}$. Suppose on the contradiction that  $\HH^i_{\fa}(M)$  is  finitely generated. It turns out that  $\D( \HH^i_{\fa}(M))_{\fp}\simeq\Hom_{A_{\fp}}(\HH^i_{\fa}(M)_{\fp},0)=0$,
 a contradiction.
 \end{proof}

In the case $M$ is finitely generated the following is in \cite[Corollary 3.10]{Sch}.

\begin{observation}\label{sch}
	Let $A$ be a local ring, $M$ be such that
	$\D(\HH^i_{\fa}(M))=\fa\D(\HH^i_{\fa}(M))$. Then $\D(\HH^i_{\fa}(M))=0$.
\end{observation}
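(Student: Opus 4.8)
The plan is to exploit the standard fact that Matlis duality converts "finitely generated" on one side into a module that is complete (hence $\fa$-adically separated when $\fa\subseteq\fm$) on the other, but here $\HH^i_{\fa}(M)$ need not be finitely generated, so instead I would work directly with the cosupport/support of the dual. The key observation is that $N:=\D(\HH^i_{\fa}(M))$ satisfies $N=\fa N$, hence by iteration $N=\fa^n N$ for every $n\ge 1$, so $N=\bigcap_{n\ge1}\fa^n N$. Now $N$ is a Matlis dual, and the Matlis dual of any module over a \emph{complete} local ring is again a finitely generated module — but we are not assuming completeness. The cleaner route: pass to the completion. Since $\E_A(k)=\E_{\widehat A}(k)$ and $\HH^i_{\fa}(M)$ is naturally a module over $\widehat A$ (it is $\fm$-torsion after dualizing? no — but $\D(-)$ is automatically an $\widehat A$-module, since $\E_A(k)$ is $\widehat A$-linear), we have $N=\D(\HH^i_{\fa}(M))$ is an $\widehat A$-module and $N=\fa\widehat A\cdot N$. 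So it suffices to prove the statement over the complete local ring $\widehat A$, i.e.\ we may assume $A$ is complete.

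Assuming $A$ complete, the crucial point is that $N=\D(L)$ for $L:=\HH^i_{\fa}(L)$, and duals of arbitrary modules into $\E_A(k)$ over a complete ring are exactly the "linearly compact" / "Matlis reflexive-ish" modules; in any case such $N$ is always \emph{separated} in a suitable topology so that $\bigcap_n \fa^n N=0$. More concretely: the key steps are (1) reduce to $A$ complete as above; (2) note $N=\fa^nN$ for all $n$; (3) show $\bigcap_n\fa^nN=0$ for $N$ a Matlis dual over a complete local ring. For step (3) I would use that $N=\D(L)$ and Matlis duality is exact and anti-equivalence on the relevant subcategory; an element $0\ne\varphi\in N=\Hom_A(L,\E_A(k))$ picks out some $x\in L$ with $\varphi(x)\ne0$, and $\varphi(x)$ lies in a finitely generated (Artinian) submodule $\E'\subseteq\E_A(k)$, which is killed by some power $\fm^t$, hence $\varphi(x)$ is killed by $\fm^t$; then writing $\varphi=\sum a_j\psi_j$ with $a_j\in\fa^t\subseteq\fm^t$ (possible if $\varphi\in\fa^t N$ and $t$ large) forces $\varphi(x)=\sum a_j\psi_j(x)\in\fm^t\E'=0$, contradiction. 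Hence $\varphi\notin\fa^tN$ for $t\gg0$, so $\bigcap_n\fa^nN=0$, and combined with step (2) we get $N=0$.

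The main obstacle is step (3): handling the fact that $\fa\ne\fm$ in general, so $\fa^n N$ need not shrink into the $\fm$-adic filtration in an obvious way. The resolution is precisely the $\fm^t$-torsion argument above applied to images of single elements, which only uses $\fa\subseteq\fm$ and the Artinian structure of finitely generated submodules of $\E_A(k)$; one must be a little careful that the finite expression $\varphi=\sum_{j=1}^r a_j\psi_j$ with all $a_j\in\fa^t$ exists whenever $\varphi\in\fa^tN$, which is immediate from the definition of $\fa^tN$. A secondary point to verify is that the reduction to $\widehat A$ is legitimate, i.e.\ that $\HH^i_{\fa\widehat A}(M\otimes_A\widehat A)$ versus $\HH^i_{\fa}(M)$ — but in fact we never need flatness of completion on $M$; we only need that $N$ is intrinsically an $\widehat A$-module with $\fa N=\fa\widehat A\, N$, which holds because $\fa\widehat A\cap A\supseteq\fa$ and $N$ is $\widehat A$-linear, so the equation $N=\fa N$ already takes place inside the $\widehat A$-module $N$.
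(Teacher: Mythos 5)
Your overall strategy --- establish $N=\fa^nN$ for all $n$ and then show $\bigcap_n\fa^nN=0$ --- is sound, and it is essentially the paper's footnoted second proof (which instead quotes Matlis's theorem that $\D(\HH^i_{\fa}(M))$ is $\fa$-adically complete). However, your justification of the separatedness step (3) contains a genuine error. After writing $\varphi=\sum_j a_j\psi_j$ with $a_j\in\fa^t$, you conclude $\varphi(x)=\sum_j a_j\psi_j(x)\in\fm^t\E'=0$. That containment is unjustified: the elements $\psi_j(x)$ are arbitrary elements of $\E_A(k)$ and have no reason to lie in the Artinian submodule $\E'$ containing $\varphi(x)$. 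All you can legitimately deduce is $\varphi(x)\in\fa^t\E_A(k)\subseteq\fm^t\E_A(k)$, and this does not contradict $\fm^t\varphi(x)=0$, because $\fm^t\E_A(k)$ is typically all of $\E_A(k)$ (the injective hull is $\fm$-divisible whenever $\depth A>0$). In short, being killed by $\fm^t$ does not keep an element out of $\fm^t\E_A(k)$, so the $\fm$-torsion of the injective hull is the wrong tool for this step.

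The missing idea is to use the $\fa$-torsion of $\HH^i_{\fa}(M)$ itself rather than the $\fm$-torsion of $\E_A(k)$: for your chosen $x$ with $\varphi(x)\neq0$ there is $s$ with $\fa^sx=0$, and then any expression $\varphi=\sum_ja_j\psi_j$ with $a_j\in\fa^s$ gives $\varphi(x)=\sum_j\psi_j(a_jx)=0$ by $A$-linearity of the $\psi_j$, a contradiction; hence $\varphi\notin\fa^sN$, so $\bigcap_n\fa^nN=0$, which together with $N=\fa^nN$ finishes the proof. With this repair your argument works over any local ring, and the reduction to $\widehat{A}$ becomes superfluous (it is harmless, though your parenthetical claim that Matlis duals over a complete local ring are finitely generated is false in general). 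For comparison, the paper's main proof is genuinely different: it chooses $\fp\in\Ass(\HH^i_{\fa}(M))$, notes $\fp\supseteq\fa$, and uses the Hom-evaluation isomorphism $\D(\HH^i_{\fa}(M))/\fp\D(\HH^i_{\fa}(M))\simeq\D(\Hom_A(A/\fp,\HH^i_{\fa}(M)))\neq0$ to contradict $N=\fb N$ for every $\fb\supseteq\fa$; your (repaired) route avoids associated primes and Hom-evaluation at the cost of an explicit adic-topology argument.
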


\begin{proof}
Let $\fb$ be such that $\fb\supseteq \fa$.
We look at
$\D(\HH^i_{\fa}(M))\supseteq\fb\D(\HH^i_{\fa}(M))\supseteq\fa\D(\HH^i_{\fa}(M))=\D(\HH^i_{\fa}(M))$
to see $\D(\HH^i_{\fa}(M))=\fb\D(\HH^i_{\fa}(M))\quad(+)$.
Suppose on the contradiction that $\D(\HH^i_{\fa}(M))\neq0$. Then $\HH^i_{\fa}(M)\neq0$.
Let $\fp\in\Ass(\HH^i_{\fa}(M))$. Consequently,  $\Hom_A(A/\fp,\HH^i_{\fa}(M))\neq0$.
Taking Matlis duality, $\D(\Hom_A(A/\fp,\HH^i_{\fa}(M)))\neq0$.
Now, we use the Hom-evaluation homomorphism
$$
\begin{array}{ll}
\frac{\D(\HH^i_{\fa}(M))}{\fp\D(\HH^i_{\fa}(M))}
&\simeq A/\fp\otimes_A \Hom_A(\HH^i_{\fa}(M),\E_A(k))\\
&\simeq\Hom_A(\Hom_A(A/\fp,\HH^i_{\fa}(M)),\E_A(k)),
\end{array}$$
	to see $\D(\HH^i_{\fa}(M))\neq\fp\D(\HH^i_{\fa}(M))$. Recall that $\fp\supset \fa$.
	By looking at $(+)$ we find a contradiction.\footnote{Second proof: Matlis proved in \cite[Corollary 4.10]{mat} that $\D(\HH^i_{\fa}(M))$ is complete with respect to $\fa$-adic topology. By induction,
$\D(\HH^i_{\fa}(M))=\fa^n\D(\HH^i_{\fa}(M))$. Then $\D(\HH^i_{\fa}(M))\simeq\D(\HH^i_{\fa}(M))^{\wedge_{\fa}}:=\vpl\frac{\D(\HH^i_{\fa}(M))}{\fa^n\D(\HH^i_{\fa}(M))}=0.$}
\end{proof}

The \textit{cohomological dimension} of $M$
with respect to $\fa$   is  the supremum of $i$'s such that $\HH^i_{\fa}(M)\neq 0$.  We denoted it by  $\cd(\fa, M)$, and we may use $\cd(\fa)$ instead of $\cd(\fa, R)$.
 It follows easily from \cite[1.2.1]{Hel2}  that
$\dim(\D(\HH^{\cd(\fa,M)}_{\fa}(M)))\geq \cd(\fa,M)$. This deduced from Grothendieck's vanishing theorem (GVT). Conversely,  it implies (GVT).
As an immediate application, we give  a new proof of \cite[Remark 2.5]{Hel1}:

\begin{corollary}\label{h}
Let $A$ be any noetherian ring and $M$ be any module (not necessarily finitely generated). Suppose $0<c:=\cd(\fa, M)$. Then $\HH^{c}_{\fa}(M)$  is not finitely generated.
\end{corollary}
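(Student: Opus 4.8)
The plan is to reduce the statement to the inequality $\dim(\D(\HH^{c}_{\fa}(M))) \geq c$ noted just above, which follows from \cite[1.2.1]{Hel2} and Grothendieck's vanishing theorem, and then combine it with Observation \ref{nfg}. Concretely, set $c := \cd(\fa,M) > 0$. Since $c$ is the cohomological dimension, $\HH^c_{\fa}(M) \neq 0$, hence $\D(\HH^c_{\fa}(M)) \neq 0$ (Matlis duality is faithful). The cited fact gives $\dim(\D(\HH^c_{\fa}(M))) \geq c$.

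The main step is then to localize the hypothesis $c > 0$ so that Observation \ref{nfg} applies, i.e. to produce a prime $\fp \in \Supp(\D(\HH^c_{\fa}(M)))$ with $\fp \neq \fm$. Here a mild point of care: Corollary \ref{h} is stated for an arbitrary noetherian ring $A$, not necessarily local, whereas Observation \ref{nfg} is phrased for local rings. I would handle this by localizing at a maximal ideal. Pick a minimal prime $\fP$ of $\Supp(\D(\HH^c_{\fa}(M)))$ with $\dim(A/\fP) = \dim(\Supp(\D(\HH^c_{\fa}(M)))) \geq c > 0$; then $\fP$ is not maximal, so there is a maximal ideal $\fm \supsetneq \fP$, and $\fP \in \Supp(\D(\HH^c_{\fa}(M))_{\fm})$. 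Since local cohomology and Matlis duality are compatible with localization in the relevant sense — more simply, $\HH^c_{\fa}(M)$ finitely generated over $A$ forces $\HH^c_{\fa}(M)_{\fm}$ finitely generated over $A_{\fm}$ — it suffices to derive a contradiction over $A_{\fm}$.

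So suppose for contradiction that $\HH^c_{\fa}(M)$ is finitely generated over $A$. Then $\HH^c_{\fa}(M)_{\fm}$ is finitely generated over the local ring $(A_{\fm}, \fm A_{\fm})$, and $\Supp_{A_{\fm}}(\D_{A_{\fm}}(\HH^c_{\fa}(M)_{\fm}))$ contains $\fP A_{\fm} \neq \fm A_{\fm}$, so it strictly contains $\{\fm A_{\fm}\}$. By Observation \ref{nfg} (applied with $\fa A_{\fm}$ and the $A_{\fm}$-module $M_{\fm}$, using $\HH^c_{\fa}(M)_{\fm} \simeq \HH^c_{\fa A_{\fm}}(M_{\fm})$), the module $\HH^c_{\fa A_{\fm}}(M_{\fm})$ is not finitely generated over $A_{\fm}$ — contradicting what we just obtained. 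Hence $\HH^c_{\fa}(M)$ is not finitely generated.

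I expect the only real obstacle to be the bookkeeping around the non-local-to-local passage and making sure the support bound $\dim(\D(\HH^c_{\fa}(M))) \geq c$ from \cite[1.2.1]{Hel2} is invoked with the correct module (possibly one directly applies the displayed inequality before localizing and then localizes at a suitable maximal ideal in the closure of the witnessing prime). Everything else is a direct chaining of faithfulness of Matlis duality, the stated dimension inequality, and Observation \ref{nfg}; there are no delicate estimates. Alternatively, if one is content to state the corollary for local $A$ (as the intended application seems to be), the localization step is unnecessary and the proof is just: $c>0 \Rightarrow \HH^c_{\fa}(M)\neq 0 \Rightarrow \dim\D(\HH^c_{\fa}(M)) \geq c > 0 \Rightarrow \Supp\D(\HH^c_{\fa}(M)) \supsetneq \{\fm\} \Rightarrow$ not finitely generated, by Observation \ref{nfg}.
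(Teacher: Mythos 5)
Your argument is essentially the paper's own proof: reduce to the local case, invoke $\dim(\D(\HH^{c}_{\fa}(M)))\geq c>0$ from \cite[1.2.1]{Hel2} to see the support of the dual strictly contains $\{\fm\}$, and conclude by Observation \ref{nfg}. The only cosmetic caveat is that $\D$ is not defined before localizing when $A$ is not local, so the reduction should be phrased as in your final alternative (localize at a maximal ideal where $\HH^{c}_{\fa}(M)$ survives, then run the local argument), which is exactly what the paper's ``we may assume $A$ is local'' amounts to.
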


\begin{proof}
We may assume $A$ is local. Recall that $\dim(\D(\HH^{c}_{\fa}(M)))\geq c>0$. By Observation \ref{nfg},
$\HH^{c}_{\fa}(M)$  is not finitely generated.
\end{proof}

\begin{remark}Example 2.1 is not in the spot of cohomological dimension.
Is $\Supp(\D(\HH^{\cd(\fa)}_{\fa}(R)))=\Spec(R)$?
Let $(R,\fm)$ be  complete and local with minimal prime ideals of different heights. Such a thing exists. Then $\Supp(\D(\HH^{\cd(\fm)}_{\fm}(R)))\neq\Spec(R)$.
Indeed, let $K:=\D(\HH^{\dim R}_{\fm}(R))$. It is shown by Grothendieck that $\Ass_R (K) =\{\fp\in \Spec (R): \dim R/\fp=\dim R\}$.
In particular, $\Ass_R (K)\subsetneqq \min(R)$, and so $\Supp(\D(\HH^{\cd(\fm)}_{\fm}(R)))\neq\Spec(R)$.
\end{remark}

Lyubeznik and Yildirim remarked that  the regular assumption in Conjecture \ref{conj1} is important.

\begin{proposition}\label{cm}
Let $(R,\fm)$ be an  analytically irreducible  local ring such that for any ideal $I$ one has $\Supp  (\D(\HH^i_I(R))) = \Spec(R)$ provided  $\HH^i_I(R)\neq 0$. Then $R$ is Cohen-Macaulay.
\end{proposition}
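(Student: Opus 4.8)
The plan is to argue contrapositively: assuming $R$ is not Cohen--Macaulay, I will produce an ideal $I$ and an index $i$ with $\HH^i_I(R)\neq 0$ but $\Supp(\D(\HH^i_I(R)))\subsetneqq\Spec(R)$. The natural choice is $I=\fm$ and $i=\depth R=:t<d:=\dim R$. The first step is to recall that $\HH^t_{\fm}(R)\neq 0$ since $t=\depth R$ is exactly the least nonvanishing index of $\HH^\bullet_{\fm}(R)$. Then I set $K:=\D(\HH^t_{\fm}(R))$ and study $\Ass_R(K)$, or equivalently (via the standard Matlis-duality translation, as already used in the Remark preceding the proposition) the set of minimal primes of the support together with the attached primes of $\HH^t_{\fm}(R)$. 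The key point is that $\Supp(K)=\Spec(R)$ would force $\fm$ itself, and all primes, to lie in the support, but in fact $\Supp(K)\subseteq\V(\Ann_R\HH^t_{\fm}(R))$ and I want to show this containment is strict.

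The second and main step is to pin down $\Ann_R\HH^t_{\fm}(R)$ and show $\V$ of it is a proper subset of $\Spec(R)$. Here I would invoke the hypothesis that $R$ is analytically irreducible: completing, $\widehat R$ is a domain, and local cohomology commutes with completion, $\HH^t_{\fm}(R)\otimes_R\widehat R\simeq\HH^t_{\widehat\fm}(\widehat R)$, with $\D(-)$ also behaving well under completion. By local duality over the complete local domain $\widehat R$ (embed $\widehat R$ as a quotient of a complete regular local ring $S$, or use a dualizing complex), $\HH^t_{\widehat\fm}(\widehat R)$ is Matlis-dual to a module of the form $\Ext^{n-t}_S(\widehat R,S)$ (with $n=\dim S$), which is a finitely generated $\widehat R$-module; call it $C$. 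Then $\Supp(\D(\HH^t_{\fm}(R)))$ over $\widehat R$ is precisely $\Supp_{\widehat R}(C)=\V(\Ann_{\widehat R}C)$. The crucial arithmetic fact is that $\dim C\le \dim\widehat R - (\text{something positive})$ when $t<d$; more precisely, since $\widehat R$ is a domain that is not Cohen--Macaulay, the "deficiency module" $\Ext^{n-t}_S(\widehat R,S)$ with $t<d$ has dimension strictly less than $d$ — this is the classical fact that the non-Cohen--Macaulay locus is detected by the lower deficiency modules, and a domain has no embedded components so $\dim C<d$ means $C$ is not supported everywhere. Hence $\Supp(C)\neq\Spec(\widehat R)$, and pulling back, $\Supp(\D(\HH^t_{\fm}(R)))\neq\Spec(R)$, contradicting the hypothesis. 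Therefore $t=d$, i.e. $R$ is Cohen--Macaulay.

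I expect the main obstacle to be the bookkeeping in the second step: making rigorous the claim that for an analytically irreducible (hence, after completion, domain) local ring $R$ with $\depth R = t < d = \dim R$, the module $\Ext^{n-t}_S(\widehat R, S)$ has dimension $< d$ and therefore non-full support. One has to be careful that $t<d$ is genuinely needed — the top deficiency module $\Ext^{n-d}_S(\widehat R,S)$ (the canonical module) does have full-dimensional support, which is why the argument must use the depth index rather than an arbitrary index — and one must use that a domain has no embedded primes to conclude that dimension $<d$ implies the support misses the maximal ideal. An alternative, perhaps cleaner, route for this step is to use directly that $\Ass_{\widehat R}(\D\HH^t_{\widehat\fm}(\widehat R))=\{\fp:\depth \widehat R_\fp + \dim\widehat R/\fp = t\}$-type formula (a known description of attached primes of local cohomology, e.g. via the results already cited from Hellus), from which, since $\widehat R$ is a domain, $\fm\in\Ass$ would force $\depth\widehat R = t = \dim\widehat R$, again a contradiction; this avoids the deficiency-module dimension count entirely and may be the version worth writing up.
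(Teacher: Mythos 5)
Your proposal is correct and is essentially the paper's own argument: take $I=\fm$ and an index $i<\dim R$ with $\HH^i_{\fm}(R)\neq 0$, pass to the complete domain $\widehat{R}$, and show $\Supp(\D(\HH^i_{\fm}(R)))\subseteq\V(\Ann)\subsetneqq\Spec(R)$ via the dimension bound on the dual — your route through local duality and $\dim\Ext^{n-t}_S(\widehat{R},S)\leq t<d$ is exactly the content of Roberts' bound $\dim(R/\Ann\HH^i_{\fm}(R))\leq i$ that the paper cites. One small correction to your closing remarks: what the argument needs (and what $\dim C<d$, equivalently $\Ann C\neq 0$ over the domain $\widehat{R}$, delivers) is that the support misses the generic point $(0)$, not the maximal ideal, since $\fm$ always lies in the support of the nonzero finitely generated module $C$.
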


\begin{proof}
We may and do assume that $R$ is complete. Suppose on the contradiction that $R$ is not Cohen-Macaulay. There is an $i<\dim R$ such that
$\HH^i_{\fm}(R)\neq 0$.  This is proved  by Roberts that $\dim (R/\Ann\HH^i_{\fm}(R))\leq i$, see   \cite{rob}. It turns out that
$\Ann\HH^i_{\fm}(R)\neq 0$. We   deduce from this observation that $\Ann_R(\D(\HH^i_{\fm}(R)))\neq0$. Since $R$ is an integral domain,
$\V(\Ann_R(\D(\HH^i_{\fm}(R)))) \subsetneqq \Spec(R)$. In view of $\Supp(\D(\HH^i_{\fm}(R))) \subseteq \V(\Ann_R(\D(\HH^i_{\fm}(R)))) \subsetneqq \Spec(R)$ we find a contradiction that we search for it.
\end{proof}

\begin{fact} \label{top}(See \cite[Lemma 3.8]{Sch})
 Let $\{M_n\}$ be a decreasing family of
submodules of $M$. Suppose that the induced topology is equivalent with the $I$-adic topology on $M$. Then
${\vpl}_n\HH^{i}_{\fm}(\frac{M}{I^nM})={\vpl}_n\HH^{i}_{\fm}(\frac{M}{M_n})$.
\end{fact}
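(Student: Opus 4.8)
The statement to prove is Fact~\ref{top}: given a decreasing family $\{M_n\}$ of submodules of $M$ whose induced topology equals the $I$-adic topology, one has ${\vpl}_n\HH^{i}_{\fm}(M/I^nM)={\vpl}_n\HH^{i}_{\fm}(M/M_n)$. Since this is quoted as \cite[Lemma 3.8]{Sch}, I only need to reconstruct the standard argument. The hypothesis that the two topologies agree means that the filtrations $\{I^nM\}$ and $\{M_n\}$ are \emph{cofinal} in each other: for every $n$ there is $m(n)$ with $M_{m(n)}\subseteq I^nM$, and for every $n$ there is $k(n)$ with $I^{k(n)}M\subseteq M_n$. The plan is to pass these cofinality inclusions through the functor $\HH^i_{\fm}(M/-)$ and then through the inverse limit, using that cofinal subsystems compute the same inverse limit.

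First I would record that an inclusion of submodules $N'\subseteq N$ of $M$ induces a natural surjection $M/N'\twoheadrightarrow M/N$, hence a natural map $\HH^i_{\fm}(M/N')\to\HH^i_{\fm}(M/N)$; this makes $n\mapsto\HH^i_{\fm}(M/I^nM)$ and $n\mapsto\HH^i_{\fm}(M/M_n)$ into inverse systems, and the cofinality inclusions above give morphisms of inverse systems in both directions. Concretely, from $M_{m(n)}\subseteq I^nM$ we get maps $\HH^i_{\fm}(M/M_{m(n)})\to\HH^i_{\fm}(M/I^nM)$, and from $I^{k(n)}M\subseteq M_n$ we get maps $\HH^i_{\fm}(M/I^{k(n)}M)\to\HH^i_{\fm}(M/M_n)$. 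These are compatible with the transition maps of both systems (all maps in sight are the canonical quotient-induced maps, so commutativity is automatic), so after applying $\vpl$ they yield mutually inverse isomorphisms between ${\vpl}_n\HH^{i}_{\fm}(M/I^nM)$ and ${\vpl}_n\HH^{i}_{\fm}(M/M_n)$. The cleanest way to phrase this is: both inverse systems have a common cofinal subsystem — namely the one indexed by a sequence $n_1<n_2<\cdots$ interleaving the two filtrations so that $I^{n_{j+1}}M\subseteq M_{n_j}\subseteq I^{n_{j-1}}M$ — and an inverse limit over a directed index set is unchanged by passing to a cofinal subsystem.

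The only point requiring a little care is the equivalence of topologies hypothesis: I would spell out explicitly that ``the induced topology is equivalent to the $I$-adic topology'' is by definition exactly the statement that each $I^nM$ contains some $M_m$ and each $M_n$ contains some $I^kM$, i.e.\ the two cofinality statements above. Once that dictionary is in place, the rest is the formal cofinality lemma for inverse limits and there is no obstacle. I expect no genuinely hard step here; the main thing to get right is bookkeeping of the interleaving indices and checking that the quotient-induced maps fit together into honest morphisms of pro-systems, which is routine since every map involved is induced by a containment of submodules of the fixed module $M$.
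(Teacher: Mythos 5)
Your argument is correct and is essentially the standard proof of the cited result: the paper states this as a Fact with reference to \cite[Lemma 3.8]{Sch} and gives no proof of its own, and Schenzel's argument is exactly your cofinality reasoning (equivalence of topologies means the two filtrations are mutually cofinal, the inclusion-induced maps give morphisms of inverse systems in both directions, and cofinal subsystems have the same inverse limit). Nothing further is needed.
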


\begin{fact}(Formal duality, see e.g. \cite[Proposition 2.2.3]{O})\label{fd}
Let $A$ be a  Gorenstein local ring  of dimension $n$ and $p$ be the closed point. Let $X=\Spec(A)$ and $\mathfrak{X}$ denote the formal
completion of $X$ along  with $Y:=\V(I)$.  Then  $\HH^j_p(\mathfrak{X},\mathcal{O}_{\mathfrak{X}})\simeq\D(\HH^{n-j}_{I}(A))\simeq \vpl_{\ell}\HH^{ j}_{\fm}(A/I^{\ell})$ for any $j$.
\end{fact}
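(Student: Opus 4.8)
My plan is to reduce at once to the case that $A$ is complete, and then to prove the two displayed isomorphisms separately: the right-hand one, $\vpl_\ell\HH^j_\fm(A/I^\ell)\simeq\D(\HH^{n-j}_I(A))$, by Grothendieck's local duality, and the left-hand one, $\HH^j_p(\mathfrak X,\mathcal O_{\mathfrak X})\simeq\vpl_\ell\HH^j_\fm(A/I^\ell)$, by pushing cohomology with supports through the inverse limit that builds the formal scheme. For the reduction I would observe that all three modules are insensitive to replacing $A$ by $\widehat A$: each $\HH^j_\fm(A/I^\ell)$ is Artinian, hence already an $\widehat A$-module and canonically $\HH^j_{\widehat\fm}(\widehat A/I^\ell\widehat A)$; flat base change gives $\HH^{n-j}_I(A)\otimes_A\widehat A\simeq\HH^{n-j}_{I\widehat A}(\widehat A)$, and tensor--hom adjunction together with $\E_A(k)=\E_{\widehat A}(k)$ turns $\D(\HH^{n-j}_I(A))$ into $\D(\HH^{n-j}_{I\widehat A}(\widehat A))$; and $\mathfrak X$ is equally the completion of $\Spec\widehat A$ along $\V(I\widehat A)$. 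I would also dispose of the trivial case $I=A$ (all three modules vanish), so that $p=\fm\in Y=\V(I)$.

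For the algebraic isomorphism I would use local duality in the form $\HH^j_\fm(N)\simeq\D(\Ext^{n-j}_A(N,A))$ for finitely generated $N$, which is available because $A$ is complete Gorenstein of dimension $n$. Applied to $N=A/I^\ell$, and natural in $\ell$ with respect to the surjections $A/I^{\ell+1}\twoheadrightarrow A/I^\ell$, this yields an isomorphism of inverse systems; passing to $\vpl_\ell$ gives
\[
\vpl_\ell\HH^j_\fm(A/I^\ell)\ \simeq\ \vpl_\ell\D\bigl(\Ext^{n-j}_A(A/I^\ell,A)\bigr)\ \simeq\ \D\bigl(\vil_\ell\Ext^{n-j}_A(A/I^\ell,A)\bigr)\ =\ \D\bigl(\HH^{n-j}_I(A)\bigr),
\]
where the middle step is the exactness of $\D(-)=\Hom_A(-,\E_A(k))$ and its taking of direct limits to inverse limits, and the last equality is the definition of local cohomology (the powers $I^\ell$ being cofinal; should a different cofinal system of submodules be more convenient, Fact \ref{top} shows this is harmless).

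For the geometric isomorphism I would put $X_\ell:=\Spec(A/I^\ell)$, so that $\mathfrak X$ has underlying space $|Y|$ and structure sheaf $\mathcal O_{\mathfrak X}=\vpl_\ell\mathcal O_{X_\ell}$ with surjective transition maps, and $\HH^j_p(X_\ell,\mathcal O_{X_\ell})=\HH^j_\fm(A/I^\ell)$. Since those transition maps are surjective, the derived limit $R\vpl_\ell$ collapses to $\vpl_\ell$ on structure sheaves, and combining this with cohomology with supports in the closed subset $\{p\}\subseteq|\mathfrak X|$ one gets the Milnor exact sequence
\[
0\ \lo\ \vpl^{1}_{\ell}\HH^{j-1}_p(X_\ell,\mathcal O_{X_\ell})\ \lo\ \HH^j_p(\mathfrak X,\mathcal O_{\mathfrak X})\ \lo\ \vpl_\ell\HH^j_p(X_\ell,\mathcal O_{X_\ell})\ \lo\ 0.
\]
Because each $\HH^j_p(X_\ell,\mathcal O_{X_\ell})=\HH^j_\fm(A/I^\ell)$ is Artinian, the images of the transition maps stabilize, the system is Mittag--Leffler, its $\vpl^1$ vanishes, and hence $\HH^j_p(\mathfrak X,\mathcal O_{\mathfrak X})\simeq\vpl_\ell\HH^j_\fm(A/I^\ell)$; chaining this with the previous paragraph completes the argument.

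I expect the geometric step to be the main obstacle: one must verify carefully that forming cohomology with supports in the closed point commutes, up to the $\vpl^1$-term, with the inverse limit defining $\mathcal O_{\mathfrak X}$ --- that is, the compatibility $R\Gamma_{\{p\}}\circ R\vpl\simeq R\vpl\circ R\Gamma_{\{p\}}$ on the noetherian formal scheme $\mathfrak X$ --- and that the resulting exact sequence of $A$-modules is indeed the Milnor sequence displayed above. This is exactly the formal-scheme input for which \cite[Proposition 2.2.3]{O} and its surrounding material are cited; the algebraic step, by contrast, is a routine consequence of local duality and the exactness of Matlis duality.
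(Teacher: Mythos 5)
The paper does not actually prove this statement: it records it as a Fact quoted from Ogus \cite[Proposition 2.2.3]{O}, so there is no internal proof to compare against; your outline is essentially the standard argument behind that citation, and it is sound. The algebraic half is indeed routine: over the (completed) Gorenstein ring, local duality gives $\HH^j_{\fm}(A/I^{\ell})\simeq\D(\Ext^{n-j}_A(A/I^{\ell},A))$ compatibly in $\ell$, and since $\Hom_A(-,\E_A(k))$ turns the direct limit defining $\HH^{n-j}_I(A)$ into an inverse limit, one gets $\vpl_{\ell}\HH^j_{\fm}(A/I^{\ell})\simeq\D(\HH^{n-j}_I(A))$. The geometric half is correct in outline as well: with $X_\ell=\Spec(A/I^{\ell})$ the transition maps of $\{\mathcal{O}_{X_\ell}\}$ are surjective on sections over the affine basis of $|Y|$ (they are $(A/I^{\ell+1})_f\twoheadrightarrow(A/I^{\ell})_f$), so the sheaf system is Mittag--Leffler and $\mathcal{O}_{\mathfrak{X}}$ agrees with the derived limit; $R\Gamma_p(\mathfrak{X},-)$ commutes with derived limits (it sits in a triangle with $R\Gamma(\mathfrak{X},-)$ and $R\Gamma(\mathfrak{X}\setminus\{p\},-)$, both of which do), which produces your Milnor sequence; and $\vpl^1$ vanishes because an inverse system of artinian modules is automatically Mittag--Leffler. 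This commutation is precisely the formal-scheme input for which the paper, like you, defers to \cite{O}, and you correctly identify it as the only nontrivial point.

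Two small corrections. First, your reduction step asserts that $\mathfrak{X}$ ``is equally'' the completion of $\Spec(\widehat{A})$ along $\V(I\widehat{A})$; this is not literally true, since $\vpl_{\ell}A/I^{\ell}$ and $\vpl_{\ell}\widehat{A}/I^{\ell}\widehat{A}$ (and the underlying spaces $\V(I)$, $\V(I\widehat{A})$) differ in general. The slip is harmless: the geometric step needs no completeness at all, and $\HH^j_{\fm}(A/I^{\ell})\simeq\HH^j_{\widehat{\fm}}(\widehat{A}/I^{\ell}\widehat{A})$ because these modules are artinian, so it is cleaner to invoke the completion only in the local-duality step (where, in fact, the form $\HH^j_{\fm}(N)\simeq\D(\Ext^{n-j}_A(N,A))$ already holds for $N$ finitely generated over the non-complete Gorenstein ring, since both sides are unchanged by passing to $\widehat{A}$). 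Second, exactness of $\D(-)$ is not what converts the direct limit into an inverse limit; that is the adjunction isomorphism $\Hom_A(\vil_\ell M_\ell,\E_A(k))\simeq\vpl_\ell\Hom_A(M_\ell,\E_A(k))$, which holds with no exactness hypothesis. Neither point affects the validity of the argument.
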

We close this section by presenting some motivational examples.  The following  extends \cite[Example 5.2]{Sch} and also a computation by Hellus.
 Proposition \ref{mo} (see below) drops the restriction on $|\min(I)|$.
\begin{example}
Let  $R:=k[[x_1,\ldots,x_d]]$ and  $I$ be a monomial ideal such that $\HH^{d-i}_I(R)\neq0$ and $|\min(I)|<3$. Then $\Supp(\D(\HH^{d-i}_I(R)))=\Spec(R)$.
\end{example}

\begin{proof}
Since $\D(\HH^{d-i}_{\sqrt{I}}(R))=\D(\HH^{d-i}_I(R))$ we may assume that $I$ is radical. First, assume that  $|\min(I)|=1$.
So that it is irreducible.
A monomial ideal is irreducible if and only if it is generated by pure powers of the variables. In particular it is complete-intersection.
In this case, the desired claim is due Hellus, see \cite[Lemma 2.1]{Hel2}.
The primary decomposition of $I$ is of the form
$I=(x_{i_1},\ldots,x_{i_t}) \cap(x_{j_1},\ldots,x_{i_s})=\fp\cap\fq$.  Set $\fp_n:=(x^n:x\in \fp)$.  By Mayer--Vietoris, $$\ldots\lo\HH^{i-1}_{\fm}(\frac{R}{\fp_n+\fq_n}) \lo \HH^{i}_{\fm}(\frac{R}{\fp_n\cap\fq_n})\lo\HH^{i}_{\fm}
(\frac{R}{\fp_n})\oplus\HH^{i}_{\fm}(\frac{R}{\fq_n})\lo\HH^{i}_{\fm}(\frac{R}{\fp_n+\fq_n})\lo\ldots$$
Note that $\frac{R}{\fp_n},\frac{R}{\fq_n}$, and $\frac{R}{\fp_n+\fq_n}$ are Cohen-Macaulay.
Also, $\dim\frac{R}{\fq_n}\neq\dim\frac{R}{\fp_n+\fq_n}\neq\dim\frac{R}{\fp_n} \quad(\dagger)$.
There are two possibilities: a) $\HH^{i}_{\fm}(\frac{R}{\fq_n})\neq0$ or $\HH^{i}_{\fm}(\frac{R}{\fq_n})\neq0$;
b) $\HH^{i}_{\fm}(\frac{R}{\fq_n})=\HH^{i}_{\fm}(\frac{R}{\fq_n})=0$.

a): The assumption  implies that $\dim(\frac{R}{\fq_n}) =i$ or $\dim(\frac{R}{\fq_n})=i$. Hence, $\dim(\frac{R}{\fp_n+\fq_n})\neq i$. Since   $\frac{R}{\fp_n+\fq_n}$ is Cohen-Macaulay, $\HH^{i}_{\fm}(\frac{R}{\fp_n+\fq_n})=0$.
Combine this along with the exact sequence to deduce that  the sequence $\HH^{i}_{\fm}(\frac{R}{\fp_n\cap\fq_n})\to\HH^{i}_{\fm}
(\frac{R}{\fp_n})\oplus\HH^{i}_{\fm}(\frac{R}{\fq_n})\to 0$ is exact. It yields the following exact  sequence $${\vpl}_n\HH^{i}_{\fm}(\frac{R}{(\fp\cap\fq)^n})\stackrel{\ref{top}}\simeq{\vpl}_n\HH^{i}_{\fm}(\frac{R}{\fp_n\cap\fq_n})\lo{\vpl}_n\HH^{i}_{\fm}
(\frac{R}{\fp_n})\oplus{\vpl}_n\HH^{i}_{\fm}(\frac{R}{\fq_n})\lo 0.$$ By \cite[Lemma 2.1]{Hel2},
 either $\Supp(\D(\HH^{i}_{\fm}(\frac{R}{\fq_n})))=\Spec(R)$ or $\Supp(\D(\HH^{i}_{\fm}(\frac{R}{\fp_n})))=\Spec(R)$.
So, $$\Supp({\vpl}_n\HH^{i}_{\fm}(\frac{R}{(\fp\cap\fq)^n})) \supset\Supp({\vpl}_n\HH^{i}_{\fm}
(\frac{R}{\fp_n}))\cup\Supp({\vpl}_n\HH^{i}_{\fm}(\frac{R}{\fq_n}))=\Spec(R).$$From this we get that $\Supp({\vpl}_n\HH^{i}_{\fm}(\frac{R}{(\fp\cap\fq)^n}))=\Spec(R)$.
It remains to recall   that $ {\vpl}_n\HH^{i}_{\fm}(\frac{R}{(\fp\cap\fq)^n}))=\D(\HH^{d-i}_I(R))$.

b): Since $\HH^{i}_{\fm}(\frac{R}{\fq_n})=\HH^{i}_{\fm}(\frac{R}{\fq_n})=0$, we get that$${\vpl}_n\HH^{i-1}_{\fm}
(\frac{R}{\fp_n})\oplus{\vpl}_n\HH^{i-1}_{\fm}(\frac{R}{\fq_n})\lo{\vpl}_n\HH^{i-1}_{\fm}(\frac{R}{\fp_n+\fq_n}) \lo {\vpl}_n\HH^{i}_{\fm}(\frac{R}{\fp_n\cap\fq_n})\lo 0.$$
Since
$\HH^{d-i}_I(R)\neq0$, we have $0\neq\D(\HH^{d-i}_I(R))\simeq\D(\HH^{d-i}_{I}(\frac{R}{\fp_n\cap\fq_n}))$, because $\rad (\fp_n\cap\fq_n)=\rad (\fp\cap \fq)=I$. This implies that  ${\vpl}_n\HH^{i}_{\fm}(\frac{R}{\fp_n\cap\fq_n})\neq0$. From the displayed exact sequence,  ${\vpl}_n\HH^{i-1}_{\fm}(\frac{R}{\fp_n+\fq_n})\neq 0$. We call this property by $(\ast)$. Keep in mind that $\fp_n+\fq_n$  is generated by powers of variables. Suppose first that ${\vpl}_n\HH^{i-1}_{\fm}
(\frac{R}{\fp_n})={\vpl}_n\HH^{i-1}_{\fm}
(\frac{R}{\fp_n})= 0$. It follows that ${\vpl}_n\HH^{i-1}_{\fm}(\frac{R}{\fp_n+\fq_n})\simeq {\vpl}_n\HH^{i}_{\fm}(\frac{R}{\fp_n\cap\fq_n})$. From this, we get the claim, because $\Supp({\vpl}_n\HH^{i-1}_{\fm}(\frac{R}{\fp_n+\fq_n}))=\Spec(R)$. Second, we may assume that  ${\vpl}_n\HH^{i-1}_{\fm}
(\frac{R}{\fp_n})\oplus{\vpl}_n\HH^{i-1}_{\fm}(\frac{R}{\fq_n})\neq 0$. Without loss of the generality we assume that
$\HH^{i-1}_{\fm}
(\frac{R}{\fp_n})\neq 0$. Recall that $\frac{R}{\fp_n}$ is Cohen-Macaulay. This implies that $\dim(\frac{R}{\fp_n})=i-1$. We get from $(\ast)$ that $\dim (\frac{R}{\fp_n+\fq_n})=i-1$, because $\frac{R}{\fp_n+\fq_n}$ is Cohen-Macaulay.  To get  a contradiction
it remains to apply $(\dagger)$.
\end{proof}

 \begin{example}
Let  $R:=k[[x_1,\ldots,x_6]]$  and $I := (x_1, x_2)\cap (x_3, x_4)\cap(x_5, x_6)$. Then $\Supp(\D (\HH^{3}_I(R)))=\Spec(R)$.
\end{example}

\begin{proof}
It turns out from Mayer--Vietoris that
$\HH^{3}_I(R)\twoheadrightarrow\HH^4_{(x_1,x_2,x_5,x_6)}(R)\oplus\HH^4_{(x_3,x_4,x_5,x_6)}(R)\to 0$   is exact (see the third item in \cite[Example 2.11]{Hel1}).
Thus, $\D (\HH^4_{(x_1,x_2,x_5,x_6)}(R))\oplus\D (\HH^4_{(x_3,x_4,x_5,x_6)}(R))\hookrightarrow\D (\HH^{3}_I(R)).$
 By the above example, $\Supp(\D (\HH^{4}_{(x_1,x_2,x_5,x_6)}(R)))=\Spec(R).$ In view of   $\Spec(R)\subseteq\Supp(\D (\HH^4_{(x_1,x_2,x_5,x_6)}(R)))\subseteq\Supp(\D (\HH^{3}_I(R)))\subseteq\Spec(R),$ we see $\Supp(\D (\HH^{3}_I(R)))=\Spec(R)$.
\end{proof}

\section{computing support of  formal cohomology}

\begin{observation}\label{cdmodule}We observed that the module version of Conjecture 1.1 is not true.
Let $R$ be an integral domain such that   $\Supp_R  (\D(\HH^{\cd(I)}_I(R))) = \Spec(R)$. If $M$ is  finitely generated and torsion-free,
 then  $\Supp (\D(\HH^{\cd(I,M)}_I(M)))=\Spec(R).$ Indeed, one has $\Supp_R  (M) = \Spec(R)$. This implies that  $\cd(I,R)=\cd(I,M)$, see e.g. \cite[Lemma 2.1]{Sch}. Since $\HH^{\cd(I,R)}_I(-)$ is right exact,  $
\D(\HH^{\cd(I)}_I(M))\simeq\D(\HH^{\cd(I)}_I(R)\otimes_RM)\simeq \Hom_R(M,\Hom_R(\HH^{\cd(I)}_I(R),\E_R(k)))$.
Also, $
\Ass(\Hom(M,\D(\HH^{\cd(I)}_I(R))))
=\Supp(M)\cap \Ass(\D(\HH^{\cd(I)}_I(R)))=\Ass(\D(\HH^{\cd(I)}_I(R)))
$. This yields
$0\in\Ass \D(\HH^{\cd(I)}_I(M))$. So, $\Supp(\D(\HH^{\cd(I)}_I(M))) = \Spec(R).$
\end{observation}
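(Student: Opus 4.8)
The plan is to reduce everything to a statement about associated primes of a Matlis dual, since the support of a Matlis dual is closed under generization only in subtle ways, but $0 \in \Ass$ immediately gives full support over a domain. First I would observe that because $M$ is finitely generated and torsion-free over the domain $R$, we have $\Supp_R(M) = \Spec(R)$: indeed $M$ embeds in a free module $R^n$ (or localize at $0$ to see $M_{(0)} \neq 0$), so $M_{\fp} \neq 0$ for every $\fp$. Then, invoking \cite[Lemma 2.1]{Sch}, the equality $\Supp_R(M) = \Spec(R)$ forces $\cd(I, M) = \cd(I, R) =: c$; this is the only external input beyond standard Matlis/Hom-tensor formalism.

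Next I would compute $\D(\HH^c_I(M))$. Since $c = \cd(I, R)$ is the top nonvanishing degree, the functor $\HH^c_I(-)$ is right exact, and a presentation $R^a \to R^b \to M \to 0$ together with right exactness gives $\HH^c_I(M) \simeq \HH^c_I(R) \otimes_R M$. Applying $\D(-) = \Hom_R(-, \E_R(k))$ and Hom–tensor adjunction yields
\[
\D(\HH^c_I(M)) \simeq \Hom_R\big(\HH^c_I(R) \otimes_R M,\ \E_R(k)\big) \simeq \Hom_R\big(M,\ \D(\HH^c_I(R))\big).
\]
Now I would analyze $\Ass$ of the right-hand side. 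For any finitely generated $M$ and any module $N$ there is the standard formula $\Ass_R \Hom_R(M, N) = \Supp_R(M) \cap \Ass_R(N)$ (this holds over a noetherian ring). With $N = \D(\HH^c_I(R))$ and $\Supp_R(M) = \Spec(R)$, this gives $\Ass_R \D(\HH^c_I(M)) = \Ass_R \D(\HH^c_I(R))$.

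Finally, the hypothesis $\Supp_R(\D(\HH^c_I(R))) = \Spec(R)$ together with $R$ a domain is what I would leverage: it is standard (and used elsewhere in the paper) that for a module $N$ over a domain with $\Supp(N) = \Spec(R)$ one has $0 \in \Ass_R(N)$ — concretely, $N_{(0)} \neq 0$ is a vector space over the fraction field on which $(0)$ acts as the annihilator of any nonzero element, or one cites that the Matlis dual of a nonzero module over a domain localized at $(0)$ detects the generic point. Hence $0 \in \Ass_R \D(\HH^c_I(R)) = \Ass_R \D(\HH^c_I(M))$, and therefore $\Spec(R) = \{(0)\}^{-} \subseteq \Supp_R \D(\HH^c_I(M)) \subseteq \Spec(R)$, giving equality. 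The main obstacle, such as it is, is purely bookkeeping: making sure the Hom–tensor identification is genuinely valid in the non-finitely-generated setting (it is, since $M$ is finitely generated so the adjunction and the $\Ass \Hom$ formula apply with no restriction on $\D(\HH^c_I(R))$), and confirming that $\cd(I,M) = \cd(I,R)$ so that we are comparing the correct cohomological degrees — both of which are handled by the cited lemmas of Schenzel.
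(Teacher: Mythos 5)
Your proposal is correct and follows essentially the same route as the paper: $\Supp(M)=\Spec(R)$ forces $\cd(I,M)=\cd(I,R)$, right exactness of the top local cohomology functor plus Hom--tensor adjunction gives $\D(\HH^{\cd(I)}_I(M))\simeq \Hom_R(M,\D(\HH^{\cd(I)}_I(R)))$, and the formula $\Ass\Hom_R(M,N)=\Supp(M)\cap\Ass(N)$ together with $0\in\Ass(\D(\HH^{\cd(I)}_I(R)))$ yields full support. Your added justifications (the generic-point argument for $0\in\Ass$ and the validity of the $\Ass$--$\Hom$ formula for a non-finitely-generated second argument) are correct fillings of details the paper leaves implicit.
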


By $(HLVT)$ we mean Hartshorne-Lichtenbaum Vanishing Theorem.

\begin{proposition}\label{3}
 Conjecture \ref{conj1} is true if $\dim R<4$.
 \end{proposition}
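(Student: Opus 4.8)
The plan is to prove Conjecture \ref{conj1} for $\dim R \leq 3$ by checking each case of $(i, I)$ with $\HH^i_I(R) \neq 0$ directly, using the low-dimensional structure to severely restrict the possibilities. Write $d := \dim R$. The case $d \leq 1$ is trivial: the only ideals $I$ with $\HH^i_I(R) \neq 0$ for some $i$ are (after passing to the radical) either the maximal ideal $\fm$ with $i = d$, or the zero ideal with $i = 0$ (where $\HH^0_0(R) = R$ and $\D(R)$ is faithful), and in each case $\Supp(\D(\HH^i_I(R)))$ is easily seen to be all of $\Spec(R)$. So the content is in $d = 2$ and $d = 3$.

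**First I would** reduce to the case where $I$ is radical (since $\HH^i_I = \HH^i_{\sqrt I}$) and to the case where $R$ is complete (support is insensitive to completion for these purposes, as $\widehat R$ is faithfully flat and Matlis duality is compatible with completion). Then for each $i$ I would use the key input $\dim(\D(\HH^{\cd(I)}_I(R))) \geq \cd(I)$ from the discussion after Observation \ref{sch}, together with the top-local-cohomology structure theory and $(HLVT)$. For the \emph{top} nonvanishing degree $c := \cd(I, R)$: since $R$ is regular (hence a domain) and complete, I want to show $0 \in \Ass(\D(\HH^c_I(R)))$, equivalently $\Hom_R(R/0, \HH^c_I(R)) = \HH^c_I(R) \neq 0$ forces — via the Hom-evaluation argument of Observation \ref{sch} — that $\D(\HH^c_I(R)) \neq \fp\D(\HH^c_I(R))$ for $\fp = 0$... more carefully, I would invoke that $\HH^c_I(R)$ has a nonzero element killed only by the zero ideal in a suitable sense. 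The cleanest route: by $(HLVT)$, $\cd(I) = d$ iff $\dim R/I = 0$, i.e. $\sqrt I = \fm$; then $\HH^d_\fm(R) = \E_R(k)$ up to a shift, so $\D(\HH^d_\fm(R)) \simeq \widehat R$, which is faithful, giving $\Supp = \Spec(R)$. For $c = d - 1$: here $\dim R/I = 1$, and one uses the known description of $\Ass_R(\HH^{d-1}_I(R))$ / the fact that $\D(\HH^{d-1}_I(R))$ has $0$ among its associated primes when $R$ is a complete regular domain (this is where the second-Vanishing-type results and the dimension bound $\dim \D(\HH^{d-1}_I(R)) \geq d-1$ combine: the support has dimension $\geq d-1$, and to promote $d-1$ to $d$ in a domain I would use that the support is closed under... hmm, that needs an extra argument).

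**The remaining and most delicate case** is the intermediate one in dimension $3$: $d = 3$, $I$ of height $1$ with $\HH^2_I(R) \neq 0$ (so $\cd(I) = 2$, $\dim R/I = 1$ or $2$), where $i = 2$ may or may not be the top degree, and also $i = 1 = \Ht(I)$ when $\dim R/I = 2$. Here I would argue as follows: $\HH^1_I(R) = 0$ if $I$ is a height-one prime in the UFD $R$ (it's a divisor, locally principal, and $R$ regular local is a UFD so actually $I$ is principal — then $\HH^1_I(R) = R_f/R$, whose Matlis dual one computes to have full support); more generally for height-one radical $I$ with several minimal primes one uses Mayer--Vietoris as in Example 2.7 to reduce to the principal/CI pieces where $(HLVT)$ and Hellus's \cite[Lemma 2.1]{Hel2} (CI case) apply, then transports full support along the Mayer--Vietoris surjections/injections exactly as in Examples 2.7 and 2.9. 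For $i = 2 = \cd(I)$ with $\dim R/I \in \{1,2\}$, I combine $\dim \D(\HH^2_I(R)) \geq 2$ with the domain hypothesis: if the support had dimension exactly $2$ it would be $\V(\fp)$ for a height-one prime $\fp$, but then $\fp \D(\HH^2_I(R)) = 0$ would contradict $\D(\HH^2_I(R)) = \fa\D(\HH^2_I(R))$-type faithfulness (Observation \ref{sch}) unless $\fp \subseteq \sqrt{\Ann}$... this is precisely where I expect the real obstacle, and I would resolve it by showing $\Ann_R(\D(\HH^2_I(R))) = 0$ directly: pick $\fp \in \Ass(\HH^2_I(R))$, note $\HH^2_I(R)$ is $I$-torsion so $\fp \supseteq I$ has height $\geq 1$; the Hom-evaluation isomorphism $\D(\HH^2_I(R))/\fp\D(\HH^2_I(R)) \simeq \D(\Hom(R/\fp, \HH^2_I(R))) \neq 0$ shows each such $\fp$ is a witness, and combined with $0 \in \Ass$ arguments (completing the ring, using that $R/\fp$ for $\fp$ in the support must exhaust $\Spec$) finishes it. \textbf{The hard part will be} pinning down that $0 \in \Ass_R(\D(\HH^i_I(R)))$ in the borderline subcases $(d,i) \in \{(2,1),(3,1),(3,2)\}$ where $i$ is not the cohomological dimension — for those I anticipate needing an explicit Mayer--Vietoris / Čech reduction to complete-intersection or principal ideals (invoking Hellus \cite[Lemma 2.1]{Hel2} and $(HLVT)$ as black boxes) rather than a uniform dimension-counting argument, much in the spirit of Examples 2.7 and 2.9 already in the paper.
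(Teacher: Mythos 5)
Your overall landscape (reduce to radical, split by height/cohomological dimension, use $(HLVT)$, Mayer--Vietoris, and Hellus-type results for principal/CI ideals as black boxes) matches the paper's, but the two subcases you yourself flag as ``the hard part'' are exactly where the proof is missing, and your proposed bridges do not close them. The decisive gap is the case $\dim R=3$, $\Ht(I)=2$ (so $\dim R/I=1$ and $i=2=\cd(I)$): a height-two radical ideal there is in general \emph{not} complete intersection and not principal (think of the ideal of a monomial space curve), so neither Hellus \cite[Lemma 2.1]{Hel2} nor a Mayer--Vietoris reduction to principal pieces applies, and the bound $\dim\D(\HH^{2}_I(R))\geq 2$ only gives a two-dimensional support, not $0\in\Supp$. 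The paper closes this case with a specific input you do not have: Hellus--St\"uckrad \cite[Theorem 2.4]{Hel3}, which says that for $J$ of dimension one over an analytically irreducible local ring $A$ with $\HH^{\dim A}_J(A)=0$ one has $\Assh(\D(\HH^{\dim A-1}_J(A)))=\Assh(A)$, whence $0\in\Ass$ and full support (the paper's Fact A). Similarly, for the mixed case $\Ht(I)=1$, $i=2$, the paper's concrete step is the Mayer--Vietoris surjection $\HH^2_I(R)\twoheadrightarrow\HH^3_{J+K}(R)=\HH^3_{\fm}(R)$ (using $\Ht(J+K)\geq 3$ and $(HLVT)$ to kill $\HH^3_J\oplus\HH^3_K$), which dualizes to $\widehat{R}\hookrightarrow\D(\HH^2_I(R))$; you anticipate ``an explicit MV reduction'' but never produce it. For $\HH^1_I(R)$ the paper also reduces to the unmixed case via MV and then uses that a height-one unmixed ideal in a regular (hence UFD) local ring is principal, together with \cite[Theorem 1.3]{Hel3}; your sketch of that part is essentially right.

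Beyond the missing cases, two of your fallback arguments are unsound for non-finitely-generated modules, so they cannot be used to patch the gap. First, the support of $\D(\HH^i_I(R))$ need not be closed, so ``support of dimension $2$ would be $\V(\fp)$ for a height-one prime'' is unjustified. Second, proving $\Ann_R(\D(\HH^i_I(R)))=0$ does not yield $\Supp=\Spec(R)$: the paper stresses that $\Supp(M)\subseteq\V(\Ann M)$ can be strict for non-finitely-generated $M$, and when it does need to pass from zero annihilator to full support (Proposition \ref{simon}) it invokes Simon's Baire-category argument, which requires a countable ring and adic completeness --- hypotheses not available for a general regular local ring of dimension $3$ containing a field. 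So the proposal is a reasonable plan in the paper's spirit, but as written it has a genuine gap at the height-two (and, less seriously, the mixed-height) case.
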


\begin{proof}
We deal with the case $\dim R=3$.
We may assume $I$ is radical.
First we deal with the case  $\Ht(I)=1$ and that $I$ is height unmixed. It turns out that $I=(x)$ for some $x$. It is proved in \cite[Thorem 1.3]{Hel3}  that $\Ass(\D(\HH^1_{(x)}(R)))=\Spec(R)\setminus\V(x)$, so that $0\in \Ass(\D(\HH^1_I(R)))$. Consequently,
$\Supp(\D(\HH^1_I(R)))=\Spec(R)$. Now, we assume $I$ is any height one ideal.
We are going to reduced to the unmixed case. Write $I=J\cap K$  where $J$  is
 unmixed, $\Ht (J) = 1$, $\Ht (K) \geq 2$, and $\Ht(J + K) \geq 3$. In the light of  Mayer-Vietoris
there is an exact sequence $\HH^i_{J + K}(R)\to\HH^i_{J}(R) \oplus\HH^i_{K}(R)\to \HH^i_{J \cap K}(R)\to\HH^{i+1}_{J + K}(R).$
Note that $\HH^1_{J + K}(R)=\HH^1_{K}(R)=\HH^{2}_{J + K}(R)=0$. From this,
$\D(\HH^1_{I}(R))\simeq\D(\HH^1_{J}(R))$ and by the height unmixed case $\Supp(\D(\HH^1_I(R)))=\Spec(R)$.
After taking Matlis duality from
$ \HH^2_{I}(R)\to\HH^{3}_{\fm}(R)\to 0$ we have $\widehat{R}\simeq\D(\HH^{3}_{\fm}(R))\subset\D(\HH^2_{I}(R))$. This implies that
$\Supp(\D(\HH^2_I(R)))=\Spec(R)$.
Now, we deal with the case  $\Ht(I)=2$. By $(HLVT)$, $\cd(I)<3$.  Recall that $2=\Ht(I)\leq\cd(I)<3$. From this we deduce that
$\HH^1_I(R)=\HH^3_I(R)=0$. \begin{enumerate}
\item[Fact] A): Let $A$ be an analytically irreducible local ring of dimension $d$ and $J$ be of dimension one.
 Then $\Supp_A (\D(\HH^{d-1}_J(A))) = \Spec(R)$. Indeed, since $\widehat{A}$  is domain and  due to $(HLVT)$ we have $\HH^d_J(A)=0$.
It is proved in \cite[Thorem 2.4]{Hel3}   that if $J$ is $1$-dimensional and that $\HH^d_J(A)=0$, then
$\Assh(\D(\HH^{d-1}_J(A)))=\Assh(A)$. Apply this, we get that
$\Supp(\D(\HH^{d-1}_J(A)))=\Spec(A)$.
\end{enumerate}
In the light of this fact we see $\Supp(\D(\HH^{2}_I(R)))=\Spec(R)$.
Finally,  we assume $\Ht(I)=3$. We have  $\HH^i_{I}(R)=0$ for all $i<\depth (R)=3$ and  $\Supp(\D(\HH^3_{I}(R)))=\Spec(R)$ because $\HH^3_{I}(R)=\D(\E_R(R/\fm))=\widehat{R}$.
\end{proof}

\begin{lemma}\label{CD1}
Let $R$ be a regular local ring containing a field  and $I$ be an unmixed ideal such that $\cd(I)=\dim R-1$.   Then $\Supp  (\D(\HH^{\cd(I) }_I(R))) = \Spec(R)$.
\end{lemma}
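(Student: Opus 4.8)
The plan is to reduce to two facts already in the excerpt: Fact A from the proof of Proposition~\ref{3} (the case of a one-dimensional prime over an analytically irreducible ring) and the isomorphism $\HH^{\dim R}_\fm(R)\simeq\E_R(k)$. Write $d=\dim R$ and $h=\Ht(I)$. Since $R$ is regular, hence Cohen--Macaulay, $h=\grade(I,R)\le\cd(I)=d-1$, so $\dim(R/I)=d-h\ge1$ and therefore $\HH^d_I(R)=0$ by $(HLVT)$. Because $\HH^\bullet_I(R)=\HH^\bullet_{\sqrt I}(R)$ and the radical of an unmixed ideal of pure height $h$ is again unmixed of pure height $h$, I would replace $I$ by $\sqrt I$ and write $I=\fp_1\cap\cdots\cap\fp_r$ with every $\fp_j$ of height $h$. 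I would then split according to whether the punctured spectrum $\Spec(R/I)\setminus\{\fm\}$ is connected.

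If it is connected, I first observe that $d-h=1$: otherwise $\dim(R/I)\ge2$, and the second vanishing theorem (Ogus in equicharacteristic zero; Peskine--Szpiro and Huneke--Lyubeznik in positive characteristic, all applicable since $R$ is regular local containing a field) would force $\cd(I)\le d-2$, against the hypothesis. A radical ideal of dimension one with connected punctured spectrum has a unique minimal prime, so $I=\fp$ is prime with $\dim(R/\fp)=1$; as $R$ is analytically irreducible and $\HH^d_\fp(R)=0$, Fact A in the proof of Proposition~\ref{3} applies verbatim and yields $\Supp(\D(\HH^{d-1}_\fp(R)))=\Spec(R)$. I expect this to be the delicate step: one must recognize that the hypothesis $\cd(I)=\dim R-1$ is exactly the input that rules out a higher-dimensional ideal with connected punctured spectrum (this is precisely the second vanishing theorem), so that the only connected configuration left is the one-dimensional prime already handled by Fact A.

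If the punctured spectrum is disconnected, partition $\{\fp_1,\dots,\fp_r\}$ into two nonempty families realizing the disconnection and set $J=\bigcap_{j\in S}\fp_j$ and $K=\bigcap_{j\in T}\fp_j$, so that $J\cap K=I$ and $\V(J)\cap\V(K)=\{\fm\}$, i.e.\ $\sqrt{J+K}=\fm$. Both $J$ and $K$ are intersections of height-$h$ primes, so $\dim(R/J)=\dim(R/K)=d-h\ge1$, whence $\HH^d_J(R)=\HH^d_K(R)=0$ by $(HLVT)$; moreover $\HH^{d-1}_\fm(R)=0$ and $\HH^d_\fm(R)\simeq\E_R(k)$ because $R$ is Cohen--Macaulay of dimension $d$. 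Plugging $J$ and $K$ into the Mayer--Vietoris sequence then gives the short exact sequence
$$0\longrightarrow\HH^{d-1}_J(R)\oplus\HH^{d-1}_K(R)\longrightarrow\HH^{d-1}_I(R)\longrightarrow\E_R(k)\longrightarrow0,$$
and applying the exact functor $\D(-)$, together with $\D(\E_R(k))\simeq\widehat R$, produces an injection $\widehat R\hookrightarrow\D(\HH^{d-1}_I(R))$. Since $\widehat R$ is a faithfully flat $R$-module, $\Supp_R(\widehat R)=\Spec(R)$, so localizing this injection at an arbitrary prime of $R$ shows $\D(\HH^{\cd(I)}_I(R))_\fp\ne0$ for every $\fp$; hence $\Supp(\D(\HH^{\cd(I)}_I(R)))=\Spec(R)$. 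The remaining ingredients -- exactness of Matlis duality, $\D(\E_R(k))\simeq\widehat R$, and faithful flatness of completion -- are routine.
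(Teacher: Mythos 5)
Your Mayer--Vietoris half (the disconnected case) is fine and essentially reproduces the paper's argument in Corollary \ref{4one}, but the other half of your dichotomy rests on a misquoted second vanishing theorem, and this is a genuine gap. That theorem characterizes the vanishing of $\HH^{d-1}_I(R)$ in terms of the connectedness of the punctured spectrum of $\widehat{R}/I\widehat{R}$ \emph{and} requires the residue field to be separably closed (equivalently, a geometric connectedness condition); connectedness of $\Spec(R/I)\setminus\{\fm\}$ alone does not imply $\cd(I)\leq d-2$ when $\dim(R/I)\geq 2$. Concretely, let $R=\mathbb{Q}[x_1,\ldots,x_4]_{(x_1,\ldots,x_4)}$ and let $\fp$ be the prime ideal of $R$ cutting out the union of the two $\mathbb{Q}(i)$-conjugate planes $\V(x_1-ix_2,\,x_3-ix_4)$ and $\V(x_1+ix_2,\,x_3+ix_4)$, which meet only at the origin. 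Then $\fp$ is prime (hence unmixed) with $\dim(R/\fp)=2$, and $\Spec(R/\fp)\setminus\{\fm\}$ is connected (a Galois argument shows this persists after completion, since the two analytic branches are swapped by conjugation and are not individually defined over $\widehat{R}$); yet after the flat base change $\mathbb{Q}\to\mathbb{Q}(i)$ the ideal becomes Hartshorne's $(x_1,x_2)\cap(x_3,x_4)$ up to a linear change of coordinates, so $\HH^{3}_{\fp}(R)\neq 0$ and, by $(HLVT)$, $\cd(\fp)=3=\dim R-1$. Such an ideal satisfies every hypothesis of the lemma but escapes both of your cases: no decomposition $I=J\cap K$ with $\sqrt{J+K}=\fm$ exists over $R$, while your ``connected'' case wrongly concludes $\dim(R/I)=1$. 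Completing first does not repair this, because the disconnection only appears after extending the residue field, where the Matlis dual and its support live over a different ring.

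The paper's proof avoids connectivity altogether. When $\Ht(I)=d-1$ (equivalently $\dim(R/I)=1$, by unmixedness) it invokes Fact A, as you do. When $\Ht(I)<d-1$, unmixedness guarantees that $I_{\fq}$ is not $\fq R_{\fq}$-primary for any prime $\fq$ of height $d-1$, so $(HLVT)$ gives $\HH^{d-1}_I(R)_{\fq}=0$; hence $\Supp(\HH^{d-1}_I(R))\subseteq\{\fm\}$, and Lyubeznik's inequality $\id(\HH^{d-1}_I(R))\leq\dim(\HH^{d-1}_I(R))=0$ forces $\HH^{d-1}_I(R)\simeq\E_R(R/\fm)^t$ with $t\geq 1$, so $\D(\HH^{d-1}_I(R))\simeq\widehat{R}^t$ has full support. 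In the example above this is exactly what happens, and it is the mechanism your proposal is missing.
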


\begin{proof} Set $d:=\dim R$.
In view of   Fact A) in Proposition \ref{3} we can assume that $\Ht(I)<d-1$.
 Let $\fp\in \V(I)$ be a prime ideal of height $d-1$. Since $I$ is unmixed, $I_{\fp}$ is not primary to $\fp R_{\fp}$. Due to $(HLVT)$,  $\HH^{d-1}_I(R)_{\fp}\simeq\HH^{d-1}_{I_{\fp}}(R_{\fp})=0$.
This means that  $\dim(\HH^{d-1}_I(R))=0$.  We may assume that $\Char R=0$. By a result of Lyubeznik $0\leq\id(\HH^{d-1}_I(R))\leq\dim(\HH^{d-1}_I(R))=0$, see \cite[Corollary 3.6]{lin}.
This implies that $\HH^{d-1}_I(R)\simeq\E_R(R/\fm)^t$. By  Matlis theory, we have $\widehat{R}^t\simeq\D(\HH^{d-1}_{I}(R))$, and so
$\Supp(\D(\HH^{d-1}_I(R)))=\Spec(R)$. \end{proof}

\begin{corollary}\label{4one}
Let $R$ be a regular local ring of dimension $4$ containing a field   and $I$ be an  ideal of height $2$ and $\cd(I)=3$.    Then $\Supp (\D(\HH^{3}_I(R))) = \Spec(R)$.
\end{corollary}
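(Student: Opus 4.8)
The plan is to reduce Corollary \ref{4one} directly to Lemma \ref{CD1}. The obstruction to applying the lemma verbatim is that the lemma requires $I$ to be unmixed, whereas here $I$ is only assumed to have height $2$ and $\cd(I) = 3 = \dim R - 1$. So the first step is to pass to the height-unmixed part of $I$. Write $I = J \cap K$, where $J$ collects the primary components of $I$ of height $2$ (so $J$ is unmixed of height $2$) and $K$ collects the remaining components, so $\Ht(K) \geq 3$; one may also arrange $\Ht(J+K) \geq 4$, i.e.\ $\sqrt{J+K} = \fm$, after replacing $I$ by its radical (which does not change local cohomology). If it happens that no such decomposition is needed — i.e.\ $I$ is already unmixed of height $2$ — then we are in exactly the situation of Lemma \ref{CD1} with $d = 4$, $\cd(I) = 3 = d-1$, and we conclude immediately.

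In the genuinely mixed case, the second step is the standard Mayer--Vietoris argument, exactly as in the height-one case of Proposition \ref{3}: from
\[
\HH^i_{J+K}(R) \lo \HH^i_J(R) \oplus \HH^i_K(R) \lo \HH^i_{J \cap K}(R) \lo \HH^{i+1}_{J+K}(R)
\]
with $i = 3$, I want to kill the outer terms and the $K$-term. Since $\Ht(K) \geq 3$ and, over a regular (hence Cohen--Macaulay) local ring of dimension $4$, cohomological dimension is controlled by codimension via local duality / $(HLVT)$-type bounds, one expects $\HH^3_K(R)$ to be controllable; more to the point, since $\sqrt{J+K} = \fm$ we have $\HH^i_{J+K}(R) = \HH^i_\fm(R)$, which vanishes for $i \neq 4$, so $\HH^3_{J+K}(R) = \HH^4_{J+K}(R)$... wait — $\HH^4_\fm(R) \neq 0$, so I need to be careful at $i=3$ versus $i=4$. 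The cleaner route: $\HH^3_{J+K}(R) = \HH^3_\fm(R) = 0$ and the relevant piece of the sequence gives $\HH^3_J(R) \oplus \HH^3_K(R) \twoheadrightarrow \HH^3_I(R)$ once one also checks $\HH^3_K(R)$ contributes nothing essential, or alternatively that $\cd(K) < 3$ forces $\HH^3_K(R) = 0$; in any case one extracts a surjection $\HH^3_J(R) \twoheadrightarrow \HH^3_I(R)$ (or an isomorphism after the $K$-term and the connecting maps are accounted for).

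Dualizing that surjection gives an injection $\D(\HH^3_J(R)) \hookrightarrow \D(\HH^3_I(R))$, hence $\Supp(\D(\HH^3_J(R))) \subseteq \Supp(\D(\HH^3_I(R)))$. Now $J$ is unmixed of height $2$; I must confirm $\cd(J) = 3$ — this should follow because $\cd(I) = \max\{\cd(J), \cd(K)\}$ and $\cd(K) \leq 3$ with $\cd(K) = 3$ excluded (or, if $\cd(K)=3$ too, the same Lemma applies to $K$ and one is done via $K$ instead) — so that Lemma \ref{CD1} applies to $J$ and yields $\Supp(\D(\HH^3_J(R))) = \Spec(R)$. Therefore $\Spec(R) \subseteq \Supp(\D(\HH^3_I(R))) \subseteq \Spec(R)$, giving the claim. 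The main obstacle I anticipate is bookkeeping the Mayer--Vietoris sequence so that one genuinely obtains the surjection (or isomorphism) $\HH^3_J(R) \to \HH^3_I(R)$ — this requires the vanishing of $\HH^3_K(R)$ and of the adjacent $\HH^i_\fm(R)$ terms, and one must handle separately the sub-case where it is $K$ rather than $J$ that carries the cohomological dimension $3$.
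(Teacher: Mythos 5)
Your reduction to Lemma \ref{CD1} has the Mayer--Vietoris bookkeeping backwards at the decisive step, and this is a genuine gap rather than a cosmetic one. With $I=J\cap K$ radical, $J$ unmixed of height $2$, $\Ht(K)=3$ and $\sqrt{J+K}=\fm$, the relevant piece of Mayer--Vietoris is
$$0=\HH^3_{\fm}(R)\lo \HH^3_J(R)\oplus\HH^3_K(R)\lo \HH^3_I(R)\lo \HH^4_{\fm}(R)\lo \HH^4_J(R)\oplus\HH^4_K(R)=0,$$
where the outer vanishing comes from $\depth R=4$ and from $(HLVT)$. So the map out of the $J,K$-terms is an \emph{injection} with cokernel exactly $\HH^4_{\fm}(R)\neq0$; the surjection $\HH^3_J(R)\twoheadrightarrow\HH^3_I(R)$ you want to extract does not exist, and even if it did, Matlis duality is contravariant, so a surjection $\HH^3_J(R)\twoheadrightarrow\HH^3_I(R)$ would dualize to $\D(\HH^3_I(R))\hookrightarrow\D(\HH^3_J(R))$, which gives the useless inclusion $\Supp(\D(\HH^3_I(R)))\subseteq\Spec(R)$, not the one you need. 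Two further points: your hope that ``$\cd(K)<3$ forces $\HH^3_K(R)=0$'' can never occur, since $\cd(K)\geq\grade(K)=\Ht(K)=3$; and $\cd(J)=3$ is not automatic (e.g.\ $J$ could be a height-two complete intersection), so Lemma \ref{CD1} need not apply to $J$ at all, and your fallback ``apply the Lemma to $K$'' is only gestured at.

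The intended argument (and the paper's) is simpler and uses precisely the cokernel your argument throws away: since $\HH^4_J(R)=\HH^4_K(R)=0$ by $(HLVT)$, the sequence above yields a surjection $\HH^3_I(R)\twoheadrightarrow\HH^4_{J+K}(R)=\HH^4_{\fm}(R)$; dualizing (correctly) gives $\widehat{R}\simeq\D(\HH^4_{\fm}(R))\hookrightarrow\D(\HH^3_I(R))$, and a module containing $\widehat{R}$ has support all of $\Spec(R)$. This needs no knowledge of $\cd(J)$ or $\cd(K)$ and invokes Lemma \ref{CD1} only in the unmixed case. Your route can be repaired --- from the injection $\HH^3_K(R)\hookrightarrow\HH^3_I(R)$ one gets a surjection $\D(\HH^3_I(R))\twoheadrightarrow\D(\HH^3_K(R))$, and $K$ is unmixed of height $3=\dim R-1$ with $\cd(K)=3$, so Lemma \ref{CD1} (via its Fact A case) applies to $K$ --- but as written your proof asserts a false surjection and dualizes in the wrong direction, so it does not stand.
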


\begin{proof}Without loss of the generality, $I=\rad(I)$.
We may assume that  $I$  is not unmixed, see Lemma \ref{CD1}. Write $I=J\cap K$  where $J$  is radical and
 unmixed, $\Ht (J) = \Ht(I)$. Also, $K$ is radical, $\Ht (K) > \Ht(I)=2$ and $\Ht(J + K) = 4$. We can assume $K\neq \fm$, i.e, $\dim R/K>0$.  In the light of  Mayer-Vietoris
there is an exact sequence  $\HH^{3}_{J \cap K}(R)\to\HH^{4}_{J + K}(R)\to\HH^{4}_{J}(R) \oplus\HH^{4}_{K}(R).$ Due to $(HLVT)$,  $\HH^{4}_{J}(R)=\HH^{4}_{K}(R)=0$.
We apply Matlis functor  to see $\widehat{R}\simeq\D(\HH^{4}_{\fm}(R))\subset\D(\HH^3_{I}(R))$. This implies that
$\Supp(\D(\HH^3_I(R)))=\Spec(R)$.
\end{proof}

For any module $M$ we have $M\hookrightarrow\D(\D(M))$. From this, $\Ann(\D^2(M))\subset\Ann(M)$. Since $\Ann(\D(M))\subset\Ann(\D^2(M))\subset\Ann(M)\subset\Ann(\D(M))$, one has $\Ann(M)=\Ann(\D(M))$.

\begin{proposition}\label{simon}
Let $R:=\mathbb{Q}[x_1,\ldots,x_n]_{(x_1,\ldots,x_n)}$ and  $\fp\in\Spec(R)$ be of height $h$.  Then $\Supp (\D(\HH^h_{\fp}(R))) = \Spec(R)$.
\end{proposition}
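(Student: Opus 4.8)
The plan is to reduce the statement, via prime characteristic reduction and the Matlis-dual/formal-cohomology machinery of the previous section, to a statement about associated primes of $\D(\HH^h_{\fp}(R))$, and then to prove $0 \in \Ass(\D(\HH^h_{\fp}(R)))$, which (since $R$ is a domain) forces $\Supp(\D(\HH^h_{\fp}(R))) = \Spec(R)$. First I would pass to the $\fp$-adic data: by Fact \ref{fd} (formal duality), $\D(\HH^h_{\fp}(R)) \simeq \vpl_{\ell}\HH^{n-h}_{\fm}(R/\fp^{\ell})$, which also equals $\vpl_{\ell}\HH^{n-h}_{\fm}(R/\fp^{(\ell)})$ after replacing symbolic powers appropriately, since the $\fp$-adic and $\fp$-symbolic topologies define the same local cohomology limit by Fact \ref{top}. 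The point is that $R/\fp$ is a local domain of dimension $n-h$, so $\HH^{n-h}_{\fm}(R/\fp) \neq 0$ is its top local cohomology, and one wants to track $0 \in \Ass$ through the inverse limit.

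The key step is a "going up" or deformation argument: I would use that $\fp$ is generated up to radical by $h$ elements after a suitable change of coordinates is \emph{not} available in general (that is exactly the hard part — $\fp$ need not be set-theoretically a complete intersection), so instead I would exploit the concrete ring $R = \mathbb{Q}[x_1,\ldots,x_n]_{(\ux)}$. Here one can choose a system of parameters: pick $y_1,\ldots,y_h \in \fp$ forming part of a regular system of parameters modulo a generic enough situation, so that $\sqrt{(y_1,\ldots,y_h)} \subseteq \fp$ but more importantly $\HH^h_{\fp}(R)$ is a quotient or subquotient controlled by $\HH^h_{(y_1,\ldots,y_h)}(R)$ via the natural map on local cohomology induced by $(y_1,\ldots,y_h) \subseteq \fp$. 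Since $\HH^h_{(\underline y)}(R)$ is the top local cohomology of a regular sequence, its Matlis dual is $\widehat{R}$-free-like (an injective hull computation), giving $0 \in \Ass(\D(\HH^h_{(\underline y)}(R)))$; then I would push this through the comparison map $\HH^h_{\fp}(R) \to \HH^h_{(\underline y)}(R)$ (or its reverse), using right-exactness of top local cohomology à la Observation \ref{cdmodule}, to deduce $0 \in \Ass(\D(\HH^h_{\fp}(R)))$.

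Alternatively, and perhaps more cleanly, I would reduce to prime characteristic: the statement $\Supp(\D(\HH^h_I(R))) = \Spec(R)$ is known in characteristic $p>0$ by \cite{li} (quoted in the introduction), and standard reduction-mod-$p$ / generic flatness arguments — applied to the finitely presented data defining $\fp$ and the Čech complex computing $\HH^h_{\fp}(R)$ — let one lift the conclusion $0 \in \Ass$ back to $\mathbb{Q}$, since $\Ass$ and support behave well under the relevant base change when one works with the symbolic-power inverse system and \cite{li}'s result gives the full spectrum for almost all $p$. The main obstacle in either route is the \textbf{inverse limit}: $\Supp$ and $\Ass$ do not commute with $\vpl$ in general, so one cannot simply take the limit of $\Supp(\HH^{n-h}_{\fm}(R/\fp^{(\ell)})) = \Spec(R)$ termwise; the real work is to show that $0 \in \Ass$ \emph{survives} to the limit, which I expect requires either a Mittag-Leffler / surjectivity argument on the transition maps (so that a compatible system of $\fm$-torsion-free elements, dually, a nonzero element annihilated by no nonzero ideal, persists), or the Hom-evaluation computation $\D(\HH^h_{\fp}(R))/\fq\D(\HH^h_{\fp}(R)) \simeq \D(\Hom(R/\fq, \HH^h_{\fp}(R)))$ from the proof of Observation \ref{sch} applied with $\fq = 0$, reducing everything to showing $\Hom_R(Q(R), \HH^h_{\fp}(R)) \neq 0$ where $Q(R)$ is the fraction field — equivalently that $\HH^h_{\fp}(R)$ has a copy of the fraction field, which follows since localizing at $(0)$ gives $\HH^h_{\fp R_{(0)}}(R_{(0)})$... which vanishes, so this naive approach fails and confirms that the genuine difficulty is global and lies precisely in the non-exactness of $\vpl$.
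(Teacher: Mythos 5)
There is a genuine gap: as written, neither of your two routes reaches the conclusion, and you concede as much in your last sentence. The first route (comparing $\HH^h_{\fp}(R)$ with $\HH^h_{(y_1,\ldots,y_h)}(R)$ for a parameter-type subideal) has no mechanism for transferring $0\in\Ass$ across the comparison map: the map $\HH^h_{(\underline y)}(R)\to\HH^h_{\fp}(R)$ (or its reverse) is neither surjective nor injective in general, and top local cohomology with respect to $(\underline y)$ sits in degree $n$ only when $\underline y$ is a full system of parameters, not $h$ elements; you never state, let alone prove, a lemma that would make this comparison work. The second route is asserted, not argued: "standard reduction mod $p$" does not transport $\Supp(\D(\HH^h_I(-)))=\Spec(-)$ from characteristic $p$ back to $\mathbb{Q}$, because the objects involved (Matlis duals of non-finitely generated modules, their associated primes, the inverse systems of formal cohomology) do not commute with the relevant base changes; if such a reduction were routine, Conjecture \ref{conj1} itself would follow, and it is open. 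Your closing observation that $\Hom_R(Q(R),\HH^h_{\fp}(R))=0$ correctly kills the naive attempt, but it leaves you with no proof.

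What the paper actually does is quite different and uses two ingredients absent from your proposal. First, it produces the single prime $\fp$ in $\Supp(\D(\HH^h_{\fp}(R)))$ by dualizing the exact sequence $0\to\HH^h_{\fp}(R)\to\HH^h_{\fp}(R)_{\fp}\to C\to 0$: the middle dual is flat (dual of an injective), hence has full support, and $\fp\notin\Supp(\D(C))$ because $\fp C\neq 0$, which in turn rests on the equality $\id_R(\HH^h_{\fp}(R))=\dim_R(\HH^h_{\fp}(R))=\dim R/\fp>0$ (Fact A in Corollary \ref{lci}) contrasted with the injectivity of $\HH^h_{\fp}(R)_{\fp}\simeq\E_R(R/\fp)$. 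Second, to pass from $\V(\fp)\subseteq\Supp(\D(\HH^h_{\fp}(R)))$ to the full spectrum, the paper shows $\Ann_R(\HH^h_{\fp}(R))=0$ (via Lyubeznik's $\id\leq\dim$ at a minimal prime of the support, which forces a faithful $\E_{R_{\fp}}(k(\fp))^t$), invokes Matlis's theorem that $\D(\HH^h_{\fp}(R))$ is complete in the $\fp$-adic topology, and then applies Simon's Baire-category argument: for a complete module over a \emph{countable} local ring, $\Supp(M)=\V(\Ann M)$ once $\V(\fp)\subseteq\Supp(M)$. This countability is precisely why the statement is over $\mathbb{Q}[x_1,\ldots,x_n]_{(x_1,\ldots,x_n)}$, a feature of the hypothesis your proposal never exploits; your diagnosis that the difficulty is "non-exactness of $\vpl$" points at a real obstruction, but the paper's proof sidesteps inverse limits entirely rather than resolving a Mittag-Leffler issue.
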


\begin{proof} Since the proposition is trivial for the maximal ideal,
we may assume that $\dim R/ \fp>0$.
The proof divided into some steps.

 \begin{enumerate}
\item[Step ]1) Let $A$ be a countable  local ring, $J\vartriangleleft A$ and $M$  a
complete $A$-module with respect to $J$-adic topology. Suppose $\V(J)\subset\Supp(M).$ Then $\Supp(M) = \V(\Ann_AM)$.
\end{enumerate}

Here, we use a trick taken from \cite[9.3]{Si}. Recall that  $\Supp(M) \subseteq \V(\Ann_AM) $. Suppose on the contradiction that there is $\fp\in \V(\Ann_AM)\setminus\Supp(M)$.
For each $y\in \fm\setminus\fp$,  set $ K_y := \ker (M\stackrel{y}\lo M)$. Since $M_{\fp}=0$, each   element of $M$ is annihilated by some $x\in A\setminus\fp$. In conclusion, 
$M= \bigcup_{y\in \fm\setminus\fp}K_y$. Any $A$-module homomorphism is continuous. Apply this for the
multiplication map, we see $K_y$ is closed in $M$. We recall Baire's category theorem: any complete metric space
is not the union of a countable family of nowhere dense subsets. By definition, a topological set is called  nowhere dense  if its closure has no interior points. Therefore, one of $\{K_y\}_{y\in \fm\setminus\fp}$, say $K_x$, has an interior point, because $K_x$ is closed.
Since the
translations $M\to M$ are bicontinuous, we may and do assume that the origin $0$ is an interior point of $K_x$.
That is
$K_x \supset J^n M$ for some $n$. This in turns  equivalent with $J^nx\subset\Ann_AM\subseteq \fp.$ We know from $x\notin\fp$ that
$J\subset \fp$. This is a contradiction, because $\fp\in\V(J)\subset\Supp(M)$.

 \begin{enumerate}
\item[Step]2)  Let $I\vartriangleleft R$ be  such that  $\V(I)\subset\Supp_R  (\D(\HH^i_I(R)))$.
Then $\Supp  (\D(\HH^i_I(R))) = \Spec(R)$.
\end{enumerate}

Indeed, let $\fp$ be a minimal element in support of $\HH^i_I(R)$.   By \cite[Corollary 3.6]{lin}
$\id(\HH^i_{I_{\fp}}(R_{\fp}))\leq\dim(\HH^i_{I_{\fp}}(R_{\fp}))=\dim(\HH^i_I(R)_{\fp})=0.$ This means that $ \HH^i_{I_{\fp}}(R_{\fp})=\E_{R_{\fp}}(R_{\fp}/\fp R_{\fp} )^t$. In particular, $ \HH^i_{I_{\fp}}(R_{\fp})$ is faithful as an $R_{\fp}$-module. Since $R\subset R_{\fp}$, it makes $ \HH^i_{I_{\fp}}(R_{\fp})$ faithful  over $R$. Let
 $r$ be such that $r\HH^i_I(R)=0$. Clearly, $r\HH^i_{I_{\fp}}(R_{\fp})=0$. From this, $r=0$.
 We   deduce that $\Ann_R(\D(\HH^i_I(R)))=\Ann_R(\HH^i_I(R))=0$.   Matlis proved in \cite[Corollary 4.10]{mat} that
$\D(\HH^i_I(R))$ is complete with respect to $I$-adic topology.  Eventually, Step 1) implies that  $\Supp  (\D(\HH^i_I(R))) = \Spec(R)$, as claimed.

 \begin{enumerate}
\item[Step] 3) Let $A$  be any Gorenstein local ring and  $\fq$ a prime ideal of grade $g$. Then  $\Ass(\HH^g_{\fq}(A))=\{\fq\}$.
\end{enumerate}
 This follows by looking at the explicit injective resolution of $A$: Let $\triangle_i$ be the prime ideals of height $i$. The $h$-th spot of the resolution is
$\bigoplus_{\fp\in\triangle_h}\E_A(A/ \fp)$.  If we apply $\Gamma_{\fq}$ to it we see that $\HH^g_{\fq}(A)\hookrightarrow\E_A(A/ \fq)$. Since
$\Ass(\E_A(A/ \fq))=\{\fq\}$, we have $\Ass(\HH^g_{\fq}(A))=\{\fq\}$.
 \begin{enumerate}
\item[Step]4) Since $\zd(\HH^h_{\fp}(R))=\cup_{\fq\in\Ass(\HH^h_{\fp}(R))}\fq=\fp$ we see $\HH^h_{\fp}(R)\hookrightarrow\HH^h_{\fp}(R)_{\fp}$.
Let $C:=\frac{\HH^h_{\fp}(R)_{\fp}}{\HH^h_{\fp}(R)}$. Then $\fp C\neq0$.
\end{enumerate}
Indeed,
suppose on the contradiction that $\fp C=0$. This in turns equivalent with $\fp\HH^h_{\fp}(R)_{\fp}\subset\HH^h_{\fp}(R)$.
Since $\HH^h_{\fp_{\fp}}(R_{\fp})\simeq\E_{R}\left(R/ \fp \right)_{\fp}\simeq\E_{R}\left(R/ \fp \right)$,  we  arise to the  injectivity of $\HH^h_{\fp_{\fp}}(R_{\fp})$. Hence
it is divisible over the integral domain. This implies that $\HH^h_{\fp}(R)_{\fp}=\fp\HH^h_{\fp}(R)_{\fp}\subset\HH^h_{\fp}(R)\subset\HH^h_{\fp}(R)_{\fp}.$
We deduce from this that $\HH^h_{\fp}(R)\simeq\HH^h_{\fp}(R)_{\fp}$. Also, we know from  Fact A)  in Corollary \ref{lci} (see below) that   $\dim_R (\HH^h_{\fp}(R))=\id_R (\HH^h_{\fp}(R))=\dim R/ \fp$. Recall that $\dim R/ \fp>0$. To get a contradiction it remains to note that $\id_R(\HH^h_{\fp}(R)_{\fp})=0$. In sum,  $\fp C\neq0$.

Step 5)
 We look at the exact sequence
$0\to \HH^h_{\fp}(R)\stackrel{\iota}\lo \HH^h_{\fp}(R)_{\fp}\to C:=\coker(\iota)\to 0$
and take Matlis duality to see
$0\to \D(C)\to \D(\HH^h_{\fp}(R)_{\fp})\to \D(\HH^h_{\fp}(R)) \to 0.$
This shows that $$\Supp(\D(\HH^h_{\fp}(R)_{\fp}))\subset\Supp(\D(\HH^h_{\fp}(R)))\cup\Supp(\D(C))\quad(\ast)$$
\begin{enumerate}
\item[Claim] A): $\fp\notin\Supp(\D(C))$. Indeed, since $\Supp(-)\subset\V(\Ann(-))$ we need to
show  $\fp\notin\V(\Ann(\D(C)))$. Since
 $\V(\Ann(\D(C)))=\V(\Ann(C))$, it remains to apply Step 4).
\end{enumerate}

Since  $\HH^h_{\fp}(R)_{\fp} $ is injective, $\D(\HH^h_{\fp}(R)_{\fp})$ is flat. Support of any flat
module is the full prime spectrum. In particular, $\fp\in \Supp(\D(\HH^h_{\fp}(R)_{\fp}))$. We apply Claim A) along with $(\ast)$
to see that $\fp\in\Supp(\D(\HH^h_{\fp}(R)))$. We deduce that  $\V(\fp)\subset\Supp (\D(\HH^h_{\fp}(R)))$. Recall that $\D(\HH^h_{\fp}(R))$
is complete with respect to $\fp$-adic topology.
Now,  Step 2) establishes what we wanted.
\end{proof}

\begin{corollary}\label{coprime}
Let $R_n:=\mathbb{Q}[x_1,\ldots,x_n]_{(x_1,\ldots,x_n)}$    and $\fp\in \Spec(R_n)$ be cohomologically CI.    Then $\Supp (\D(\HH^{i }_{\fp}(R_n))) = \Spec(R_n)$ provided
$\HH^{i }_{\fp}(R_n)\neq 0$.
\end{corollary}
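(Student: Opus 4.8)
The plan is to read this off directly from Proposition \ref{simon}, since ``cohomologically CI'' pins down the cohomological degree in which the only nonzero module sits. First I would fix $h := \Ht(\fp)$ and recall that $R_n$, being regular, is Cohen--Macaulay; hence $\grade(\fp, R_n) = \Ht(\fp) = h$. Because the grade is exactly the least index with non-vanishing local cohomology, this gives $\HH^h_{\fp}(R_n) \neq 0$. Thus among all the modules $\HH^i_{\fp}(R_n)$ the one in degree $h$ is always nonzero, and by hypothesis it is the \emph{only} one: $\HH^i_{\fp}(R_n) = 0$ for every $i \neq h$.

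Consequently, the only index $i$ for which the hypothesis ``$\HH^i_{\fp}(R_n) \neq 0$'' can be satisfied is $i = h$. For that index, Proposition \ref{simon} applies verbatim to the prime $\fp$ of height $h$ in $R_n = \mathbb{Q}[x_1,\ldots,x_n]_{(x_1,\ldots,x_n)}$ and yields
\[
\Supp\bigl(\D(\HH^{i}_{\fp}(R_n))\bigr) = \Supp\bigl(\D(\HH^{h}_{\fp}(R_n))\bigr) = \Spec(R_n),
\]
which is precisely the assertion.

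I do not expect any genuine obstacle here: all the real work has already been carried out in Proposition \ref{simon} (via the Baire category argument of Step~1, the faithfulness computation of Step~2, and the injective-module input of Steps~4--5). This corollary is pure bookkeeping --- the substantive point is simply the observation that the cohomologically CI condition forces the single surviving cohomology module into the degree $\Ht(\fp)$, which is exactly the degree handled by Proposition \ref{simon}. One could equally well state this for any regular local ring in which Proposition \ref{simon} is known to hold for $\fp$, but over $R_n$ the cited proposition is unconditional.
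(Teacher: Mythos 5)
Your argument is correct and is exactly what the paper intends: the cohomologically CI hypothesis forces the only possibly nonzero module to sit in degree $\Ht(\fp)$, and Proposition \ref{simon} then gives $\Supp(\D(\HH^{\Ht(\fp)}_{\fp}(R_n)))=\Spec(R_n)$. The paper states this corollary without proof as an immediate consequence of Proposition \ref{simon}, so your write-up matches the intended reasoning.
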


\begin{corollary}\label{nprime}
Let $R:=R_n$    and $\fp\in \Spec(R)$ be of height at least $n-2$.    Then $\Supp (\D(\HH^{i }_{\fp}(R))) = \Spec(R)$ provided
$\HH^{i }_{\fp}(R)\neq 0$. In particular, for any  $\fp\in \Spec(R_4)$ we have $\Supp (\D(\HH^{i }_{\fp}(R_4))) = \Spec(R_4)$ provided
$\HH^{i }_{\fp}(R_4)\neq 0$.
\end{corollary}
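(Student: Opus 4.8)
The plan is to deduce this from Proposition \ref{simon} together with Lemma \ref{CD1}, by first pinning down exactly which modules $\HH^i_\fp(R)$ can be nonzero. Write $n:=\dim R$ and $h:=\Ht(\fp)$, so that $d:=\dim R/\fp=n-h\in\{0,1,2\}$ by hypothesis. Since $R$ is Cohen--Macaulay and $\fp$ is prime, $\grade(\fp,R)=h$, hence $\HH^i_\fp(R)=0$ for all $i<h$; and since $\widehat{R}$ is a domain, $(HLVT)$ (applied after the faithfully flat base change $R\to\widehat{R}$, as $\cd(\fp,R)=\cd(\fp\widehat{R},\widehat{R})$) gives $\HH^n_\fp(R)=0$ whenever $d>0$. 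Thus the indices $i$ for which $\HH^i_\fp(R)$ can be nonzero are: $i=n$ if $d=0$; $i=n-1$ if $d=1$; and $i\in\{n-2,n-1\}$ if $d=2$.

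First I would dispose of $d=0$: then $\fp=\fm$, the only nonzero local cohomology is $\HH^n_\fm(R)=\E_R(k)$, and $\D(\HH^n_\fm(R))=\widehat{R}$ has support $\Spec(R)$. Next, for every $d$ the index $i=h$ is covered directly by Proposition \ref{simon}, since that module is precisely $\HH^{\Ht(\fp)}_\fp(R)$; this settles the only surviving index when $d=1$ and the lower surviving index when $d=2$. It remains to treat the one extra index $i=n-1$ that survives when $d=2$: assuming $\HH^{n-1}_\fp(R)\neq 0$, combining with $\HH^n_\fp(R)=0$ yields $\cd(\fp)=n-1=\dim R-1$; a prime ideal is height-unmixed, so $I=\fp$ satisfies the hypotheses of Lemma \ref{CD1}, which gives $\Supp(\D(\HH^{n-1}_\fp(R)))=\Spec(R)$. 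This finishes the general statement.

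For the displayed assertion over $R_4$ (so $n=4$): primes of height $2,3,4$ are covered by the above, since $h\geq n-2=2$; a height-one prime $\fp$ of $R_4$ is principal, say $\fp=(x)$ as $R_4$ is a regular (hence factorial) local ring, so $\HH^i_{(x)}(R_4)=0$ for $i\neq 1$ while $\Ass(\D(\HH^1_{(x)}(R_4)))=\Spec(R_4)\setminus\V(x)\ni(0)$ by \cite[Theorem 1.3]{Hel3}, whence the support is $\Spec(R_4)$; the zero ideal is the degenerate case excluded by the standing convention. There is no real obstacle here: the entire content sits in Proposition \ref{simon} and Lemma \ref{CD1}, and the only points requiring care are the bookkeeping of the vanishing ranges via $\grade$ and $(HLVT)$ and the observation that primeness supplies the unmixedness needed to invoke Lemma \ref{CD1}.
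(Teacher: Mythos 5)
Your proposal is correct and follows essentially the same route as the paper: vanishing bookkeeping via grade and $(HLVT)$, Proposition \ref{simon} for the index equal to $\Ht(\fp)$, Lemma \ref{CD1} (with primeness supplying unmixedness) for the possible extra index $n-1$ when $\Ht(\fp)=n-2$, and for $R_4$ the height-one case via principality and the Hellus--St\"uckrad computation of $\Ass(\D(\HH^1_{(x)}(R)))$. The only cosmetic difference is that you invoke Proposition \ref{simon} directly where the paper routes the height-$(n-1)$ case through Fact A) of Proposition \ref{3} and the cohomologically CI case through Corollary \ref{coprime}; the underlying ingredients are identical.
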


\begin{proof}We may assume that $n>3$ (see Proposition \ref{3}).  In view of Fact \ref{3}.A) we may and do assume that $\Ht(\fp)=n-2$. If $\cd(\fp)=n-2$, then  $\fp$ is cohomologically CI. The desired claim is Corollary \ref{coprime}.  By
$(HLVT)$, $\cd(\fp)\neq n$. Without loss of the generality, we may assume that $\cd(\fp)=n-1$ (this may rarely happen, see e.g. the second vanishing theorem \cite[Theorem 7.5]{Hcd}).  Since $\HH^{i }_{\fp}(R)\neq 0$ we deduce that either $i=n-2$ or $i=n-1$.
Due to Proposition \ref{simon} $\Supp (\D(\HH^{n-2}_{\fp}(R))) = \Spec(R)$.
In view of  Lemma \ref{CD1}  $\Supp (\D(\HH^{n-1}_{\fp}(R))) = \Spec(R)$. This completes the proof of first claim.
Here, we deal with the particular case: The cases $\Ht(\fp)\in\{1,3,4\}$ are trivial, see Proposition \ref{3}.  We may assume that  $\Ht(\fp)=2$.
We proved this in the first part.
\end{proof}

\begin{example}
 Let $R:=R_n$    and $\fp\in \Spec(R)$ be of height $n-2$.
Suppose in addition that there is a homogeneous $\fq\lhd_{h} A$ such that $\fp=\fq R$, where $A:=\mathbb{Q}[x_1,\ldots,x_n]$.
It follows from primeness that $X:=\Proj(R/\fq)$ is connected and that $\dim X=1$. By second vanishing theorem, $\HH^{n-2}(\PP^{n-1}_{\mathbb{Q}}\setminus X,\mathcal{O})=0$.
We use this to see $\HH^{n-1}_{\fq}(A)=0$. Since localization commutes with local cohomology, we deduce that
$\HH^{n-1}_{\fp}(R)=0$.
\end{example}

Let  $R:=k[x_1,\ldots,x_d]$ be the polynomial ring. Let $q>1$ be any integer. The assignment $X_i\mapsto X_i^q$ induces a ring homomorphism
$\F_q:R\to R$. By $\F_q(R)$, we mean $R$ as a group equipped with  left and right scalar multiplication from $R$
given by
$a.r\star b = a\F_q(b)r,$ where $a,b\in R$ and $r\in \F_q(R).$  For an $R$-module $M$, set $\F_q(M):= \F_q(R)\otimes M.$
The left $R$-module structure of $\F_q(R)$ endows $\F_q(M)$ with a left $R$-module structure
such that $c(a\otimes x)=  (ca)\otimes x$. For an $R$-linear map  $\varphi: M\to N$, we set
$\F_q(\varphi):=1_{\F_q(R)}\otimes\varphi$. For any monomial ideal $I$ we have $\F_q( R/I)\simeq R/I^{[q]}$.
Also,  $\F_q(R)$ is flat.

\begin{proposition}\label{mo}
Let  $R$   and $I$ be as above. If $\HH^{i}_I(R)\neq0$, then $\Supp(\D(\HH^{i}_I(R)))=\Spec(R)$.
\end{proposition}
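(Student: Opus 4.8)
\emph{Strategy.} The plan is: first reduce to the complete local ring $k[[x_1,\dots,x_d]]$, then induct on the number $t:=|\Min(I)|$ of minimal primes of $I$ by means of Mayer--Vietoris, using the flat functor $\F_q$ and the identities it produces for monomial ideals to pass, at each stage, to ideals with fewer monomial minimal primes; the complete--intersection case $t=1$ is the base, supplied by \cite[Lemma 2.1]{Hel2}. For $\Char k\neq 0$ the assertion is already in \cite{li}, so one may even assume $\Char k=0$, but the argument below is characteristic free.

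\emph{Reduction.} As $\HH^i_I(-)$ depends only on $\sqrt I$, assume $I$ is squarefree. Since $\E_R(k)$ is $\fm$-torsion, $\D(\HH^i_I(R))$ is a complete $\widehat{R_\fm}$-module, canonically $\D\big(\HH^i_{I\widehat{R_\fm}}(\widehat{R_\fm})\big)$, and it suffices to establish the statement over $\widehat{R_\fm}\cong k[[x_1,\dots,x_d]]$. By formal duality (Fact \ref{fd}) this module equals $\vpl_\ell\HH^{d-i}_\fm(\widehat{R_\fm}/I^\ell)$, and because the bracket powers $I^{[q]}$ are cofinal with the ordinary powers $I^\ell$ for a monomial ideal, Fact \ref{top} together with $\F_q(R/I)=R/I^{[q]}$ identify it with $\vpl_q\HH^{d-i}_\fm(\F_q(R/I))$. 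Henceforth $R=k[[x_1,\dots,x_d]]$ and we argue with these inverse limits.

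\emph{The induction.} Write $I=\fp_1\cap\cdots\cap\fp_t$ irredundantly with the $\fp_s$ monomial primes. For $t=1$, $I$ is a complete intersection, and we are done by \cite[Lemma 2.1]{Hel2}. For $t\geq2$ set $J:=\fp_1\cap\cdots\cap\fp_{t-1}$, so $I=J\cap\fp_t$. Two facts drive the step. First, $J+\fp_t=\bigcap_{s<t}(\fp_s+\fp_t)$ and each $\fp_s+\fp_t$ is again a monomial prime, so $J+\fp_t$, like $J$, is an intersection of at most $t-1$ monomial primes. Second, applying the exact functor $\F_q$ to $0\to R/(\fa\cap\fb)\to R/\fa\oplus R/\fb\to R/(\fa+\fb)\to0$ shows $\F_q$ commutes with finite intersections and sums of monomial ideals, whence $I^{[q]}=J^{[q]}\cap\fp_t^{[q]}$, $(J+\fp_t)^{[q]}=J^{[q]}+\fp_t^{[q]}$, and $R/\fp_t^{[q]}$ is Cohen--Macaulay. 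Now apply $\HH^\bullet_\fm(-)$ to $0\to R/I^{[q]}\to R/J^{[q]}\oplus R/\fp_t^{[q]}\to R/(J+\fp_t)^{[q]}\to0$ and pass to $\vpl_q$ (an exact operation on these systems of Artinian modules); via Facts \ref{fd} and \ref{top} this produces a long exact sequence relating $\D(\HH^\bullet_I(R))$, $\D(\HH^\bullet_J(R))$, $\D(\HH^\bullet_{\fp_t}(R))$ and $\D(\HH^\bullet_{J+\fp_t}(R))$, in which the $J$- and $(J+\fp_t)$-terms have full support whenever nonzero (induction, since they have fewer than $t$ minimal primes) and the $\fp_t$-term likewise (base case). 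One then repeats the dichotomy of the Section 2 computation for $|\Min(I)|\leq2$: if $\HH^i_J(R)\neq0$ or $\HH^i_{\fp_t}(R)\neq0$, the relevant stretch degenerates to a surjection of $\D(\HH^i_I(R))$ onto $\D(\HH^i_J(R))\oplus\D(\HH^i_{\fp_t}(R))$, forcing $\Spec R\subseteq\Supp(\D(\HH^i_I(R)))$; otherwise both vanish, and one either identifies $\D(\HH^i_I(R))$ with $\D(\HH^{i+1}_{J+\fp_t}(R))$ (full support by induction) or contradicts the height inequalities supplied by $(HLVT)$.

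\emph{Main obstacle.} The formalism above is routine once the inductive hypothesis is in place; the real difficulty is the height/depth bookkeeping in the inductive step. For $t\leq2$ all three quotients $R/J^{[q]}$, $R/\fp_t^{[q]}$, $R/(J+\fp_t)^{[q]}$ are Cohen--Macaulay, which is exactly what localizes the nonvanishing of the adjacent $\HH^\bullet_\fm$ to a single degree and makes the dichotomy clean; for $t\geq3$ neither $R/J^{[q]}$ nor $R/(J+\fp_t)^{[q]}$ is Cohen--Macaulay in general, so one must carry through the induction a companion statement recording the cohomological degrees in which $\HH^\bullet_K(R)$ can be nonzero (a local analogue of the vanishing ranges in the theorems of Hartshorne and Peskine--Szpiro quoted in the introduction) and use it, in place of depth, to kill the unwanted terms in the Mayer--Vietoris sequence. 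Setting up and propagating this combined induction — full support together with the vanishing range — is the crux of the argument.
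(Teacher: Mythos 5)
Your reduction to $k[[x_1,\dots,x_d]]$, the cofinality of the bracket powers, and the base case $t=1$ via \cite[Lemma 2.1]{Hel2} are all fine, but the inductive step is not a proof, and you say so yourself: the dichotomy that makes the Mayer--Vietoris argument work for $|\Min(I)|\leq 2$ (the paper's Example 2.10) rests on all three quotients $R/J^{[q]}$, $R/\fp_t^{[q]}$, $R/(J+\fp_t)^{[q]}$ being Cohen--Macaulay of pairwise distinct dimensions, which confines each $\HH^{\bullet}_{\fm}$ to a single degree and forces the sequence to degenerate into either a surjection or an isomorphism. For $t\geq 3$ neither $R/J^{[q]}$ nor $R/(J+\fp_t)^{[q]}$ is Cohen--Macaulay, so $\HH^{\bullet}_J(R)$ and $\HH^{\bullet}_{J+\fp_t}(R)$ may be nonzero in several degrees at once; in the mixed situation where $\HH^i_J(R)\oplus\HH^i_{\fp_t}(R)\neq 0$ \emph{and} $\HH^{i+1}_{J+\fp_t}(R)\neq 0$, the dualized sequence yields neither a surjection of $\D(\HH^i_I(R))$ onto the full-support sum nor an embedding of a full-support module into it, and full support of the outer terms of an exact sequence does not transfer to the middle term. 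The ``companion statement recording the vanishing range'' that you propose to carry through the induction is precisely the missing ingredient: you neither formulate it nor show it propagates under your decomposition $I=J\cap\fp_t$, $J+\fp_t=\bigcap_{s<t}(\fp_s+\fp_t)$. So the argument halts exactly where the real difficulty starts, as your ``Main obstacle'' paragraph concedes.

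For contrast, the paper does not induct on $\Min(I)$ at all and needs no Cohen--Macaulay bookkeeping. It runs the Lyubeznik--Yildirim construction: using the flat endomorphism $\F_q$ (your bracket-power observation) together with local and formal duality, it identifies $\D(\HH^i_I(R))$ with the inverse limit of a surjective tower $F_{q^{\ell}}(N)$ where $N:=\D(\Ext^i_R(R/I,R))$; it shows $\Ann_R(N)$ is a monomial ideal via the $\mathbb{Z}^d$-graded minimal free resolution, so $\ker(\F_q(N)\to N)\neq 0$; and it perturbs a compatible sequence by socle elements of this kernel to produce $\textbf{n}\in\D(\HH^i_I(R))$ with $\Ann(\textbf{n})\subseteq\bigcap_k\fm^k=0$, whence $0\in\Ass(\D(\HH^i_I(R)))$ and the support is all of $\Spec(R)$. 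If you want to rescue your route you must actually prove and propagate the vanishing-range statement; the element-construction argument bypasses that obstacle entirely.
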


\begin{proof} Here, $\D(-) := \Hom_R(-, \E_R(\frac{R}{(x_1,\ldots,x_d)}))$. We use a trick taken from \cite{li}. We may replace $I$ by its radical.  A monomial ideal   is   radical  if and only if it  is   squarefree monomial ideal.
We assume that  $I$ is squarefree.

Step 1) One has $(1.a):\Ext^j_R(R/I,R)\subset \Ext^j_R(R/I^{[q]},R)$. This is in \cite[Theorem 1.(i)]{lim}.
  Let $\mathcal{F}$ be a finite free resolution of  $R/I$. It turns out  $\F_q(\mathcal{F})$ is a free resolution of $R/I^{[q]}$.
We have, (1.b): $$
\F_q(\Ext^j_R(\frac{R}{I},R))\simeq \F_q(H^j(\Hom_R(\mathcal{F},R)) \simeq H^j(\F_q(\Hom_R(\mathcal{F},R)))\simeq H^j(\Hom_R(\F_q(\mathcal{F}),R))\simeq \Ext^j_R(\frac{R}{I^{[q]}},R)
.$$
Let $J$ be a monomial ideal. In the same vein, we have $
\F_q(\Ext^j_R(R/J,R/I))\simeq \Ext^j_R(R/ J^{[q]},R/I^{[q]})
$. Since $\F_q$ computes with taking direct limit and by the previous isomorphism, we have
$$\F_q({\varinjlim}_n\Ext^{j}_{R}(\frac{R}{\fm^n},\frac{R}{I}))\simeq{\varinjlim}_n\F_q(\Ext^{j}_{R}(\frac{R}{\fm^n},\frac{R}{I}))\simeq {\varinjlim}_n\Ext^{j}_{R}((\frac{R}{\fm^n})^{[q]},\frac{R}{I^{[q]}})\simeq\HH^{j}_{\fm}(\frac{R}{I^{[q]}})\quad(1.c)
$$

Step 2) There is an artinian module $N$ such that$$
\begin{CD}
\D(\HH^{i}_I(R)) \simeq\vpl (N  @<\alpha<< \F_q(N) @< \F_q(\alpha)<<F_{q^2}(N)  @<F_{q^2}(\alpha)<< \cdots),
\end{CD}
$$with surjective morphisms. Indeed, first we denote the local duality by $(+)$ and remark that:
$$\F_q(\D(\Ext^j_R(\frac{R}{I},R)))
 \stackrel{(+)}\simeq \F_q(\HH^{d-j}_{\fm}(\frac{R}{I}))
 \stackrel{(1.c)}\simeq \HH^{d-j}_{\fm}(\frac{R}{I^{[q]}})
 \stackrel{(+)}\simeq \D(\Ext^j_R(\frac{R}{I^{[q]}},R))
 \stackrel{(1.b)}\simeq\D(\F_q(\Ext^j_R(\frac{R}{I},R)))\quad(\ast)
$$
We look at $
\D(\HH^{i}_I(R))
\stackrel{\ref{fd}}\simeq\vpl_{\ell}\HH^{d-i}_{\fm}(\frac{R}{I^{\ell}})\stackrel{\ref{top}}
\simeq\vpl_{\ell}\HH^{d-i}_{\fm}(\frac{R}{I^{[q^{\ell}]}})\stackrel{(+)}\simeq\vpl_{\ell}\D(\Ext^{i}(\frac{R}{I^{[q^{\ell}]}},R))
\stackrel{(\ast)}\simeq\vpl_{\ell}F_{q^{\ell}}(\D(\Ext^{i}(\frac{R}{I},R)).
$  In view of Step (1.a) the maps $\Ext^j_R(\frac{R}{I^{[q^{\ell}]}},R)\to \Ext^j_R(\frac{R}{I^{[q^{\ell+1}]}},R)$
are injective. So, their Matlis dual are surjective. It remains to  set $N:=\D(\Ext^{i}(R/I,R))$.

Step 3) Let $J:=\Ann_R(N)$. We are going to show that $J$ is monomial. Recall that
 $\Ann_R(N)=\Ann_R(\D(N))$.  Also,  $\D(N)=\Ext^{i}_R(R/I,R)(g)$ where $g\in \mathbb{Z}^d$ is a graded shifting.  A minimal free-resolution $ \mathcal{F}$ of $R/I$ is a complex consisting of $\mathbb{Z}^d$-graded modules equipped with
$\mathbb{Z}^d$-graded differentials.  From this,
$\HH^{\ast}(\Hom(\mathcal{F},R))$ is  $\mathbb{Z}^d$-graded. Annihilator of any $\mathbb{Z}^d$-graded module is an $\mathbb{Z}^d$-graded  ideal.
So, $J$ is monomial.

Step 4) There exists an element $\textbf{n}:= (n_0,n_1, \ldots)\in \D(\HH^{i}_I(R))$
such that $n_k \in F_{q^k}(N)$ and $F_{q^{k-1}}(n_k) = n_{k-1}$
and with the property that
$\Ann(n_k)\subset \fm^k$
for all $k \geq 4$. This step is in the proof of \cite[Theorem 1.1]{li}. For the simplicity of the reader,
we repeat the main idea:  Since $J$ is monomial, $\Ann(\F_q(N)) = J^{[q]}$. In particular, $\ker( \F_q(N)\stackrel{\alpha}\lo N) \neq 0.$ Let $b_1 \in\Soc(\ker(\alpha))$,  and define $b_k$  as the image of $b_{k-1}$
under $\F_q$. Let $n_0\in N$. By the subjectivity, there are $n_i$ such that
$n_{i}=F_{q^i}(\alpha)(n_{i+1})$. For each $k>3$, they showed that
 either $\Ann(n_k)\subset \fm^k$
or $\Ann(n_k + b_k)\subset \fm^k$.
In the second possibility we replace $n_i$ with $n_k + b_k$ and denote it again by $n_i$. This is now clear that $\Ann(n_k)\subset \fm^k$ for all $k \geq 4$.

Step 5) Here, we present the proof: Let $r\in\Ann(\textbf{n})$. Then $r\in\bigcap\Ann(n_k)\subseteq\bigcap \fm^k=0$.
From this, we get that $0=(0:\textbf{n})\in\Ass(\D(\HH^{i}_I(R)))$. Consequently,  $\Supp(\D(\HH^{i}_I(R)))=\Spec(R)$.
\end{proof}

Hellus proved  that $\Supp(\D(\HH^{\cd(I)}_I(R)))=\Spec(R)$ over any  equi-characteristic analytically irreducible local ring provided $I$ generated by a regular sequence.
We extend this  in the following sense:

\begin{corollary}\label{cmo}
Let  $(R,\fm,k)$ be a Cohen-Macaulay equi-characteristic analytically irreducible local ring, $I$ a monomial ideal with respect to a regular sequence such that $\HH^{i}_I(R)\neq0$. Then $\Supp(\D(\HH^{i}_I(R)))=\Spec(R)$.
\end{corollary}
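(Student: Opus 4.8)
The plan is to reduce the statement to Proposition \ref{mo} by a Noether normalization together with a base-change computation of the Matlis dual, exploiting that $R$ is faithfully flat over a formal power series ring in which $I$ becomes an ordinary monomial ideal.

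First, reductions. As $R\hookrightarrow\widehat R$ and $\widehat R$ is a domain, $R$ is a domain; and since $\D_{\widehat R}\big(\HH^i_{I\widehat R}(\widehat R)\big)\simeq\D_R\big(\HH^i_I(R)\big)$ while $\widehat R$ is faithfully flat over $R$, the equality ``$\Supp(\D(\HH^i_I(-)))=\Spec(-)$'' holds over $R$ iff it holds over $\widehat R$; so we may assume $R$ is complete, and we fix a coefficient field $k$. Let $\underline x=x_1,\dots,x_n$ be the regular sequence with respect to which $I$ is monomial. A regular sequence in a Cohen--Macaulay local ring is part of a system of parameters, so extend it to a system of parameters $x_1,\dots,x_n,y_1,\dots,y_{d'}$ of $R$, put $N:=\dim R$, and let $T:=k[[Z_1,\dots,Z_N]]\to R$ send $Z_i\mapsto x_i$ for $i\le n$ and $Z_{n+j}\mapsto y_j$. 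By the structure theory of complete local rings $R$ is a finite $T$-module, and since $R$ is Cohen--Macaulay, $T$ is regular and $\dim R=\dim T$, the module $R$ is actually finite free over $T$ (so $T\to R$ is faithfully flat). Let $J\lhd T$ be generated by the same monomials in $Z_1,\dots,Z_n$ that generate $I$ in $\underline x$; then $JR=I$, flat base change gives $\HH^i_I(R)\simeq\HH^i_J(T)\otimes_T R$, and $\HH^i_J(T)\ne0$ by faithful flatness.

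Next, the duals. The $R$-module $\Hom_T(R,\E_T(k))$ is injective over $R$ (coinduction of an injective is injective: $\Hom_R\big(-,\Hom_T(R,\E_T(k))\big)\simeq\Hom_T(-,\E_T(k))$ is exact), and its $R$-socle is $\Hom_R\big(k,\Hom_T(R,\E_T(k))\big)\simeq\Hom_T(k,\E_T(k))=\Soc_T\E_T(k)=k$, one-dimensional; hence $\E_R(k)\simeq\Hom_T(R,\E_T(k))$. By Hom--tensor adjunction it follows that
$$\D_R\big(\HH^i_I(R)\big)=\Hom_R\big(\HH^i_J(T)\otimes_T R,\;\Hom_T(R,\E_T(k))\big)\;\simeq\;\Hom_T\big(R,\;\D_T(\HH^i_J(T))\big)$$
as $R$-modules. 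Proposition \ref{mo} applies to the monomial ideal $J$ in the power series ring $T$ (its proof carries over verbatim from the polynomial case, the Frobenius-type functor being $Z_i\mapsto Z_i^q$; alternatively one descends from $k[Z_1,\dots,Z_N]$ along the faithfully flat map $k[Z_1,\dots,Z_N]_{(Z)}\to T$), so $\Supp_T\big(\D_T(\HH^i_J(T))\big)=\Spec T$, and Step 5 of that proof exhibits an element of $\D_T(\HH^i_J(T))$ with zero annihilator, i.e.\ $(0)\in\Ass_T\big(\D_T(\HH^i_J(T))\big)$; thus $\D_T(\HH^i_J(T))$ contains a copy of $T$ as a $T$-submodule.

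Finally, the transfer. Applying the left-exact functor $\Hom_T(R,-)$ to an embedding $T\hookrightarrow\D_T(\HH^i_J(T))$ and using the displayed isomorphism produces an $R$-linear embedding $\Hom_T(R,T)\hookrightarrow\D_R(\HH^i_I(R))$. Since $R$ is finite free over the regular --- hence Gorenstein --- local ring $T$ with $\dim T=\dim R$, the module $\Hom_T(R,T)\simeq\omega_R$ is a canonical module of $R$, which, being a maximal Cohen--Macaulay module of rank one over the domain $R$, is a nonzero torsion-free $R$-module. Hence $(0)\in\Ass_R(\omega_R)\subseteq\Ass_R\big(\D_R(\HH^i_I(R))\big)\subseteq\Supp_R\big(\D_R(\HH^i_I(R))\big)$, and because $R$ is a domain and $\Supp$ is closed under specialization, $\Supp_R\big(\D_R(\HH^i_I(R))\big)=\Spec R$. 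The conceptual heart is the base-change identity $\D_R(\HH^i_I(R))\simeq\Hom_T(R,\D_T(\HH^i_J(T)))$; the points I expect to require the most care are checking that Proposition \ref{mo} is genuinely available over the complete ring $T$, and that the embedded copy of $\omega_R$ is torsion-free, so that it contributes the generic point to $\Ass_R(\D_R(\HH^i_I(R)))$.
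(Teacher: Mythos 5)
Your proposal is correct and follows essentially the same route as the paper: reduce to the complete case, realize $R$ as a finite free module over $R_0=k[[\underline{x},\underline{y}]]$ via Cohen's structure theorem and Auslander--Buchsbaum, use flat base change and the adjunction $\D_R(\HH^i_I(R))\simeq\Hom_{R_0}(R,\D_{R_0}(\HH^i_J(R_0)))$ (which you prove rather than cite from Hellus--St\"uckrad), and then apply $\Hom_{R_0}(R,-)$ to the embedding $R_0\hookrightarrow\D_{R_0}(\HH^i_J(R_0))$ coming from the proof of Proposition \ref{mo}. The only deviation is the final step, where the paper embeds $R$ itself into $\Hom_{R_0}(R,R_0)$ via a splitting and an integrality/dimension argument, while you simply observe that $\Hom_{R_0}(R,R_0)\simeq\omega_R$ is a nonzero torsion-free module over the domain $R$, so $(0)\in\Ass_R$ of the dual already follows --- a harmless (indeed slightly cleaner) variant of the same argument.
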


\begin{proof}We may assume that $R$ is complete.
Let $\underline{x}$ be a regular sequence such that $I$ is a monomial ideal with respect to $\underline{x}$. We extend $\underline{x}$ to a full
parameter sequence $x_1,\ldots,x_d$ of $R$.
 By Cohen's structure theorem, $R$ is module-finite over $R_0:=k[[x_1,\ldots,x_d]]$. Since $R$ is Cohen-Macaulay and by applying Auslander-Buchsbaum formula, the extension
$R_0\subset R$ is free. Note that $I=JR$ for some monomial $J\lhd R_0$. Recall that $\HH^{i}_I(R)\simeq\HH^{i}_J(R_0)\otimes_{R_0}R$, because $R$ is flat over $R_0$. Now, we use
 \cite[Page 24]{Hel3} to see $\Hom_R(\HH^{i}_I(R),\E_{R}(k))
\simeq\Hom_{R_0}(R,\Hom_{R_0}(\HH^{i}_J(R_0),\E_{R_0}(k))).$
By the proof of Proposition \ref{mo}, $R_0\hookrightarrow\D_{R_0}(\HH^{i}_J(R_0))$. Apply $\Hom_{R_0}(R,-)$ to $R_0\hookrightarrow\D_{R_0}(\HH^{i}_J(R_0))$ we have$$\Hom_{R_0}(R, R_0)\hookrightarrow\Hom_{R_0}(R,\D_{R_0}(\HH^{i}_J(R_0)))\simeq\Hom_R(\HH^{i}_I(R),\E_{R}(k)).$$

 One has $0\in\Ass_R(\Hom_{R_0}(R, R_0))$.
Indeed,  recall that $\Ass_{R_0}(\Hom_{R_0}(R, R_0))=\Supp_{R_0}(R)\cap\Ass_{R_0}(R_0)=\{0\}$.
Let $r\in R$. Multiplication by $r$ defines  a map $\mu(r)\in\Hom_{R_0}(R,R)$. Let $\rho:R\to R_0$ be the splitting map.  We define $\varphi:R\to \Hom_{R_0}(R, R_0)$ by
$\varphi(r):=\rho(\mu(r))$. This is an $R$-homomorphism. Let $\fa:=\ker(\varphi)\lhd R$. Then $\frac{R_0}{\fa\cap R_0}\hookrightarrow\frac{R}{\fa}\hookrightarrow\Hom_{R_0}(R, R_0)$.
Hence $\Ass_{R_0}(\frac{R_0}{\fa\cap R_0})\subset\Ass_{R_0}(\Hom_{R_0}(R, R_0))=\{0\}.$ This means that $\fa\cap R_0=0$. Since  the extensions are integral
we see $\dim(R)=\dim(R_0)=\dim(\frac{R_0}{\fa\cap R_0})=\dim(\frac{R}{\fa}).$ Consequently, $\fa=0$.
In view of $R\hookrightarrow\Hom_{R_0}(R, R_0)\hookrightarrow\Hom_R(\HH^{i}_I(R),\E_{R}(k))$, we see
 $\Supp(\D(\HH^{i}_I(R)))=\Spec(R)$.
\end{proof}

\section{A connection  to topology of varieties}

\begin{discussion}\label{1}     Let $k$ be a field and $ A := k[[t_1, \ldots,t_n]]$.
Let  $Y:=\V(I)$ be a closed subset of $X := \Spec (A)$, defined by an ideal $I$. Let $\mathfrak{X}$ be the formal completion of $X$ along $Y$ and let $p$ be the closed point. Here $\mathcal{H}^i_p(Y)$ denotes the local \textit{algebraic de Rham cohomology} of $Y$ at $p$. By construction, $\mathcal{H}^i_p(Y)$ is the local hypercohomology $\HH^i_p(\mathfrak{X},\Omega_{\mathfrak{X}}^{\bullet})$  where $\Omega_{\mathfrak{X}}^{\bullet}$ is the completion of  de Rham complex $\Omega_{A/k}^{\bullet}$ along with $Y$. For more details, see \cite[\S III.1]{H}.
If $y\in Y$ is  not  a closed  point, we may look at a representative  of the residue field $\zeta:k(y)\to \widehat{\mathcal{O}}_{Y,y}$. By Cohen's structure theorem
$\widehat{\mathcal{O}}_{Y,y}$ is homomorphic image of a complete regular local ring $ A $. Then we  compute $\mathcal{H}^i_y(Y):=\mathcal{H}^i_{\zeta} (\Spec(\widehat{\mathcal{O}}_{Y,y}))$ similar as  part i). For more details, see \cite[\S III.6]{H}.
Suppose $Y$ is of finite type over $k$ and it is a closed subscheme of an smooth scheme $X$. In a similar vein one can construct algebraic de Rham cohomology  of $Y$ at  a point $p\in Y$.
\end{discussion}

The following may simplify some things from \cite[Theorem 3.1]{lsw}.

\begin{proposition}\label{clsw}
Let $R := \mathbb{C}[x_1,\ldots,x_n]$, $I\lhd_h R$, and $\fm$ be the irrelevant ideal. If $k_0$ is a positive integer such that  $\Supp \HH^k_I(R)\subset \{p\}$  for each $k \geq k_0$. Then,  $\HH^k_I(R) = \HH^k_{\fm}(R)^{\mu_k}$ for each $k\geq k_0$, where $\mu_k:=\dim_{\mathbb{C}}\HH^{k}_{\sing,p} (\V(I)_h,\mathbb{C})$.
\end{proposition}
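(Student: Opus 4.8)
The plan is to reduce the cohomology modules $\HH^k_I(R)$ for $k \geq k_0$ to injective hulls of the residue field and then count copies via local de Rham cohomology, using formal duality (Fact \ref{fd}) to translate the support hypothesis into a statement about formal cohomology. First I would observe that the hypothesis $\Supp \HH^k_I(R) \subset \{p\}$ combined with the fact that $\HH^k_I(R)$ is a finitely generated $\mathcal{D}$-module (hence holonomic, since $R$ contains $\mathbb{C}$) forces $\HH^k_I(R)$ to be an injective $R$-module supported only at $\fm$; by Matlis theory it must be a finite direct sum $\HH^k_{\fm}(R)^{\mu_k} \cong \E_R(R/\fm)^{\mu_k}$ for some nonnegative integer $\mu_k$. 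So the content is entirely in identifying $\mu_k$.

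To compute $\mu_k$, I would pass to the formal completion $\mathfrak{X}$ of $X = \Spec R$ (or of $\Spec \widehat{R}$, which does not change the modules in question since local cohomology at $\fm$ is unaffected) along $Y = \V(I)$ and invoke the comparison between local algebraic de Rham cohomology $\mathcal{H}^i_p(Y)$ and the hypercohomology of the completed de Rham complex, as recalled in Discussion \ref{1}. The key mechanism is a spectral sequence whose $E_2$ (or $E_1$) page is built from $\HH^j_p(\mathfrak{X}, \Omega^{\bullet}_{\mathfrak{X}})$: once all the sheaves $\HH^k_I(R)$ for $k \geq k_0$ are (shifted) copies of $\E_R(R/\fm)$, Fact \ref{fd} (formal duality over the Gorenstein ring $\widehat{R}$) gives $\HH^j_p(\mathfrak{X}, \mathcal{O}_{\mathfrak{X}}) \simeq \D(\HH^{n-j}_I(R))$, and the multiplicities $\mu_k$ of the injective hull appear as dimensions of $\mathbb{C}$-vector spaces $\HH^j_p(\mathfrak{X}, \mathcal{O}_{\mathfrak{X}})$-to-$\D$-of-$\E$ contributions. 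Running the de Rham spectral sequence with these $E_1$-terms and matching against the known computation of $\mathcal{H}^*_p(Y)$ in terms of the topological (singular) cohomology $\HH^{k}_{\sing, p}(\V(I)_h, \mathbb{C})$ of the analytic germ — which is the content that Discussion \ref{1} and the de Rham comparison theorem supply — identifies $\mu_k = \dim_{\mathbb{C}} \HH^k_{\sing, p}(\V(I)_h, \mathbb{C})$ in the stable range $k \geq k_0$.

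Concretely the steps are: (1) show each $\HH^k_I(R)$, $k \geq k_0$, is injective and hence $\cong \E_R(R/\fm)^{\mu_k}$; (2) compute $\HH^j_{\fm}(\HH^k_I(R))$, which is $\HH^k_I(R)$ itself when $j = 0$ and vanishes otherwise, so the hypercohomology spectral sequence $\HH^i_{\fm}(\HH^j_I(R)) \Rightarrow \HH^{i+j}_{\fm}(R)$ — or rather the version for the de Rham complex — degenerates enough in the range $k \geq k_0$; (3) apply Fact \ref{fd} and Discussion \ref{1} to express local de Rham cohomology via the completed de Rham complex on $\mathfrak{X}$; (4) invoke the algebraic-to-analytic comparison identifying $\mathcal{H}^i_p(Y)$ with $\HH^i_{\sing, p}(\V(I)_h, \mathbb{C})$; (5) chase the resulting identification of multiplicities. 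The main obstacle will be step (2)–(3): controlling the de Rham (hyper)cohomology spectral sequence in the stable range without assuming that \emph{all} the $\HH^k_I(R)$ are nice — only those with $k \geq k_0$ — so one must carefully track which differentials into and out of the stable range can be nonzero, and argue that the "bad" low-degree part of the de Rham complex does not interfere with the identification of $\mu_k$ for $k \geq k_0$. This is exactly the kind of bookkeeping that \cite[Theorem 3.1]{lsw} carries out, and the claim here is that passing through formal duality (Fact \ref{fd}) streamlines it; the write-up should make that streamlining explicit rather than reproducing the full spectral-sequence analysis.
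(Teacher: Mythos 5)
Your overall strategy points in the same direction as the paper's proof (formal duality plus the algebraic--analytic comparison for local de Rham cohomology), and your step (1) is fine: holonomicity plus $\Supp \HH^k_I(R)\subset\{p\}$ does give $\HH^k_I(R)\simeq \E_R(R/\fm)^{\mu_k}$ with $\mu_k<\infty$, which is an acceptable alternative to the paper's route (the paper instead deduces artinianness afterwards, from finite-dimensionality of $\mathcal{H}^j_p(Y)$). But the actual content of the proposition is the identification of the multiplicity, and there your proposal has a genuine gap. Formal duality (Fact \ref{fd}) only tells you that $\D(\HH^{n-j}_I(R))\simeq \HH^j_p(\mathfrak{X},\mathcal{O}_{\mathfrak{X}})$, i.e.\ that $\HH^j_p(\mathfrak{X},\mathcal{O}_{\mathfrak{X}})$ is a free $\widehat{R}$-module of rank $\mu_{n-j}$ in the stable range; what is still needed is that this rank equals $\dim_{\mathbb{C}}\mathcal{H}^j_p(Y)$, i.e.\ the isomorphism $\widehat{R}\otimes_{\mathbb{C}}\mathcal{H}^j_p(Y)\simeq \HH^j_p(\mathfrak{X},\mathcal{O}_{\mathfrak{X}})$ in the relevant degrees. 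This is exactly the step you defer to ``controlling the de Rham hypercohomology spectral sequence in the stable range,'' which you acknowledge as the main obstacle but never carry out; note also that the differentials of that spectral sequence are only $\mathbb{C}$-linear, not $\widehat{R}$-linear, so the degeneration you need is not a formal consequence of the $E_1$-terms being free $\widehat{R}$-modules. As written, the heart of the proof is missing.

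The paper closes precisely this gap by quoting Ogus \cite[Theorem 2.3]{O} (Fact i) in the proof): under the hypothesis $\Supp\HH^i_J(A)\subset\{p\}$ for all $i>n-s$, the natural maps $A\otimes_k\mathcal{H}^j_p(\mathbf{Y})\to\HH^j_p(\mathfrak{X},\mathcal{O}_{\mathfrak{X}})$ are isomorphisms for $j<s$; taking $s=n-k_0+1$ this is exactly the stable-range statement your spectral-sequence bookkeeping would have to reprove. With that in hand the argument is a short chain: excision $\mathcal{H}^j_p(Y)\simeq\mathcal{H}^j_p(\Spec(\widehat{\mathcal{O}}_{Y,p}))$, Ogus, formal duality, the compatibility $\Hom_R(\HH^j_I(R),\E_R(\mathbb{C}))\simeq\Hom_{\widehat R}(\HH^j_{\widehat I}(\widehat R),\E_{\widehat R}(\mathbb{C}))$, a double Matlis dual of the (now artinian) module, and finally the comparison theorem \cite[IV.3.1]{H} to replace $\mathcal{H}^k_p(Y)$ by $\HH^k_{\sing,p}(Y_h,\mathbb{C})$. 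So either cite Ogus's theorem explicitly at your step (2)--(3), or genuinely carry out the bookkeeping of \cite[Theorem 3.1]{lsw}; merely asserting that formal duality ``streamlines'' it does not constitute a proof. You should also spell out the small completion steps (flat base change $\HH^k_I(R)\otimes_R\widehat R\simeq\HH^k_{\widehat I}(\widehat R)$ and the Matlis-dual compatibility above), which your sketch passes over.
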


\begin{proof}
By completion we mean completion with respect to $\fm$-adic topology. Remark that $\widehat{R}$ is local.  It is easy to see
$\Hom_R(\HH^{j}_{I}(R),\E_R(\mathbb{C}))\simeq\Hom_{\widehat{R}}(\HH^{j}_{I}(\widehat{R}), \E_{\widehat{R}}(\mathbb{C}))
\quad(\ast)$.
We set $Y:=\V(I)$ and we look at it as a closed subscheme of the smooth scheme $X:=\Spec(R)$. Recall that $Y$ is of finite type over  $k$.
By the excision theorem (see \cite[\S III.3]{H}) we have $\mathcal{H}^j_q(Y)\simeq\mathcal{H}^j_q(\Spec(\widehat{\mathcal{O}}_{Y,q}))\quad(\dag)$.
Recall from \cite[Theorem 2.3]{O}:
\begin{enumerate}
\item[Fact] i):  Let $ A := k[[t_1,\ldots,t_n]]$ and let $\textbf{Y}:=\V(J)$ be a closed subset of $\textbf{X} := \Spec (A)$ and $p$ be the closed point.
 Assume $ s$ is an integer such that $\Supp \HH^i_J(A)\subset \{p\}$ for all for all $i > n - s$. Then there are natural maps:
$A \otimes_ k \mathcal{H}^j_p(\textbf{Y})\to\HH^j_p(\mathfrak{X},\mathcal{O}_{\mathfrak{X}})$ which are isomorphisms for $j <s$ and injective for $j =s$.
\end{enumerate}Set $s:=n-k_0+1$ and $\lambda_j:=\dim_{\mathbb{C}}\HH^j_p(\mathfrak{X},\mathcal{O}_{\mathfrak{X}})$.
We use Fact i) to see$$
\widehat{R} \otimes_\mathbb{C} \mathcal{H}^{j}_q(Y)
\stackrel{(\dag)}\simeq\widehat{R} \otimes_\mathbb{C} \mathcal{H}^{j}_q(\Spec(\widehat{\mathcal{O}}_{Y,q}))\stackrel{i)}\simeq\HH^{j}_q(\mathfrak{X},\mathcal{O}_{\mathfrak{X}})
\stackrel{\ref{fd}}\simeq\D(\HH^{n-j}_{\widehat{I}}(\widehat{R}))
\stackrel{(\ast)}\simeq\D(\HH^{n-j}_{I}(R)).$$Recall that $\mathcal{H}^{n-j}_p(Y)$ is a finite-dimensional $\mathbb{C}$-vector space.
Conclude that  $\HH^{n-j}_{\widehat{I}}(\widehat{R})$ is artinian. This implies that $\HH^{n-j}_{I}(R)$ is artinian  as an $R$-module (and $\widehat{R}$-module).
Also, $\HH^{n-j}_{I}(R)\simeq\HH^{n-j}_{I}(R)\otimes_R\widehat{R}\simeq\HH^{n-j}_{\widehat{I}}(\widehat{R})$.
Take another duality to see
$\bigoplus_{\lambda_j}\HH^n_{\fm}(R)\simeq \bigoplus_{\lambda_j}\E_R(R/ \fm)\simeq\D(\widehat{R} \otimes_\mathbb{C} \mathcal{H}^j_p(Y))\simeq\D(\D(\HH^{n-j}_{\widehat{I}}(\widehat{R})))\simeq\HH^{n-j}_{I}(R).$ This in turns
equivalent with $ \bigoplus_{\lambda_{n-k}}\HH^n_{\fm}(R)\simeq\HH^{k}_{I}(R)$ for all $k\geq k_0$.  By comparison theorem (see \cite[IV.3.1]{H}), $\mathcal{H}^{k}_p(Y)\simeq\HH^{k}_{ \sing,p} (Y_h,\mathbb{C})$.
Therefore, $\bigoplus_{\mu_k}\HH^n_{\fm}(R) \simeq\HH^{k}_{I}(A)$ for all $k\geq k_0$.
\end{proof}

Let $ A := k[[t_1,\ldots,t_n]]$ and let $\textbf{Y}:=\V(J)$ be a closed subset of $\textbf{X} := \Spec (A)$ and $p$ be the closed point.
 Ogus proved that de Rham depth of $Y$ is $n-\cd(J)$. Definition of de Rham depth involved in general (not necessarily closed) points.
 We ask:
When is $\mathcal{H}^{n-\cd(J)}_p(\textbf{Y})\neq 0$?
In the situation for which the answer is positive, one can show   that $\Supp(\D(\HH^{\cd(J)}_{J}(A)))=\Spec(A).$
For example,
there are
 situations for which $\HH^{\cd(J)}_{J}(A) $ is artinian (resp. is not artinian).
 In the case $\HH^{\cd(J)}_{J}(A) $ is artinian it follows that
 $\HH^{\cd(J)}_{J}(A)$ is injective and so  $\D(\HH^{\cd(J)}_J(A)) $
 is flat. This property is stronger than $\Supp(\D(\HH^{\cd(J)}_{J}(A)))=\Spec(A).$
\begin{example}

 i)  Let
$R:=k[[x_1,\ldots,x_4]]$ and $\fa=(x_1, x_2)\cap(x_3, x_4)$.
 Hartshorne proved that $\HH^{3}_{\fa}(R)\simeq\HH^{4}_{\fm}(R)$ (see \cite[Example 3]{Hcd}). In view of
 Fact  i) in Proposition \ref{clsw} we have  $$R \otimes_ k \mathcal{H}^1_p(\V(\fa))\stackrel{\simeq}\lo\HH^1_p(\mathfrak{X},\mathcal{O}_{\mathfrak{X}})=\D(\HH^3_{\fa}(R))=\D(\HH^{4}_{\fm}(R))=\D(\E_R(k))=R.$$
 From this we get that $\mathcal{H}^1_p(\V(\fa))=k$. In particular, $\D(\HH^3_{\fa}(R))$ is free and finitely generated.

 ii)  Let
$R:=k[[x_1,x_2, x_3]]$ and $\fa=(x_1)\cap(x_2, x_3)$.
Recall from  \cite[Example 2.9]{Hel1} that $\HH^2_{\fa}(R)=\E_R(\frac{R}{(x_2,x_3)})$.
In particular, $\D(\HH^2_{\fa}(R))$ is flat. If it were be finitely generated then we should
had $\D(\D(\HH^2_{\fa}(R)))$ is artinian. On the other hand, $\E_R(\frac{R}{(x_2,x_3)})\simeq\HH^2_{\fa}(R)\hookrightarrow\D(\D(\HH^2_{\fa}(R))).$ So
$\E_R(\frac{R}{(x_2,x_3)})$ is artinian. This  is a  contradiction. In particular,  $\D(\HH^2_{\fa}(R))$ is flat but not finitely generated.

 iii)  Let
$R:=k[[x_1,x_2]]$ and $\fa=(x_1)$.
It follows from the definition that $\id(\HH^1_{\fa}(R))=1$ and so flat
dimension of $\D(\HH^1_{\fa}(R))$ is one. In particular,  $\D(\HH^1_{\fa}(R))$ is not flat, but
its support is the prime spectrum.
\end{example}

\begin{corollary}\label{lci}Let $R:=\mathbb{Q}[x_1,\ldots,x_n]_{(x_1,\ldots,x_n)}$  and $\fp\in \Spec(R)$ be locally CI.
 Then $\Supp(\D(\HH^i_{\fp}(R)))=\Spec(R)$ provided  $\HH^i_{\fp}(R)\neq 0$.
\end{corollary}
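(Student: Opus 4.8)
Write $h:=\Ht(\fp)$. The plan is to split on the cohomological index $i$. The case $\fp=\fm$ is trivial (it is the maximal-ideal case already isolated in Proposition \ref{simon}), so assume $\dim R/\fp>0$. Since $R$ is Cohen--Macaulay, $\HH^i_{\fp}(R)=0$ for $i<h=\grade(\fp)$, so only $i=h$ and $i>h$ have to be treated, and for $i=h$ there is nothing to do: that case is exactly Proposition \ref{simon}.

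Now suppose $i>h$ and $\HH^i_{\fp}(R)\ne 0$; this is where the hypothesis enters. For every $\fq\in\V(\fp)\setminus\{\fm\}$ the localization $\fp_{\fq}$ is generated by a regular sequence of length $h=\Ht(\fp_{\fq})$, hence $\HH^j_{\fp_{\fq}}(R_{\fq})=0$ for every $j\ne h$ (it vanishes below $h$ by the grade and above $h$ by the number of generators). Since local cohomology commutes with localization, $\HH^i_{\fp}(R)_{\fq}=0$ for all non-maximal $\fq$, so $\Supp(\HH^i_{\fp}(R))\subseteq\{\fm\}$ and $\dim_R(\HH^i_{\fp}(R))=0$. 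By Lyubeznik's inequality \cite[Corollary 3.6]{lin}, $\id_R(\HH^i_{\fp}(R))\le\dim_R(\HH^i_{\fp}(R))=0$; thus $\HH^i_{\fp}(R)$ is an injective $\fm$-torsion module, so $\HH^i_{\fp}(R)\cong\E_R(R/\fm)^t$ with $t$ finite (finiteness of the Bass numbers of local cohomology of regular rings; finiteness is in any case inessential for the conclusion). Dualizing gives $\D(\HH^i_{\fp}(R))\cong\widehat{R}^{\,t}$, exactly as in the proof of Lemma \ref{CD1}, and therefore $\Supp(\D(\HH^i_{\fp}(R)))=\Spec(R)$.

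Along the way one should record the ``Fact A)'' invoked in the proof of Proposition \ref{simon}: for $\fp$ locally CI of height $h$ with $\dim R/\fp>0$ one has $\dim_R(\HH^h_{\fp}(R))=\id_R(\HH^h_{\fp}(R))=\dim R/\fp$. The dimension equality follows from $\Supp(\HH^h_{\fp}(R))=\V(\fp)$: the inclusion $\subseteq$ is automatic, while for $\supseteq$ one notes that for $\fq\in\V(\fp)\setminus\{\fm\}$ the module $\HH^h_{\fp_{\fq}}(R_{\fq})$ is the top non-vanishing local cohomology of the complete intersection $\fp_{\fq}$, hence nonzero, so $\V(\fp)\setminus\{\fm\}\subseteq\Supp(\HH^h_{\fp}(R))$ and one passes to the closure (legitimate since $\dim R/\fp>0$). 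Lyubeznik's bound then gives $\id_R(\HH^h_{\fp}(R))\le\dim R/\fp$, and for the reverse inequality one localizes: at $\fq\in\V(\fp)\setminus\{\fm\}$ the spectral sequence $\HH^p_{\fq R_{\fq}}(\HH^j_{\fp_{\fq}}(R_{\fq}))\Rightarrow\HH^{p+j}_{\fq R_{\fq}}(R_{\fq})$ collapses ($j=h$ is the only nonzero column) to $\HH^{\,\Ht(\fq)-h}_{\fq R_{\fq}}(\HH^h_{\fp_{\fq}}(R_{\fq}))\cong\E_{R_{\fq}}(R_{\fq}/\fq R_{\fq})\ne 0$, so $\id_{R_{\fq}}(\HH^h_{\fp_{\fq}}(R_{\fq}))\ge\Ht(\fq)-h$; combined with $\id_R(-)=\sup_{\fq}\id_{R_{\fq}}((-)_{\fq})$ this yields $\id_R(\HH^h_{\fp}(R))\ge(\dim R/\fp)-1$.

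The main obstacle is closing this last gap of one: localizing away from $\fm$ cannot detect the Bass numbers of $\HH^h_{\fp}(R)$ at $\fm$ itself, so one needs a genuine argument at the closed point to get $\id_R(\HH^h_{\fp}(R))\ge\dim R/\fp$ — precisely the content carried by the equality ``$\id=\dim$ for locally CI equidimensional ideals'' announced in the introduction, which I would quote at this point. Every other ingredient — the split on $i$, the localization and support bookkeeping, the classification of injective $\fm$-torsion modules, and Matlis duality — is routine.
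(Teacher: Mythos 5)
Your first two paragraphs are correct and are essentially the paper's own proof: the case $i=\Ht(\fp)$ is delegated to Proposition \ref{simon}, and for $i>\Ht(\fp)$ the paper likewise localizes at $\fq\in\V(\fp)\setminus\{\fm\}$ (using $\mu(\fp_\fq)=\Ht(\fp)<i$) to get $\Supp(\HH^i_{\fp}(R))\subseteq\{\fm\}$, concludes injectivity via Fact \ref{clsw}.i) or Lyubeznik's inequality, and finishes because the Matlis dual is then nonzero and flat, hence has full support. The ``main obstacle'' of your last two paragraphs is not part of this corollary: the equality $\id_R(\HH^h_{\fp}(R))=\dim_R(\HH^h_{\fp}(R))$ is used only inside the proof of Proposition \ref{simon}, which you (like the paper) quote as a black box, and in the paper that Fact~A) is not re-proved but cited --- it is Dorreh's theorem \cite[Proposition 3.5]{D}, valid for any ideal in cohomological degree equal to its height, with no locally CI or equidimensionality hypothesis --- so there is no gap for you to close and no circularity in invoking Proposition \ref{simon} for the $i=\Ht(\fp)$ case.
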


\begin{proof} First we may assume that
$i>\grade(\fp)$.
Let $\fq\in \V(\fp)\setminus\{\fm\}$ and let $j\geq i$. Since $\mu(\fp_{\fq})< i$ we see
$\HH^j_{\fp}(R)_\fq\simeq\HH^j_{\fp_{\fq}}(R_{\fq}) =0$. Trivially, $\HH^j_{\fp}(R)_\fq =0$ when $\fq\nsubseteq \fp$. Thus,  $\Supp \HH^j_{\fp}(R)\subset \{p\}$  for each $j \geq i$.
We deduce from Fact \ref{clsw}.i)  that $ \HH^i_\fp(R) $ is injective (also, this follows from  Lyubeznik's inequality). Recall  that  $ \HH^i_\fp(R) \neq0$.  In
conjunction with Matlis duality, $\D(\HH^i_\fp(R))$ is nonzero and flat.
Consequently, $\Supp(\D(\HH^i_\fp(R)))=\Spec(R)$.
The claim in the case $i=\grade(\fp)$ is subject of  Proposition \ref{simon}.
\end{proof}

\begin{lemma}\label{6gcm}Let $A$ be a regular local ring  essentially of finite type over a field and $J\lhd A$. Suppose $\dim A < 7$.
Then   $\HH^i_J(A) $ is artinian for all $i> \Ht(J)$ provided $A/J$  is  generalized Cohen-Macaulay.
\end{lemma}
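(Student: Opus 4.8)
The goal is to show that for a regular local ring $A$ essentially of finite type over a field, with $\dim A < 7$ and $A/J$ generalized Cohen-Macaulay, the local cohomology $\HH^i_J(A)$ is artinian for all $i > \Ht(J)$. I would first reduce to the case where $A$ is complete (hence $A \cong k[[x_1,\ldots,x_d]]$ by Cohen's structure theorem, possibly after a faithfully flat base change that preserves artinian-ness of the relevant modules via the isomorphism $\HH^i_J(A)\otimes_A\widehat A\simeq\HH^i_{\widehat J}(\widehat A)$), and set $d:=\dim A$ and $h := \Ht(J)$. The strategy is to show that $\Supp \HH^i_J(A)\subseteq\{\fm\}$ for each $i>h$, since for a regular ring any local cohomology module with support in the closed point is automatically injective (by Lyubeznik's results on injective dimension bounded by dimension of support, or by Huneke-Sharp/Lyubeznik finiteness of Bass numbers), hence artinian. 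Equivalently, by the second vanishing-type criterion it suffices to control $\cd(J_\fq, A_\fq)$ for every non-maximal prime $\fq\in\V(J)$.

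**Key steps.** Let $\fq\in\V(J)\setminus\{\fm\}$. We need $\HH^i_{J_\fq}(A_\fq)=0$ for all $i>h=\Ht(J)$. Now $A_\fq$ is a regular local ring of dimension $\dim A_\fq = \Ht(\fq)<d\le 6$, so $\dim A_\fq\le 5$. The ring $(A/J)_\fq$ is generalized Cohen-Macaulay — here one uses that localization of a generalized Cohen-Macaulay ring at a prime in its support is again generalized Cohen-Macaulay (this is a standard fact, following from the finiteness of $\bigoplus_{i<\dim}\HH^i_\fm(A/J)$ together with the behaviour of local cohomology under localization), and moreover $(A/J)_\fq$ is in fact Cohen-Macaulay whenever $\fq$ is not the unique non-Cohen-Macaulay point, so the truly problematic primes $\fq$ form a finite set. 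For such $\fq$ the key input is a bound on cohomological dimension for generalized Cohen-Macaulay quotients in low dimension: if $B$ is regular local of dimension $n\le 5$ and $B/J'$ is generalized Cohen-Macaulay, then $\cd(J', B) = \Ht(J')$, i.e. $\HH^i_{J'}(B)=0$ for $i>\Ht(J')$. This is precisely the ``four situations'' statement referenced in the introduction (case iv): for $\dim B<7$ and $B/J'$ generalized Cohen-Macaulay one has $\f_{J'}(B)=\q_{J'}(B)=\Ht(J')$; I would invoke that statement (or reprove the relevant instance $\cd(J')=\Ht(J')$ directly) applied to $B=A_\fq$. Combining, $\HH^i_{J_\fq}(A_\fq)=\HH^i_{J'}(B)=0$ for all $i>\Ht(J')\le h$, giving $\Supp\HH^i_J(A)\subseteq\{\fm\}$ for $i>h$, and artinian-ness follows.

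**The main obstacle.** The delicate point is establishing the cohomological-dimension vanishing $\cd(J',B)=\Ht(J')$ for $B$ regular local of dimension at most $5$ (really, one only needs dimension $<7$ minus one, but the genuinely new cases are small) with $B/J'$ generalized Cohen-Macaulay; this is where the dimension hypothesis $\dim A<7$ is actually used, and it is not formal — it rests on the connectedness/vanishing theorems (Hartshorne-Lichtenbaum, Ogus-Hartshorne on de Rham depth, and the ``second vanishing theorem'' circle) that control when $\HH^i_{J'}(B)$ can be nonzero above the height. I would therefore structure the proof so that the genuinely hard analytic/geometric content is isolated into that one lemma on $\cd$, quoted from (or proved alongside) the ``four situations'' result in the introduction, and keep the rest of the argument — the reduction to complete rings, the localization and base-change bookkeeping, and the passage from ``support at $\fm$'' to ``artinian'' — routine. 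A secondary subtlety is checking that ``essentially of finite type over a field'' is preserved under the completion and localization steps in the precise form needed to apply the cited de Rham / Ogus results, which require excellent (or complete regular local containing a field) hypotheses; these are satisfied in our setting but should be stated carefully.
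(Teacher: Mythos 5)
Your overall skeleton agrees with the paper's: reduce to showing $\Supp(\HH^i_J(A))\subseteq\{\fm\}$ for $i>\Ht(J)$, then conclude artinian-ness from finiteness results for local cohomology of regular rings (the paper quotes Ogus in characteristic zero and Hartshorne--Speiser in characteristic $p$; your Lyubeznik/Huneke--Sharp route is equivalent). But the heart of the lemma is precisely the step you leave unproved, and your proposed justification for it does not work. You assert that for $B$ regular local of dimension at most $5$ with $B/J'$ generalized Cohen--Macaulay one has $\cd(J')=\Ht(J')$, and you propose to quote this from situation iv) of the introduction. That statement is false: for $B=k[[x_1,\ldots,x_4]]$ and $J'=(x_1,x_2)\cap(x_3,x_4)$ the quotient $B/J'$ is generalized Cohen--Macaulay of depth $1$, $\Ht(J')=2$, yet $\HH^3_{J'}(B)\simeq\HH^4_{\fm}(B)\neq 0$ (Hartshorne's skew lines, which the paper itself lists as an example of situation iv)). The point is that situation iv) asserts $\q_{J'}=\f_{J'}=\Ht(J')$, i.e.\ artinian-ness (and finite generation) in the appropriate range, not vanishing; you have conflated $\q$ with $\cd$. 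Moreover the citation would be circular: in the paper, situation iv) is Corollary \ref{lgcm}, which is deduced \emph{from} Lemma \ref{6gcm}.

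What is actually needed at a non-maximal $\fq\in\V(J)$ (where $(A/J)_{\fq}$ is genuinely Cohen--Macaulay, not merely generalized Cohen--Macaulay --- your remark about a ``finite set of problematic primes'' is off, since generalized Cohen--Macaulay already gives Cohen--Macaulayness on the whole punctured spectrum) is the bound $\cd(J_{\fq})\leq\Ht(J)$, and this does not follow formally: it is obtained in the paper by a case analysis on $\Ht(J)$ using specific vanishing theorems --- height $1$: unmixedness forces $J_{\fq}$ principal; height $2$: $(A/J)_{\fq}$ is $\Se_3$ and Dao--Takagi gives $\cd(J_{\fq})\leq\dim A_{\fq}-3$; height $3$: $\depth\geq 2$ and Varbaro's bound gives $\cd(J_{\fq})\leq\dim A_{\fq}-2$; height $4$: Hartshorne--Lichtenbaum; height $\geq 5$: Grothendieck vanishing plus artinian-ness of the top local cohomology. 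None of these inputs appears in your proposal, and your plan to first pass to the completion is in tension with the only tool that can close the gap in characteristic zero: the Dao--Takagi bound is the reason the hypothesis ``essentially of finite type over a field'' is in the statement, and it is applied to the localizations $A_{\fq}$, which remain essentially of finite type, whereas a completed ring need not be. So as written the proposal has a genuine gap exactly where the dimension hypothesis $\dim A<7$ must do its work.
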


\begin{proof}
Indeed, we assume $\dim A=6$. By the mentioned result of Ogus (resp. Hartshorne-Speiser) in the zero characteristic (resp. in the prime characteristic) case we need to show
$\Supp(\HH^i_ J(A))\subseteq \{\fm\}$ for all $i> \Ht(J)$.
In this regard, first we deal with the case $\Ht(J)=1$. Since $A/J$ is locally Cohen-Macaulay,  $J_{\fp}$ is unmixed for all $\fp\in \Spec(A)\setminus\{\fm\}$. By this,
$J_{\fp}$ is  principal for all $\fp\in \Spec(A)\setminus\{\fm\}$. Thus $\HH^i_J(A)_{\fp}=0$  for all $\fp\in \Spec(A)\setminus\{\fm\}$ and for all $i>1$.  Consequently, $\Supp(\HH^i_ J(A))\subseteq \{\fm\}$ for all $i> 1$. If $\Ht(J)=2$, then  $\frac{A_{\fp}}{J_{\fp}}$ is $\Se_3$ for any height five prime ideal $\fp\in \V(J)$. Combine this along with \cite[Theorem 3.8(2)]{dt} to see $\cd(J_{\fp})\leq5-3$.
Thus $\HH^i_ J(A)_{\fp}=0$  for all $\fp\in \Spec(A)\setminus\{\fm\}$ and for all $i>2$, i.e., $\Supp(\HH^i_J(A))\subseteq \{\fm\}$ for all $i> 2$.  The next case is $\Ht(J)=3$.
Then  $\frac{A_{\fp}}{J_{\fp}}$ is $\Se_2$ for any height five prime ideal $\fp\in \V(J)$.   Recall from \cite[Proposition 3.1]{Ma}:\begin{enumerate}
\item[Fact]A) Let $B$ be a regular local ring containing a field and $\fa\lhd B$.
If $\depth(B/\fa) \geq 2$, then $\cd(\fa) \leq \dim B-2$.
\end{enumerate}In the light of this fact $\cd(J_{\fp})\leq5-2$. It turns out that $\Supp(\HH^i_J(A))\subseteq \{\fm\}$ for all $i> 3$.
Suppose now that $\Ht(J)=4$. Due to $(HLVT)$ we know $\HH^5_J(A)_{\fp}=0$ for any height five prime ideal $\fp$.
 This indicates that $\Supp(\HH^i_J(A))\subseteq \{\fm\}$ for all $i> 4$.
Eventually, we assume  $\Ht(J)\geq5$. Let $i> \Ht(J)$. By $(GVT)$, $\HH^{>6}_J(A)=0$. We establish the desired claim if we recall that $\HH^{6}_J(A) $  is artinian.
\end{proof}

\begin{corollary}\label{6}
Let $R:=\mathbb{Q}[x_1,\ldots,x_6]_{(x_1,\ldots,x_6)}$  and $\fp\in \Spec(R)$ be generalized Cohen-Macaulay.
 Then $\Supp(\D(\HH^i_{\fp}(R)))=\Spec(R)$ provided  $\HH^i_{\fp}(R)\neq 0$.
\end{corollary}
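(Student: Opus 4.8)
The plan is to combine Lemma~\ref{6gcm} with the flatness argument already used in Corollary~\ref{lci}. Set $d=\dim R=6$. Since the statement is trivial when $\fp=\fm$, assume $\dim R/\fp>0$, so in particular $\HH^i_{\fp}(R)\neq0$ forces $i>\grade(\fp)=\Ht(\fp)$ (the lowest non-vanishing local cohomology is in degree $\Ht(\fp)$ and, being the only obstruction at that spot, its dual is handled separately; see below). First I would invoke Lemma~\ref{6gcm}: because $R$ is regular essentially of finite type over $\mathbb{Q}$, $\dim R<7$, and $R/\fp$ is generalized Cohen-Macaulay, we get that $\HH^i_{\fp}(R)$ is artinian for every $i>\Ht(\fp)$.

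Once artinianness is in hand, the rest is immediate. An artinian module over the complete-able regular local ring $R$ (or rather over $\widehat R$, with $\D(-)=\Hom_R(-,\E_R(k))$) that is also a local cohomology module $\HH^i_{\fp}(R)$ is automatically injective: artinian modules over a noetherian local ring are exactly the submodules of finite direct sums of copies of $\E_R(k)$, and being a local cohomology module it is $\fp$-torsion, hence $\fm$-torsion once its support is $\{\fm\}$, so $\HH^i_{\fp}(R)\simeq\E_R(k)^{\oplus t}$ for some $t\ge1$ (here $t\ge1$ since $\HH^i_{\fp}(R)\neq0$). Taking Matlis duals, $\D(\HH^i_{\fp}(R))\simeq\widehat R^{\,t}$, which is flat and nonzero; the support of a nonzero flat module over $R$ is all of $\Spec(R)$. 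This disposes of every $i>\Ht(\fp)$.

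It remains to treat $i=\Ht(\fp)=\grade(\fp)$. Here $\HH^{\grade(\fp)}_{\fp}(R)$ need not be artinian, but this case is exactly Proposition~\ref{simon}: for $R=R_6=\mathbb{Q}[x_1,\ldots,x_6]_{(x_1,\ldots,x_6)}$ and $\fp$ of height $h$ we already know $\Supp(\D(\HH^h_{\fp}(R)))=\Spec(R)$. So I would simply cite Proposition~\ref{simon} to close this final spot. Putting the two ranges together gives $\Supp(\D(\HH^i_{\fp}(R)))=\Spec(R)$ whenever $\HH^i_{\fp}(R)\neq0$.

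The only real point requiring care is the hypothesis-checking for Lemma~\ref{6gcm}: one must know that $R/\fp$ being generalized Cohen-Macaulay (a hypothesis on a prime quotient) is preserved/meaningful in the essentially-of-finite-type setting, and that $\grade(\fp)=\Ht(\fp)$ so the ranges $i=\Ht(\fp)$ and $i>\Ht(\fp)$ exhaust all relevant $i$ with $\HH^i_{\fp}(R)\neq0$ by Grothendieck vanishing below $\grade$. Everything else is a direct application of results already proved in the excerpt, so I expect no substantial obstacle beyond this bookkeeping.
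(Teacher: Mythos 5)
Your overall plan coincides with the paper's: use Lemma~\ref{6gcm} to control the degrees $i\neq\Ht(\fp)$, show the Matlis dual there is flat and nonzero, and settle the remaining degree $i=\Ht(\fp)$ by Proposition~\ref{simon}. But the pivotal step is justified incorrectly. You claim that an artinian, $\fm$-torsion local cohomology module is ``automatically injective'' because artinian modules are submodules of finite direct sums of copies of $\E_R(k)$. That implication is false as stated: being embeddable in $\E_R(k)^{t}$ plus $\fm$-torsion does not force the module to equal $\E_R(k)^{t}$ --- the residue field $k$ itself is artinian and $\fm$-torsion, yet it is not injective and $\D(k)=k$ has support $\{\fm\}$, not $\Spec(R)$. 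So artinianness alone cannot deliver flatness of the dual; without injectivity your argument for the range $i>\Ht(\fp)$ collapses.

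The statement you need is true, but it is a deep theorem about local cohomology of regular rings containing a field, not a consequence of the structure theory of artinian modules: if $\Supp(\HH^i_{\fp}(R))\subseteq\{\fm\}$ then $\id_R(\HH^i_{\fp}(R))\leq\dim_R(\HH^i_{\fp}(R))=0$, whence $\HH^i_{\fp}(R)\simeq\E_R(k)^{t}$. This is Lyubeznik's bound \cite[Corollary 3.6]{lin} (used in exactly this way in Lemma~\ref{CD1} and in Step 2 of Proposition~\ref{simon}), or equivalently the Ogus-type statement recorded as Fact i) in Proposition~\ref{clsw}, which is what the paper invokes at this point of the proof of the corollary. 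Once you insert that citation in place of your artinian-module argument, the rest of your proof (flat dual has full support; $i=\Ht(\fp)$ via Proposition~\ref{simon}; lower degrees vanish since $\grade(\fp)=\Ht(\fp)$) is correct and agrees with the paper.
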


\begin{proof}
In the light of Lemma \ref{6gcm} we see
$\Supp(\HH^i_ {\fp}(R))\subseteq \{\fm\}$ for all $i\neq \Ht(\fp)$.
We use   Fact \ref{clsw}.i)  to see $\id_R(\HH^i_{\fp}(R))= \dim_R(\HH^i_{\fp}(R))$ for all $i\neq \Ht(\fp)$.
Thus, $\D(\HH^i_{\fp}(R))$ is flat provided $\HH^{i}_I(R)\neq0$ and that $i\neq \Ht(\fp)$. This shows   that $\Supp(\D(\HH^i_{\fp}(R)))=\Spec(R)$ provided  $\HH^i_{\fp}(R)\neq 0$ and
that $i\neq\Ht(\fp)$.
The claim in the case $i=\grade(\fp)=\Ht(\fp)$ is subject of  Proposition \ref{simon}. The proof is now complete.
\end{proof}

\section{   An application: computing the injective dimension}
Let $A$ be  a noetherian
ring  of   finite Krull dimension and $\fa\lhd A$. Recall from \cite{sga2} that $\f_{\fa}(M)$  is the largest integer
$n$   such that $\HH^i_{\fa}(M)$ is a finitely generated
for all $i < n$.
Recall from \cite{Hcd} that
$\q_{\fa}(A)$ is the greatest  $i$ such that $\HH^{i}_{\fa}(A)$
is not artinian.
 In the graded setting, the artinian property of $\HH^{i+1}_{\fa}(A)$ implies that the  vector spaces
$\HH^i(\Proj(A/ \fa),\mathcal{O}(n))$ are finite for all $n\in \mathbb{Z}$.
Suppose $U$ is a   scheme of finite Krull dimension and finite type over $k$.
Define the integer $\p(U)$ to be the largest integer
$n$   such that $\HH^i (U,\mathcal{F})$ is a finite-dimensional $k$-vector space
for all $i < n$, and for all locally free sheaves $\mathcal{F}$ on $U$. One can define $\q(U)$ in the similar vein, see \cite[Page 91]{H4}.
Let $Y\subset\PP^n_k$ be a non-empty closed, and
$U := \PP^n_k\setminus Y$. By \cite[Ex. III.5.8]{H4}, $\p(U) \leq \q(U)$. Its local version is:

\begin{fact}(See \cite{ADT}) \label{fir}
 Let $A$ be as above. If
$q_{\fa}(A)>0$, then  $\f_{\fa}(A)\leq \q_{\fa}(A)$.
\end{fact}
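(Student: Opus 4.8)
The plan is to reduce the claim to a comparison between the finiteness locus of local cohomology and the non-artinian locus, and then to exploit the long exact sequence relating $\HH^i_{\fa}(A)$ to the cohomology of the punctured spectrum. First I would recall that by definition $\f_{\fa}(A)$ is the least $i$ with $\HH^i_{\fa}(A)$ not finitely generated, and $\q_{\fa}(A)$ is the greatest $i$ with $\HH^i_{\fa}(A)$ not artinian; so the inequality $\f_{\fa}(A)\leq \q_{\fa}(A)$ is equivalent to saying that whenever $\HH^i_{\fa}(A)$ fails to be finitely generated for the first time at $i=f$, that module is also not artinian, so $q\geq f$.

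The key step is to argue that $\HH^{\f_{\fa}(A)}_{\fa}(A)$ is not artinian whenever $\f_{\fa}(A)=f< \infty$ and $\q_{\fa}(A)>0$. The standard way to see this is: set $f:=\f_{\fa}(A)$. By minimality, $\HH^i_{\fa}(A)$ is finitely generated for all $i<f$, hence $\fa$-torsion finitely generated, hence of finite length, in particular artinian for $i<f$. Now suppose for contradiction that $\HH^f_{\fa}(A)$ were also artinian. Then $\HH^i_{\fa}(A)$ is artinian for all $i\leq f$; combined with the definition of $\q_{\fa}(A)$, which says $\HH^i_{\fa}(A)$ is artinian for all $i>\q_{\fa}(A)$, one would need $\HH^i_{\fa}(A)$ to be non-artinian only in the range $f<i\leq \q_{\fa}(A)$. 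The contradiction is extracted by looking at the exact sequence $0\to\Gamma_{\fa}(A)\to A\to \HH^0(U,\mathcal{O}_U)\to\HH^1_{\fa}(A)\to 0$ and the isomorphisms $\HH^{i}(U,\mathcal{O}_U)\simeq\HH^{i+1}_{\fa}(A)$ for $i\geq 1$, where $U$ is the punctured spectrum (or, in the graded setting, $\Proj$); the point is that the first non-finitely-generated local cohomology governs the first infinite-dimensional sheaf cohomology on $U$, and the latter cannot be artinian-but-infinite because artinian modules on the punctured spectrum are supported away from the closed point and hence vanish there — forcing finite generation after all, a contradiction. So $\HH^f_{\fa}(A)$ is not artinian, giving $\q_{\fa}(A)\geq f=\f_{\fa}(A)$.

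The hypothesis $\q_{\fa}(A)>0$ is needed to rule out the degenerate situation where all $\HH^i_{\fa}(A)$ are artinian (then $\q_{\fa}(A)=-\infty$ or $0$ by convention) while some fails finite generation; it guarantees there is room for the comparison and that the punctured-spectrum cohomology argument is non-vacuous. I would cite \cite{ADT} for the precise form of this dichotomy, since the statement is explicitly attributed there, and merely indicate the above as the underlying mechanism.

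The main obstacle I anticipate is making the middle step rigorous without circularity: one must be careful that "artinian implies finitely generated among $\fa$-torsion modules supported only at the closed point" is used correctly, and that the inductive bookkeeping on which $\HH^i_{\fa}(A)$ are finitely generated versus artinian is consistent across the whole range $0\leq i\leq f$. Since the excerpt presents this only as Fact \ref{fir} with a reference, the cleanest route in the paper is to invoke \cite{ADT} directly and note that it is the local analogue of the inequality $\p(U)\leq\q(U)$ from \cite[Ex. III.5.8]{H4} recorded just above, rather than reproving it.
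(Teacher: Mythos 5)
The paper itself gives no proof of this Fact --- it is quoted from \cite{ADT} --- so your final suggestion to invoke \cite{ADT} directly is exactly what the paper does, and that part is fine. The sketched ``mechanism'', however, would not work. Your plan hinges on the intermediate claim that $\HH^{f}_{\fa}(A)$, $f:=\f_{\fa}(A)$, is never artinian once $\q_{\fa}(A)>0$; this is strictly stronger than the Fact and is false for the class of rings the Fact covers (any noetherian ring of finite Krull dimension). Take $A=R\times S$ with $R$ regular local of dimension $2$ with maximal ideal $\fm_R$, $S=k[[y_1,\ldots,y_5]]$, and $\fa=\fm_R\times (y_1,y_2,y_3)$. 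Since every module over a product ring splits, $\HH^i_{\fa}(A)\simeq \HH^i_{\fm_R}(R)\oplus\HH^i_{(y_1,y_2,y_3)}(S)$; hence $\HH^0_{\fa}(A)=\HH^1_{\fa}(A)=0$, $\HH^2_{\fa}(A)\simeq \E_R(R/\fm_R)$ is artinian but not finitely generated, and $\HH^3_{\fa}(A)\simeq\HH^3_{(y_1,y_2,y_3)}(S)$ is nonzero with two-dimensional support, hence not artinian. So $\f_{\fa}(A)=2\leq 3=\q_{\fa}(A)$, as the Fact predicts, yet the first non-finitely-generated module \emph{is} artinian; one can even make such an example local by gluing (a fibre product $R\times_k S$ over the residue field exhibits the same pattern). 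Thus no argument can establish your key step, and indeed your sketch never actually derives a contradiction from the configuration ``non-artinian only in the range $f<i\leq \q_{\fa}(A)$''.

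Two of the supporting assertions are also incorrect: a finitely generated $\fa$-torsion module is killed by a power of $\fa$ but need not have finite length (its support can be positive-dimensional inside $\V(\fa)$, e.g.\ $\Gamma_{\fa}(A)$), so ``finitely generated for $i<f$ hence artinian'' fails; and over a local ring artinian modules are supported \emph{only at} the closed point, not ``away from'' it, so the sentence meant to produce the contradiction on the punctured spectrum is backwards. The inequality really has to be proved at the top non-artinian index, i.e.\ in contrapositive form: assuming $\HH^i_{\fa}(A)$ is finitely generated for all $i\leq q:=\q_{\fa}(A)$, one shows $\HH^{q}_{\fa}(A)$ would then be artinian, contradicting $q>0$; that contrapositive is precisely what \cite{ADT} establishes. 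Since the paper records the statement only as a cited Fact, quoting \cite{ADT} is the right move --- but the heuristic you describe should not be offered as the reason it is true.
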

 Here, $R$ is  regular and contains a field.
In the previous sections we make use of the fundamental inequality $\id_R(\HH^i_ I(R))\leq \dim_R(\HH^i_ I(R))$.
Over certain polynomial rings this is in fact an equality, see \cite{p}.
There is a difference between the local and global case, because of examples due to Hellus even the ring is 3-dimensional.

\begin{corollary}\label{lci}
Let $R$ be a regular local ring containing  a field and $I$   locally CI. If $I$ is equi-dimensional, then \begin{equation*}
\id_R(\HH^i_ I(R))= \dim_R(\HH^i_ I(R))=\left\{
\begin{array}{rl}
\dim (\frac{R}{I}) & \  \   \   \   \   \ \  \   \   \   \   \ \text{if } i=\Ht(I)\\
0 &\  \   \   \   \   \ \  \   \   \   \   \ \text{if } i\neq\Ht(I) and \HH^i_ I(R)\neq 0,\\
-\infty & \  \   \   \   \   \ \  \   \   \   \   \ \text{otherwise }
\end{array} \right.
\end{equation*}
In particular, $\q_I(R)=\Ht(I)=\f_I(R)$ provided $\rad(I)\neq\fm$.\end{corollary}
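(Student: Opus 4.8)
The plan is to combine two inputs: the structural vanishing facts about locally complete-intersection ideals proved earlier, and Ogus/Hartshorne-Speiser type results translating ``finite support'' into artinianness of the relevant local cohomology module. Write $h:=\Ht(I)$ and $d:=\dim R$. Since $I$ is equi-dimensional, $\dim R/I = \dim R/\fp$ for every minimal prime $\fp$ of $I$, so for $i=h$ the grade case applies and the value $\dim R/I$ for $\dim_R(\HH^h_I(R))$ follows from the known computation $\dim_R(\HH^{\grade}_{\fp}(R)) = \dim R/\fp$ (the analogue of Fact A) used in Proposition \ref{simon} and Corollary \ref{lci}), while $\Supp(\D(\HH^h_I(R)))=\Spec(R)$ is exactly Proposition \ref{simon} applied locally and then globalized, or rather its equi-dimensional consequence; one must be slightly careful and either reduce to $I$ prime via Mayer--Vietoris as in Proposition \ref{3}, or invoke that over a regular local ring the cohomological behaviour at the bottom degree only depends on $\rad(I)$ and its minimal primes.

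First I would establish that $\HH^i_I(R)$ has support contained in $\{\fm\}$ for every $i>h$. This is the key local computation: for $\fq\in\V(I)\setminus\{\fm\}$, localize to get $\HH^i_I(R)_{\fq}\simeq\HH^i_{I_{\fq}}(R_{\fq})$; since $I$ is locally CI, $I_{\fq}$ is generated by $\Ht(I_{\fq})=\Ht(I)=h$ elements (here equi-dimensionality and unmixedness of CI ideals give $\Ht(I_{\fq})=h$), hence $\cd(I_{\fq})=h<i$ and the localization vanishes; for $\fq\not\supseteq I$ the localization is trivially zero. Thus $\Supp\HH^i_I(R)\subseteq\{\fm\}$ for $i>h$. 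Then, by Ogus's theorem in characteristic zero (resp. Hartshorne-Speiser in positive characteristic), equivalently by Fact \ref{clsw}.i) combined with the duality $\D(\HH^{n-j}_I(R))\simeq\widehat R\otimes_k\mathcal H^j_q(Y)$ and finite-dimensionality of the de Rham cohomology, each such $\HH^i_I(R)$ is artinian, hence injective (support a single point, zero-dimensional, with $\id\le\dim=0$ by Lyubeznik's inequality), so $\HH^i_I(R)\simeq\E_R(R/\fm)^{\mu_i}$ and $\id_R(\HH^i_I(R))=0=\dim_R(\HH^i_I(R))$; dually $\D(\HH^i_I(R))\simeq\widehat R^{\mu_i}$ is flat (or zero), so $\Supp(\D(\HH^i_I(R)))=\Spec(R)$ whenever the module is nonzero. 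This is essentially the argument of Corollary \ref{lci} (the earlier one, for prime $\fp$) carried over verbatim; the only new point is checking $\Ht(I_{\fq})=h$ uniformly, which is where equi-dimensionality is needed.

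Next I would dispose of the bottom degree $i=h$. Here $\HH^h_I(R)$ is the ``interesting'' module; by the fundamental inequality $\id_R\le\dim_R$ it suffices to prove $\id_R(\HH^h_I(R))\ge\dim R/I$ and $\Supp(\D(\HH^h_I(R)))=\Spec(R)$. For the support statement, if $I$ is prime this is precisely Proposition \ref{simon}; in general, reduce to the prime case by a Mayer--Vietoris induction on the number of minimal primes exactly as in the proof of Proposition \ref{3} or Example 2.6 --- writing $\rad(I)=\fp_1\cap\cdots\cap\fp_r$ with all $\Ht(\fp_j)=h$, the relevant sequence in degree $h$ surjects onto $\bigoplus\HH^h_{\fp_j}(R)$ up to terms coming from sums $\fp_j+\fp_k$ of height $>h$, whose contribution in the inverse limit of the $\HH^h_{\fm}$ vanishes by Cohen-Macaulayness/height reasons, so $\D(\HH^h_I(R))$ surjects onto (or contains a copy of) some $\D(\HH^h_{\fp_j}(R))$ with full support. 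For $\id_R(\HH^h_I(R))\ge\dim R/I$: localize at a prime $\fp$ minimal over $I$ with $\dim R/\fp=\dim R/I$; then $\HH^h_I(R)_{\fp}=\HH^h_{I_{\fp}}(R_{\fp})=\HH^h_{\fp R_{\fp}}(R_{\fp})\ne0$, which is the top local cohomology of a regular local ring and hence has injective dimension $0$, contributing the prime $\fp$ to the support of $\Ext^{\dim R/\fp}$-type terms; more cleanly, use that $\HH^h_I(R)$ contains a submodule (or has a subquotient) isomorphic to $\HH^h_{\fp}(R)$ whose $\dim_R$ and $\id_R$ both equal $\dim R/\fp$ by Fact A) as cited in Proposition \ref{simon}, together with the general fact $\id_R(M)\ge\id_R(N)$ is false in general but $\dim_R(M)\ge\dim_R$(subquotient) suffices combined with $\id\ge\dim$ failing --- so instead I would run the argument through the known equality $\id_R(\HH^h_{\fp}(R))=\dim R/\fp$ and a snake-type comparison. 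Finally the ``in particular'' clause: $\f_I(R)\le h$ since $\HH^h_I(R)$ is not finitely generated when $h>0$ (Corollary \ref{h}, as $\cd(I)=h>0$ when $\rad I\ne\fm$, actually $\HH^h_I(R)\ne0$ and it's not finitely generated), and $\HH^i_I(R)$ is finitely generated ($=0$) for $i<h=\grade I$, giving $\f_I(R)=h$; similarly $\q_I(R)\ge h$ because $\HH^h_I(R)$ is not artinian (its support has dimension $\dim R/I>0$ when $\rad I\ne\fm$, using $\dim_R(\HH^h_I(R))=\dim R/I$ just computed, so it cannot be artinian), while $\HH^i_I(R)$ is artinian for $i>h$ by Step one, giving $\q_I(R)=h$.

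The main obstacle I expect is the bottom-degree support statement $\Supp(\D(\HH^h_I(R)))=\Spec(R)$ for $I$ merely equi-dimensional rather than prime: Proposition \ref{simon} is stated only for primes, so one genuinely needs the Mayer--Vietoris reduction, and one must verify carefully that the ``error terms'' $\HH^{\bullet}_{\fp_j+\fp_k}$ and their inverse limits do not interfere --- this requires that $\Ht(\fp_j+\fp_k)>h$ (true by equi-dimensionality and the fact that distinct height-$h$ primes cannot have a height-$h$ intersection-ideal that is still height $h$ with a common contained prime, i.e. $\fp_j+\fp_k$ has height $\ge h+1$), and that passing to the inverse limit over Frobenius-type or ordinary powers kills the Cohen-Macaulay pieces in the wrong degree, exactly as engineered in the proof of Example 2.6. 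A secondary subtlety is making the inequality $\id_R(\HH^h_I(R))\ge\dim R/I$ rigorous: the clean route is to avoid submodule arguments and instead localize, using that $\id$ can be checked locally and that $\HH^h_I(R)_{\fp}\simeq\E_{R_{\fp}}(R_{\fp}/\fp R_{\fp})^{t}$ at a minimal prime $\fp$ of maximal coheight forces $\fp\in\Ass(\HH^h_I(R))$ hence $\depth$-type/Bass-number considerations give a nonvanishing Bass number $\mu^{\dim R/\fp}_{\fm}$ --- but the slickest is simply that $\D(\HH^h_I(R))$ has full support (just proved) and is complete, combined with $\HH^h_I(R)$ containing $\HH^h_{\fp}(R)$ as a subquotient whose injective dimension equals $\dim R/\fp=\dim R/I$, and the already-established upper bound $\id\le\dim$ then forces equality throughout.
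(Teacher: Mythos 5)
Your treatment of the degrees $i>\Ht(I)$ coincides with the paper's: equi-dimensionality gives $\Ht(I_{\fq})=\Ht(I)$ for $\fq\in\V(I)\setminus\{\fm\}$, local CI gives $\mu(I_{\fq})=\Ht(I)<i$, hence $\Supp\HH^i_I(R)\subseteq\{\fm\}$, and Ogus (resp. Hartshorne--Speiser) then forces $\HH^i_I(R)\simeq\E_R(R/\fm)^t$, so $\id=\dim=0$. The ``in particular'' clause is also handled acceptably. The genuine gap is at the bottom degree $i=\Ht(I)$. What must be proved there is $\id_R(\HH^h_I(R))=\dim_R(\HH^h_I(R))$ (the value $\dim R/I$ then follows, as you say, from Grothendieck non-vanishing at a minimal prime together with $\Supp\subseteq\V(I)$). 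None of the routes you sketch delivers the needed lower bound $\id_R(\HH^h_I(R))\geq\dim R/I$: injective dimension is not monotone under passing to submodules or subquotients (you acknowledge this yourself), a ``snake-type comparison'' is not an argument, and knowing $\Supp(\D(\HH^h_I(R)))=\Spec(R)$ plus completeness of the dual says nothing about the Bass numbers of $\HH^h_I(R)$, so it cannot convert Lyubeznik's upper bound $\id\leq\dim$ into an equality. Moreover, that whole detour is doubly problematic: the statement to be proved does not mention $\Supp\D$, and Proposition \ref{simon} (which you want to feed into a Mayer--Vietoris reduction) is proved only for $\mathbb{Q}[x_1,\ldots,x_n]_{(x_1,\ldots,x_n)}$, not for an arbitrary regular local ring containing a field, so it is not available in the generality of this corollary.

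The paper closes this spot with a single citation that you are missing: Dorreh's result (\cite[Proposition 3.5]{D}, quoted as Fact A) in the corollary's proof) states that for \emph{any} ideal $I$ of height $h$ in a regular local ring containing a field one has $\id(\HH^h_I(R))=\dim(\HH^h_I(R))$; no reduction from general equi-dimensional ideals to primes is needed, and no hypothesis beyond regularity and the field assumption is used at this step. You appear to know this fact only in the form used for primes inside Proposition \ref{simon}, and your attempted transfer from the prime case to general $I$ is exactly where the proposal breaks down. With Dorreh's statement for arbitrary ideals in hand, your argument for $i=\Ht(I)$ collapses to two lines, and the rest of your proposal goes through.
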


In fact we need a weaker assumption:  $I_{\fp}$ up to radical is generated
by any sequence of length equal to height of $I_{\fp}$ for all $\fp\in \V(I)\setminus\{\fm\}$.

\begin{proof}
First we prove the desired property for all
$i>\Ht(I)$. We know from the equi-dimensional  assumption that $\Ht(I_{\fp})=\Ht(I)$ for all $\fp\in \V(I)\setminus\{\fm\}$.
Let $\fq\in \V(I)\setminus\{\fm\}$ and let $j\geq i$. Since $\mu(I_{\fq})< i$ we see
$\HH^j_{I}(R)_\fq\simeq\HH^j_{I_{\fq}}(R_{\fq}) =0$. Trivially, $\HH^j_{I}(R)_\fq =0$ when $\fq\nsubseteq I$. Thus,  $\Supp \HH^j_{I}(R)\subset \{\fm\}$  for each $j \geq i$.
By  Fact i) in Proposition \ref{clsw},  $\HH^i_{I}(R)\simeq\E_R(R/ \fm)^t$  for some $t\in \mathbb{N}_0$  (the prime characteristic case is in \cite[Theorem 2.3]{HS}).
Without loss of the generality we assume $t\neq0 $. This means that  $\id_R(\HH^i_ I(R)) = \dim_R(\HH^i_ I(R))=0$.  In the remaining case
$i:=\Ht(I)$ we look at \cite[Proposition 3.5]{D}:
 \begin{enumerate}
\item[Fact] A) Let $A$ be a regular local ring
containing a field and $I$ be of   height $h$. Then
$\id(\HH^h_{I}(A)) = \dim(\HH^h_{I}(A))$.
\end{enumerate}
This paragraph works for any rings: Let $\fp\in\V(I)$ be of height $h:=\Ht(I)$. By Grothendieck's
non-vanishing theorem,  $\HH^h_{I}(R)_{\fp}\simeq\HH^h_{I_{\fp}}(R_{\fp})\simeq\HH^h_{\fp R_{\fp}}(R_{\fp})\neq 0$. Since
$\Supp(\HH^h_{I}(R))\subset\V(I)$,  we deduce that $\dim_R(\HH^h_ I(R))=\dim (\frac{R}{I})$.

To see the particular case we assume that $\rad(I)\neq\fm$. By the first part
 $\HH^{>h}_ I(R)$ is artinian.
Since $\dim(\HH^{\Ht(I)}_{I}(R))=\dim (R/I)>0$, $\HH^{\Ht(I)}_{I}(R)$ is not artinian. We conclude that $\q_I(R)=\Ht(I)$.
Then $\Ht(I)\leq\f_I(R)\stackrel{\ref{fir}}\leq\q_I(R)=\Ht(I)\footnote{Also, this follows by an observation of Hellus: He used Lyubeznik's inequlity to show if $R$ is a regular local ring containing a field then
$\HH^i_{I}(R)$ is finitely generated only if it vanishes. From this, $\Ht(I)=\f_I(R)$.  We  remark that Conjecture 1.1 implies Hellus' observation even in the mixed characteristic case.}$ and so $\q_I(R)=\Ht(I)=\f_I(R)$.
\end{proof}

\begin{example}\label{im}
Concerning the equality  $\id_R(\HH^i_ I(R))= \dim_R(\HH^i_ I(R))$ all of the assumptions are important.

i) The equi-dimensional assumption is important. Let $R:=K[[x,y,z]]$ and $I=(xy,xz)$. Let $\fp\in\V(I)\setminus\{\fm\}$.
Suppose first that $\fp\supset(y,z)$. Since $\fp\neq\fm$, $x\notin\fp$. As $x$ is invertible in $R_{\fp}$, $I_{\fp}=(y,z)_{\fp}$. This is complete-intersection.
 We may assume that $\fp\supset(x)$. If $\fp=(x)$, then both of $y$ and $z$ are invertible in $R_{\fp}$, and so $I_{\fp}=(x)_{\fp}$. This is complete-intersection.
If $\fp\supsetneqq(x)$. Then at least one of $\{y,z\}$ is not in $\fp$, because $\fp\neq\fm$. By symmetry, we may assume that $y\not\in\fp$. Since $y$ is invertible in $R_{\fp}$, $I_{\fp}=(x,xz)_{\fp}=(x)_{\fp}$. This is complete-intersection.  In conclusion, we have proved that $I$ is locally CI. Clearly, $I=(x)\cap(y,z)$ is not equi-dimensional. It is shown
in \cite[Example 2.9]{Hel3} that  $\id_R(\HH^2_ I(R))=0<1= \dim_R(\HH^2_ I(R))$. Also, we remark that  $\q_I(R)=2>1=\Ht(I)$.

ii) The locally CI assumption is important. Let $R:=K[[x_1,\ldots,x_7]]$ and $I=(x_1,x_2,x_3)\cap(x_4,x_5,x_6)\cap(x_7,x_1,x_2)$. Trivially
$I$ is equi-dimensional. We left to the reader to check that $I$ is not locally complete-intersection (this follows by the above corollary).
By \cite[Example 2.11]{Hel3} $\id_R(\HH^5_ I(R))=0<1= \dim_R(\HH^5_ I(R))$. Also, we remark that  $\q_I(R)=5>3=\Ht(I)$.

iii) The regular  assumption is important: We look at the complete-intersection ring $A =
\frac{\mathbb{C}[x, y, u, v]_{\fn}}{(xy - ux^2 - vy^2)}$, where  $\fn = (x, y, u, v)$. Set $J := (y, u, v)$.
This is a height-two prime ideal. Clearly,
$J$ is Cohen-Macaulay. We set $R:=\widehat{A}$ and look at $I:=\widehat{J}$. Clearly,
$R/I$ is Cohen-Macaulay and 1-dimensional. This means that
$I$ is equi-dimensional and $\V(I)\setminus\{\fm\}=\min(I)$ where $\fm:=\widehat{\fn A}$ stands for the maximal ideal. Thus,
for any $\fp\in \V(I)\setminus\{\fm\}$ we see  that $I_{\fp}=\rad(a,b)$ where $a$ and $b$
is any parameter sequence. Hence $I_{\fp}$ up to radical is generated
by a  sequence of length equal to height of $I_{\fp}$ for all $\fp\in \V(I)\setminus\{\fm\}$. We know from $(HLVT)$ that $H:=\HH^3_{I}(R)\neq 0$. This module is artinian and has a nonzero annihilator.
Suppose on the contradiction that $\id_R(H) = \dim_R(H)$. By Matlis theory,
$H=\bigoplus \E_R(R/ \fm)^t$.
It turns out that $H$ is faithful, a contradiction.
\end{example}

\begin{corollary}\label{d-2}
Let $R$ be a regular local ring containing  a field, $I$ be equi-dimensional and $\dim(R/I) = 2$.
Then $\id_R(\HH^i_ I(R)) = \dim_R(\HH^i_ I(R))$. In particular, $\q_I(R)=\Ht(I)=\f_I(R)$ provided $\rad(I)\neq\fm$.
\end{corollary}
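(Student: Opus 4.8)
The plan is to reduce Corollary \ref{d-2} to the case $i > \Ht(I)$ plus the already-established case $i = \Ht(I)$ (Fact A in the proof of Corollary \ref{lci}, i.e.\ \cite[Proposition 3.5]{D}), exactly as in the proof of Corollary \ref{lci}. So the entire content is: show $\HH^i_I(R)$ is injective (equivalently $\id_R(\HH^i_I(R))=\dim_R(\HH^i_I(R))=0$) for every $i>\Ht(I)$ whenever $I$ is equi-dimensional with $\dim(R/I)=2$. As before, by Fact i) in Proposition \ref{clsw} (and \cite[Theorem 2.3]{HS} in positive characteristic) it suffices to prove $\Supp(\HH^i_I(R))\subseteq\{\fm\}$ for all $i>\Ht(I)$; then $\HH^i_I(R)\simeq\E_R(R/\fm)^t$, whence $\D$ is flat and its support is $\Spec(R)$, and $\id=\dim=0$.

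First I would set $h:=\Ht(I)$ and pick $\fq\in\V(I)$ with $\fq\neq\fm$; since $\dim(R/I)=2$ the prime $\fq$ has $\dim(R/\fq)\in\{1,2\}$, so $\dim R_\fq = \dim R - \dim(R/\fq)\le \dim R-1$, and by equi-dimensionality $\Ht(I_\fq)=h$, hence $\dim(R_\fq/I_\fq)=\dim R_\fq - h\le 1$. Now I want $\HH^j_{I_\fq}(R_\fq)=0$ for all $j>h$. If $\dim(R_\fq/I_\fq)=0$, i.e.\ $I_\fq$ is $\fq R_\fq$-primary, then $\cd(I_\fq)=\dim R_\fq=h$ and we are done by $(HLVT)$/the right value of $\cd$. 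If $\dim(R_\fq/I_\fq)=1$, then by $(HLVT)$ $\HH^{\dim R_\fq}_{I_\fq}(R_\fq)=0$, so $\cd(I_\fq)\le \dim R_\fq-1 = h$, again giving the vanishing for $j>h$. Trivially $\HH^j_I(R)_\fq=0$ when $\fq\notin\V(I)$. Thus $\Supp(\HH^j_I(R))\subseteq\{\fm\}$ for all $j>h$, which is exactly what was needed.

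The remaining step $i=h$ is handled verbatim as in Corollary \ref{lci}: by Fact A there (\cite[Proposition 3.5]{D}) $\id(\HH^h_I(R))=\dim(\HH^h_I(R))$, and choosing $\fp\in\V(I)$ of height $h$ gives $\HH^h_I(R)_\fp\simeq\HH^h_{\fp R_\fp}(R_\fp)\neq0$ by Grothendieck non-vanishing, so $\dim_R(\HH^h_I(R))=\dim(R/I)=2$. The ``in particular'' clause follows as in Corollary \ref{lci}: assuming $\rad(I)\neq\fm$, the modules $\HH^{>h}_I(R)$ are artinian (by the injectivity just proved), while $\HH^h_I(R)$ has $\dim=2>0$ so is not artinian, giving $\q_I(R)=h$; then $h\le\f_I(R)\stackrel{\ref{fir}}{\le}\q_I(R)=h$, so $\q_I(R)=h=\f_I(R)$.

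I expect no serious obstacle: the only place requiring care is the local analysis at a height-$1$ quotient, $\dim(R_\fq/I_\fq)=1$, where one must invoke $(HLVT)$ over the regular local ring $R_\fq$ to kill the top local cohomology and thereby bound $\cd(I_\fq)$ by $h$ — this is the analogue of the ``$\Ht(J)=4$'' case in Lemma \ref{6gcm}. One subtlety worth double-checking is that the equi-dimensionality of $I$ is genuinely used to guarantee $\Ht(I_\fq)=h$ for every $\fq\in\V(I)\setminus\{\fm\}$ (otherwise a low-height component could force a higher $\cd$ locally); the hypothesis $\dim(R/I)=2$ then forces the local quotient dimension to be at most $1$, which is precisely the regime where $(HLVT)$ alone suffices, with no appeal to $\Se_k$-type results or \cite{dt}.
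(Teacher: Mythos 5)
Your argument is correct, and it gets to the conclusion by a somewhat different organization than the paper's proof. The paper first pins down the cohomological dimension: since $\grade(I)=\Ht(I)=d-2$ and $(HLVT)$ at $\fm$ gives $\cd(I)\le d-1$, only the cases $\cd(I)=d-2$ and $\cd(I)=d-1$ occur; the first (cohomologically CI) case is dispatched by citing \cite[Remark 2.7]{Hel3}, and in the second case only the spots $d-2$ and $d-1$ survive, the top one being handled by Lemma \ref{CD1} (whose proof is a localization at height $d-1$ primes plus $(HLVT)$ plus Lyubeznik's inequality) and the spot $i=\Ht(I)$ by Fact A) of Corollary \ref{lci}, i.e.\ \cite[Proposition 3.5]{D}. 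You instead skip the case distinction on $\cd(I)$ and prove uniformly, for every $i>\Ht(I)$, that $\Supp(\HH^i_I(R))\subseteq\{\fm\}$: localizing at an arbitrary $\fq\in\V(I)\setminus\{\fm\}$, equi-dimensionality gives $\Ht(I_\fq)=\Ht(I)$ and hence $\dim(R_\fq/I_\fq)\le 1$, so $(GVT)$ resp.\ $(HLVT)$ over $R_\fq$ kill all local cohomology above $\Ht(I)$; this is in effect an inlined and slightly more general form of the localization step inside Lemma \ref{CD1}, and it spares you the appeal to \cite[Remark 2.7]{Hel3} and to the $\cd$ bookkeeping. From there the two proofs coincide: the spot $i=\Ht(I)$ via \cite[Proposition 3.5]{D} together with Grothendieck non-vanishing, injectivity above the height via Fact i) of Proposition \ref{clsw} (or, more directly, Lyubeznik's inequality $\id\le\dim$ and \cite{HS} in positive characteristic), and the ``in particular'' clause exactly as in Corollary \ref{lci}. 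The only step to keep at the paper's level of rigor is the passage from $\Supp\subseteq\{\fm\}$ to $\HH^i_I(R)\simeq\E_R(R/\fm)^t$ with $t$ finite, which is what makes these modules artinian in the $\q_I(R)$ claim; this rests on the finiteness of Bass numbers (Lyubeznik, Huneke--Sharp), the same implicit ingredient the paper uses.
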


\begin{proof} Set $d:=\dim R$.
If $\cd(I)=d-2$, then $I$ is cohomologically CI. The claim for such a class of ideals is in \cite[Remark 2.7]{Hel3}.
By $(HLVT)$, we have $d-2=\grade(I)\leq\cd(I)\leq d-1$. Without loss of the generality we may assume that $\cd(I)=d-1$. There are only two spots
for which the local cohomology is nonzero.
It is shown in Lemma \ref{CD1} that $\dim(\HH^{d-1}_I(R))=\id(\HH^{d-1}_I(R))=0$. It remains
to deal
with $\HH^{d-2}_I(R)$. This is subject of Fact \ref{lci}.A).
The particular case is similar to Corollary \ref{lci}.
\end{proof}

\begin{lemma}\label{01n}
Let $R$ be a regular local ring of dimension $n$  containing a field and $R/I$ be Cohen-Macaulay.  Suppose $\Ht(I)\in\{0,1,n-2,n-1,n\}$.
Then $I$ is  cohomologically CI.
\end{lemma}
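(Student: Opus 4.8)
The plan is to prove that $I$ is cohomologically CI, i.e. $\HH^i_I(R)=0$ for all $i\neq \Ht(I)$, by treating each value of $h:=\Ht(I)$ in the allowed set $\{0,1,n-2,n-1,n\}$ separately, always reducing to the unmixed (indeed, radical) case first since $\HH^i_I(R)=\HH^i_{\sqrt I}(R)$ and $R/I$ Cohen-Macaulay forces $I$ unmixed. The cases $h=0$ and $h=n$ are trivial: if $h=0$ then $I=0$ (as $R$ is a domain and $I$ is unmixed of height $0$) so $\HH^i_I(R)=\HH^0_0(R)=R$ only for $i=0$; if $h=n$ then $\sqrt I=\fm$, $\HH^i_\fm(R)=0$ for $i\neq n=\depth R$, and $\HH^n_\fm(R)\neq 0$. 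The genuinely structural cases are $h=1$, $h=n-1$, and $h=n-2$.

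For $h=1$: since $I$ is unmixed radical of height one in the UFD $R$, it is principal, $I=(x)$, and $\HH^i_{(x)}(R)=0$ for all $i\neq 1$ directly from the Čech complex $0\to R\to R_x\to 0$, while $\HH^1_{(x)}(R)=R_x/R\neq 0$. For $h=n-1$: by Grothendieck vanishing $\HH^i_I(R)=0$ for $i>n$, and $(HLVT)$ gives $\HH^n_I(R)=0$ since $\widehat R$ is a domain and $\sqrt I\neq\fm$ (as $\dim R/I=1>0$); thus the only possibly nonzero spots are $i\in\{n-1\}$ together with $i\le n-1$, and since $\depth R=n$ and $\grade(I,R)=\Ht I=n-1$ (using that $R/I$ is Cohen-Macaulay, so $I$ is a complete intersection... or rather: $\HH^i_I(R)=0$ for $i<\grade(I)=\Ht(I)=n-1$), only $i=n-1$ survives, and $\HH^{n-1}_I(R)\neq 0$ because $\grade(I)=n-1$. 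For $h=n-2$: again $\HH^i_I(R)=0$ for $i<n-2$ by the grade statement, $\HH^n_I(R)=0$ by $(HLVT)$ (here $\dim R/I=2>0$), so the only possibly nonzero spots are $i\in\{n-2,n-1\}$; the point is to kill $\HH^{n-1}_I(R)$. This is where the $R/I$ Cohen-Macaulay hypothesis with $\dim R/I=2$ enters: one can invoke the second vanishing theorem or, more in keeping with the tools already assembled, use that for a Cohen-Macaulay ideal of codimension $n-2$ localizing at any non-maximal prime $\fp\in\V(I)$ gives $R_\fp/I_\fp$ Cohen-Macaulay of dimension $\dim R_\fp-(n-2)\ge 1$, so $\depth(R_\fp/I_\fp)\ge 1$; one wants $\cd(I_\fp)\le \dim R_\fp-1$, i.e. $\HH^{n-1}_I(R)_\fp=0$ for $\fp\neq\fm$, reducing $\Supp\HH^{n-1}_I(R)\subseteq\{\fm\}$, and then apply $(HLVT)$ locally at $\fm$ again, or cite Fact A) of Lemma \ref{6gcm} ($\depth(R_\fp/I_\fp)\ge 2$ would give $\cd(I_\fp)\le\dim R_\fp-2$; when $\dim R_\fp=n-1$ we need $\depth\ge 2$ which holds since $R_\fp/I_\fp$ is CM of dimension $\ge 1$... the boundary case $\dim R_\fp=n-1$, $\dim R_\fp/I_\fp=1$ needs $(HLVT)$ instead of the depth bound).

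The main obstacle I anticipate is precisely this $h=n-2$ case: disposing of $\HH^{n-1}_I(R)$ requires showing its support is contained in $\{\fm\}$ and then $\HH^{n-1}_\fm$-type vanishing, and the localization argument is delicate at primes $\fp$ with $\dim R_\fp=n-1$ where the depth bound $\cd(\fa)\le\dim B-2$ from Fact A) of Lemma \ref{6gcm} is exactly borderline and one must instead cite $(HLVT)$ (valid because $R_\fp$ is regular hence its completion is a domain, and $I_\fp$ is not $\fp R_\fp$-primary since $\dim R_\fp/I_\fp\ge 1$). Once $\Supp\HH^{n-1}_I(R)\subseteq\{\fm\}$ is established, $\HH^{n-1}_I(R)$ being supported at the closed point of the regular local ring $R$ with $\HH^{n-1}_I(R)\neq 0$ impossible: either note $\HH^{n-1}_I(R)$ would be $\fm$-torsion and artinian, and by Lyubeznik's inequality $0\le\id_R(\HH^{n-1}_I(R))\le\dim\HH^{n-1}_I(R)=0$ so it is a direct sum of copies of $\E_R(k)$, contradicting $\HH^{n-1}_I(R)_\fp=0$ being compatible only with the zero module being... actually one argues more simply that an $I$-torsion module cannot be $\fm$-coprimary unless $\sqrt I=\fm$; since $\sqrt I\neq\fm$, $\HH^{n-1}_I(R)=0$. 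I would close by assembling the five cases into a single statement, noting in each that the unique non-vanishing cohomology is in degree $h=\Ht(I)$.
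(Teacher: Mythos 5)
Your cases $\Ht(I)\in\{0,1,n-1,n\}$ are fine and coincide with the paper's treatment (UFD/principal for height one, $(HLVT)$ plus the grade bound for height $n-1$). The gap is in the case $\Ht(I)=n-2$, which is the only case where the Cohen--Macaulay hypothesis really has to work. Your plan is to show $\Supp\HH^{n-1}_I(R)\subseteq\{\fm\}$ by localizing at non-maximal primes and then to conclude $\HH^{n-1}_I(R)=0$. But that last step does not follow, and both of the arguments you sketch for it are incorrect. First, Lyubeznik's inequality only tells you $\HH^{n-1}_I(R)\simeq\E_R(k)^t$; a direct sum of copies of $\E_R(k)$ vanishes after localizing at every non-maximal prime, so there is no contradiction with what you have proved. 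Second, the claim that ``an $I$-torsion module cannot be supported only at $\fm$ unless $\sqrt I=\fm$'' is false: every $\fm$-torsion module is automatically $I$-torsion (if $\fm^n x=0$ then $I^n x=0$), so $\E_R(k)$ itself is a counterexample. Hartshorne's example, quoted in the paper, makes the failure concrete: for $\fa=(x_1,x_2)\cap(x_3,x_4)$ in a four-dimensional regular local ring one has $\Supp\HH^{3}_{\fa}(R)=\{\fm\}$ and $\HH^{3}_{\fa}(R)\simeq\E_R(k)\neq 0$, even though $\fa$ is locally a complete intersection off $\fm$ and $\sqrt{\fa}\neq\fm$. So the information ``supported only at the closed point'' can never, by itself, yield vanishing; what distinguishes your situation from Hartshorne's is the depth of $R/I$ \emph{at} $\fm$, which your localization argument discards.

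The repair is the paper's own one-line argument, and you in fact cite the right tool but apply it in the wrong place: Fact A) of Lemma \ref{6gcm} (Varbaro, \cite[Proposition 3.1]{Ma}) should be applied to $R$ itself, not to the localizations. Since $\Ht(I)=n-2$ and $R/I$ is Cohen--Macaulay, $\depth(R/I)=\dim(R/I)=2\geq 2$, hence $\cd(I)\leq\dim R-2=n-2$. Combined with $n-2=\grade(I,R)\leq\cd(I)$ and the vanishing $\HH^i_I(R)=0$ for $i<\grade(I,R)$, this gives $\HH^i_I(R)=0$ for all $i\neq n-2$, i.e.\ $I$ is cohomologically CI, with no localization and no discussion of supports needed.
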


\begin{proof}
The claim is clear if height of $I$ is zero or $n$.
Suppose $\Ht(I)=1$. Since  $I$ is unmixed and $R$ is $UFD$,
 $I$ is principal. This implies that $ I$ is cohomologically CI. The case $\Ht(I)=n-1$ follows by $(HLVT)$. Finally, we deal with the case $\Ht(I)=n-2$.
We use Fact A) in Lemma \ref{6gcm} to see $n-2=\grade(I,R)\leq\cd(I) \leq n-2$. Clearly, $I$ is  cohomologically CI.
\end{proof}

\begin{remark}\label{b5}
Let $R_d:=\mathbb{Q}[x_1,\ldots,x_d]$ and  $I\lhd_h R$ be  Cohen-Macaulay. Suppose  either $d<6$ or $I$ is monomial.
 Then $I$ is cohomologically CI.
\end{remark}

\begin{proof}
First, we prove the monomial case (in Remark \ref{rich} we will drive this from  Richardson's idea). We adopt the notation presented in Proposition \ref{mo}. A minimal free-resolution $ \mathcal{F}$ of $R/I$ is a complex consisting of $\mathbb{Z}^d$-graded modules equipped with
$\mathbb{Z}^d$-graded differentials. Then  $\F_{q^n}(\mathcal{F})$ is  a minimal free resolution of $R/I^{[q^n]}$. Due to the graded version of Auslander-Buchsbaum formula
 $$\pd_R(\frac{R}{I^{[q^n]}}) =\pd(\frac{R}{I}) = d-\depth(\frac{R}{I}) = d-\dim (\frac{R}{I}) = \Ht(I)=\grade(I,R)=:\ell.$$ Thus, $\HH^i_I(R)={\varinjlim}_n\Ext^{i}_{R}(R/I^{[q^n]}, R)=0$ for any $i\neq \ell$.
 Now we deal with any homogeneous ideals over $R_d$ with  $d<6$. We present the proof when $d=5$.
Suppose $\Ht(I)=2$. Then $R/I$ is $\Se_3$. Recall from \cite[Theorem 3.5]{Ma} that $\cd(I) = 5-3$, i.e., $ I$ is cohomologically CI.
The remaining cases are  in Lemma \ref{01n}.
\end{proof}

 Let $R:=\mathbb{Q}[x_{ij}:1\leq i\leq2, 1\leq j\leq3]$ and $I_2$  be the 2-minors of $(x_{ij})$. By  \cite{HS}, $I_2$ is a graded Cohen-Macaulay ideal and $\grade(I_2,R)=2<3=\cd(I_2)$. One can show that  $\id_R(\HH^3_{I_2}(R)) = \dim_R(\HH^3_{I_2}(R))=0$,
and $\id_R(\HH^2_{I_2}(R)) = \dim_R(\HH^2_{I_2}(R))=4$.
An ideal $I$ is called  almost CI, if $\mu(I)\leq\Ht(I)+1$. For example, $I_2$ is almost CI.
We show:

\begin{corollary}\label{g3}
Let $R$ be a regular local ring of dimension $6$ containing  a field $k$ and  $I$ be  Cohen-Macaulay. Suppose either i) $R$ is essentially of finite type over $k$,
or ii) $I$ is almost CI.
Then
$\id (\HH^i_ I(R)) = \dim (\HH^i_ I(R))$ for any $i$.
\end{corollary}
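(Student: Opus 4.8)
The plan is to split the equality $\id_R(\HH^i_I(R))=\dim_R(\HH^i_I(R))$ according to the three ranges $i<h$, $i=h$, $i>h$, where $h:=\Ht(I)$, and to invoke the hypotheses i) and ii) only for the last, hardest range. Since $R/I$ is Cohen-Macaulay, $I$ is perfect and unmixed, so $\grade(I)=h$; hence $\HH^i_I(R)=0$ and both invariants are $-\infty$ for $i<h$. For $i=h$ the equality $\id_R(\HH^h_I(R))=\dim_R(\HH^h_I(R))$ holds over any regular local ring containing a field by Fact \ref{lci}.A, i.e.\ \cite[Proposition 3.5]{D}, and localizing at a minimal prime of $I$ together with Grothendieck's non-vanishing theorem shows the common value is $\dim(R/I)$. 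So it remains to deal with $i>h$. If $h\in\{0,1,4,5,6\}$, then Lemma \ref{01n} gives that $I$ is cohomologically complete intersection, whence $\HH^i_I(R)=0$ for all $i\neq h$ and we are done; so from now on $h\in\{2,3\}$.

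Assume hypothesis i), that $R$ is essentially of finite type over $k$. A Cohen-Macaulay quotient is generalized Cohen-Macaulay, and $\dim R=6<7$, so Lemma \ref{6gcm} shows $\HH^i_I(R)$ is artinian for every $i>h$. For such $i$ either $\HH^i_I(R)=0$, and then both invariants are $-\infty$, or $\dim_R(\HH^i_I(R))=0$ and the Lyubeznik inequality \cite[Corollary 3.6]{lin} forces $0\le\id_R(\HH^i_I(R))\le\dim_R(\HH^i_I(R))=0$. In either case the two invariants agree, and case i) is done.

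Assume instead hypothesis ii), so $\mu(I)\le h+1$. If $\mu(I)=h$ then $I$ is a complete intersection, hence cohomologically complete intersection, and only the already-treated degree $i=h$ survives. So assume $\mu(I)=h+1$; then $\cd(I)\le\mu(I)=h+1$, so $\HH^i_I(R)=0$ for $i>h+1$ and the sole remaining degree is $i=h+1$. The key step is that $\Supp\HH^{h+1}_I(R)\subseteq\{\fm\}$. Fix $\fp\in\V(I)\setminus\{\fm\}$; since $I$ is unmixed of height $h$, $\Ht(I_\fp)=h$, while $(R/I)_\fp$ is Cohen-Macaulay and $n:=\dim R_\fp$ satisfies $h\le n\le 5$. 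If $n\le h+2$, then $h\in\{n-2,n-1,n\}$, so Lemma \ref{01n} applied to $R_\fp$ gives that $I_\fp$ is cohomologically complete intersection and $\HH^{h+1}_{I_\fp}(R_\fp)=0$. The only case with $n>h+2$ is $h=2$ and $n=5$ (for $h=3$ one always has $n\le h+2$ since $\fp\neq\fm$), and there $(R/I)_\fp$ is Cohen-Macaulay of dimension $3$, in particular $\Se_3$, so \cite[Theorem 3.8(2)]{dt} yields $\cd(I_\fp)\le 5-3=2<h+1$ and again $\HH^{h+1}_{I_\fp}(R_\fp)=0$; here one may first pass to the completion of $R$, so that the quoted estimate applies in the present possibly non-excellent setting. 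This establishes $\Supp\HH^{h+1}_I(R)\subseteq\{\fm\}$, hence $\dim_R(\HH^{h+1}_I(R))\le 0$, and \cite[Corollary 3.6]{lin} again gives $0\le\id_R(\HH^{h+1}_I(R))\le\dim_R(\HH^{h+1}_I(R))\le 0$. Thus the two invariants coincide, finishing case ii).

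I expect the main obstacle to be precisely this top module $\HH^{h+1}_I(R)$ under hypothesis ii): the localization argument turns the problem into cohomological-dimension vanishing in regular local rings of dimension at most $5$, and the genuinely delicate instance is a height-two ideal in a $5$-dimensional local ring, where the coarse depth bound \cite[Proposition 3.1]{Ma} (namely $\cd\le\dim-2$) does not suffice and one must use the sharper $\Se_3$-estimate of \cite{dt}; some care is also needed to ensure that estimate applies outside the excellent case, which is why the reduction to the completion is built in.
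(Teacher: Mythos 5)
Your reduction to the degrees $i>h$, the treatment of $i=h$ via Fact \ref{lci}.A), the use of Lemma \ref{01n} for $h\in\{0,1,4,5,6\}$, and your entire case i) (Lemma \ref{6gcm} plus the Lyubeznik/Huneke--Sharp inequality) are correct and in line with the paper; for hypothesis i) the paper argues the same way (and extends it in Corollary \ref{lgcm}).

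Case ii), however, has a genuine gap at exactly the point you identify as delicate: $h=2$ and $\fq$ of height $5$. There you invoke \cite[Theorem 3.8(2)]{dt} for $R_\fq$ (or its completion), but that theorem is proved for regular local rings \emph{essentially of finite type over a field} -- which is precisely hypothesis i), the one you do not have in case ii). Passing to the completion does not repair this: $\widehat{R_\fq}\simeq K[[x_1,\ldots,x_5]]$ is not essentially of finite type over $K$, and the characteristic-zero "third vanishing" bound of Dao--Takagi is not available for arbitrary (non-algebraizable) ideals of a complete regular local ring; completion only transfers the desired vanishing by faithful flatness, it does not supply it. A telling symptom is that your argument at this step never uses $\mu(I)\le h+1$; if it were valid, hypothesis ii) could be deleted from the statement altogether, which is exactly what the paper cannot do. The paper's route through this bottleneck is different and is where almost CI actually enters: since $I_\fq$ is a Cohen--Macaulay (hence unmixed) height-two ideal with $\mu(I_\fq)\le 3$, Hilbert--Burch gives $I_\fq=aI_2(\varphi)$ for a $3\times 2$ matrix $\varphi$, with $a$ a unit by unmixedness, so $I_\fq=I_2(\varphi)$; then the Lyubeznik--Singh--Walther isomorphism \cite[Theorem 1.1]{lsw} yields $\HH^3_{I_2(\varphi)}(R_\fq)\simeq\HH^6_{I_1(\varphi)}(R_\fq)=0$ by Grothendieck vanishing, since $\dim R_\fq=5$. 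This argument is characteristic-free and valid over any Noetherian local ring, so it needs neither finite-type hypotheses nor completion. Replace your Dao--Takagi step by this Hilbert--Burch/LSW argument (and keep \cite{dt} only for case i)) and the proof is complete; also note the paper handles $\HH^4_I(R)$ for $h\in\{2,3\}$ without any localization, via Varbaro's bound $\dim\Supp(\HH^{n-i}_{I}(R))\le i-2$ for $i<\depth(R/I)$, which is an alternative to your Lemma \ref{01n} localization but not a point of error.
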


In  Corollary \ref{lgcm} we will extend this by assuming $I$ is generalized Cohen-Macaulay.

\begin{proof}The claim is clear for cohomologically CI, see e.g. \cite[Remark 2.7]{Hel3}. We investigated the cases $\Ht(I)\in\{0,1,4,5,6\}$  in  Lemma \ref{01n}. Suppose $\Ht(I)=3$. We prove this without any use of i) and ii). Since $R/I$ is Cohen-Macaulay, we have $\depth(R/I)=3$. In view of  Fact A) in Lemma \ref{6gcm}  we know $\cd(I) \leq 6-2$.  Due to  \ref{lci}.A)  $\id_R(\HH^3_ I(R)) = \dim_R(\HH^3_ I(R))=3$.
Suppose $\HH^4_ I(R)\neq0$ (conjecturally, this never happens).
\begin{enumerate}\item[Fact A)]  (See \cite[Proposition 3.2]{Ma}) Let $A$ be an $n$-dimensional regular local ring containing a field and $\fa$ an ideal.
If $\depth(R/\fa) = k$, then $\dim(\Supp(\HH^{n-i}_{\fa}(A))\leq i-2$ for all $0 \leq i < k$.
\end{enumerate}
Then, $\dim(\HH^{4}_{I}(R))=0$.
By a celebrated  result of Lyubeznik $0\leq\id(\HH^4_{I}(R))\leq\dim(\HH^{4}_{I}(A))=0.$
Without loss of the generality we may   assume  that $\Ht(I)=2$.   In this case $\cd(I)\leq 6-2$. By Fact A) in Corollary \ref{lci} $\id_R(\HH^2_ I(R)) = \dim_R(\HH^2_ I(R))=4$. By the above fact,
 $\id(\HH^4_{I}(R))=\dim(\HH^{4}_{I}(A))$. The proof finishes if we show $\id(\HH^3_{I}(R))=\dim(\HH^{3}_{I}(A))$.
 We have nothing to prove if $\HH^3_ I(R)= 0$.
Suppose it is nonzero, and suppose in the contradiction that
$\dim(\HH^{3}_{I}(R))\geq1$. Let $\fq\in \Supp(\HH^{3}_{I}(R))$ be a prime ideal of height five.
Let $A:=R_{\fq}$ and $J:=IA$.
\begin{enumerate}
\item[Claim)] Let $A$  be a five-dimensional regular local ring  and  $J$ be a Cohen-Macaulay ideal of height two. Suppose either
$\mu(J)\leq 3$ or $A$ is essentially of finite type over a field.  Then
$\HH^{3}_{J}(A)=0$.\end{enumerate} Indeed,  suppose first that $\mu(J)\leq 3$. Due to Hilbert-Burch, there is a $3\times 2$ matrix $\varphi$ and an element $a$ such that
$J=aI_2(\varphi)$. Since $J$ is equi-dimensional we see that $a$ is a unit. Therefore, we can assume that
$J=I_2(\varphi)$.
By \cite[Theorem 1.1]{lsw} one has $\HH^{3}_{J}(A)=\HH^{3}_{I_2(\varphi)}(A)\simeq \HH^{6}_{I_1(\varphi)}(A)=0$, because of $(GVT)$.
The desired claim in the later case is in \cite[Theorem 3.8(2)]{dt}.
In view of this claim, $\HH^{3}_{I}(R)_{\fq}\simeq\HH^{3}_{J}(A)=0$.  This contradiction shows that $\dim(\HH^{3}_{I}(R))=0$.
Consequently, $0\leq\id(\HH^3_{I}(R))\leq\dim(\HH^{3}_{I}(R))=0.$ The proof is now complete.
\end{proof}

\begin{corollary}
Let $R$ be a regular local ring of dimension $5$ containing  a field and  $I\lhd R$ be Cohen-Macaulay. Then
$\id_R(\HH^i_ I(R)) = \dim_R(\HH^i_ I(R))$ for any $i$.
\end{corollary}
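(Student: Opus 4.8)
The plan is to run a case analysis on $h:=\Ht(I)$. Since $\dim R=5$ we have $h\in\{0,1,2,3,4,5\}$, and the asserted equality $\id_R(\HH^i_I(R))=\dim_R(\HH^i_I(R))$ is vacuous (both sides $=-\infty$) in every degree $i$ with $\HH^i_I(R)=0$; so in each case it suffices to treat the degrees in which local cohomology does not vanish.

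First I would dispatch the range $h\in\{0,1,n-2,n-1,n\}=\{0,1,3,4,5\}$. Here $R/I$ is Cohen--Macaulay, so Lemma \ref{01n} shows that $I$ is cohomologically CI, whence $\HH^i_I(R)=0$ for all $i\neq h$ and the only content is the identity $\id_R(\HH^h_I(R))=\dim_R(\HH^h_I(R))$, which is Fact A) in Corollary \ref{lci} (i.e. \cite[Proposition 3.5]{D}) and is also recorded in \cite[Remark 2.7]{Hel3}. This leaves only $h=2$.

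For $h=2$ the ring $R/I$ is Cohen--Macaulay of dimension $3$, so $\depth(R/I)=3\geq 2$ and Fact A) in Lemma \ref{6gcm} gives $\cd(I)\leq\dim R-2=3$; hence only $\HH^2_I(R)$ and $\HH^3_I(R)$ can be nonzero. For $\HH^2_I(R)$, since $2=\Ht(I)$, the equality $\id_R(\HH^2_I(R))=\dim_R(\HH^2_I(R))$ is once more Fact A) in Corollary \ref{lci}. For $\HH^3_I(R)$ I would use the support estimate \cite[Proposition 3.2]{Ma} (quoted as Fact A) in the proof of Corollary \ref{g3}): with $n=\dim R=5$ and $k=\depth(R/I)=3$, the admissible index $i=2<k$ gives $\dim(\Supp \HH^{n-i}_I(R))=\dim(\Supp \HH^{3}_I(R))\leq i-2=0$. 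Thus $\HH^3_I(R)$ is supported at most at $\fm$, and Lyubeznik's inequality $\id_R(-)\le\dim_R(-)$ (\cite[Corollary 3.6]{lin}) forces $0\le\id_R(\HH^3_I(R))\le\dim_R(\HH^3_I(R))=0$ when this module is nonzero (and both sides equal $-\infty$ otherwise). Assembling the degrees $i=2$ and $i=3$ completes the case $h=2$, and with it the corollary.

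I do not expect a genuine obstacle. The one point worth highlighting is the contrast with the six-dimensional statement Corollary \ref{g3}: there the \emph{middle} module $\HH^{h+1}_I(R)$ can a priori have one-dimensional support, which is why that proof needs the Hilbert--Burch description together with \cite[Theorem 1.1]{lsw} and an extra hypothesis (essentially of finite type, or almost CI). In dimension five the estimate \cite[Proposition 3.2]{Ma} already collapses the only such module $\HH^3_I(R)$ to the closed point, so no extra hypothesis and no such reduction is required. The remaining care is purely bookkeeping: controlling $\cd(I)$ and matching each cited fact to the correct cohomological degree.
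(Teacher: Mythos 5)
Your proposal is correct and follows essentially the same route as the paper: the paper also reduces to the single nontrivial case $\Ht(I)=2$, $i=3$ (the other heights and degrees being covered by Lemma \ref{01n}, Fact \ref{lci}.A and the cohomologically CI case), observes that $\dim(\HH^3_I(R))=0$ via the support estimate of Varbaro used in Corollary \ref{g3}, and concludes with Lyubeznik's inequality. Your write-up merely spells out the bookkeeping the paper leaves implicit, so there is nothing to correct.
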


\begin{proof}
The only nontrivial case is $\Ht(I)=2$ and $i=3$.
We assume $\HH^3_ I(R)\neq0$ (conjecturally, this never happens). It follows that
 $\dim(\HH^3_ I(R))=0$. In conjunction with Lyubzenik's  inequality, $\id_R(\HH^3_ I(R)) = \dim_R(\HH^3_ I(R))=0$.
\end{proof}

\begin{corollary}\label{g7}
Let $R$ be a regular local ring  essentially of finite type over a field and of dimension $7$. If $I\lhd R$ is Gorenstein,
then
$\id_R(\HH^i_ I(R)) = \dim_R(\HH^i_ I(R))$ for any $i$.
\end{corollary}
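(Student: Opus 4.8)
The plan is to run a case analysis on $h:=\Ht(I)$, after reducing to $\Char k=0$. If $\Char k>0$, then, $R/I$ being Cohen--Macaulay, Peskine--Szpiro gives that $I$ is cohomologically complete intersection, so only $\HH^h_I(R)$ is nonzero and there $\id=\dim=\dim(R/I)$ by \cite[Proposition 3.5]{D}; hence I may assume $\Char k=0$. Next, the cases $h\in\{0,1,5,6,7\}$ are handled by Lemma \ref{01n} (with $n=7$), which shows $I$ is cohomologically complete intersection, so the claim follows as above (cf.\ \cite[Remark 2.7]{Hel3}). For $h=2$ I would invoke the classical fact that a grade-two Gorenstein ideal is a complete intersection (the minimal free resolution of $R/I$ is self-dual of length two, forcing the Hilbert--Burch matrix to be $2\times 1$); again $I$ is then cohomologically complete intersection and we are done. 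This is the only place where the full Gorenstein hypothesis, rather than mere Cohen--Macaulayness of $R/I$, is used.

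There remain $h=3$ and $h=4$. In both, $R/I$ is Cohen--Macaulay with $\depth(R/I)\geq 2$, so Fact A) in Lemma \ref{6gcm} (i.e.\ \cite[Proposition 3.1]{Ma}) gives $\cd(I)\leq 5$, and at the spot $i=h$ one has $\id(\HH^h_I(R))=\dim(\HH^h_I(R))=\dim(R/I)$ by \cite[Proposition 3.5]{D}. If $h=4$, the only other possibly nonzero local cohomology is $\HH^5_I(R)=\HH^{7-2}_I(R)$; since $2<\depth(R/I)=3$, Fact A) in Corollary \ref{g3} (i.e.\ \cite[Proposition 3.2]{Ma}) forces $\dim(\HH^5_I(R))\leq 0$, and Lyubeznik's inequality \cite[Corollary 3.6]{lin} then yields $\id(\HH^5_I(R))=\dim(\HH^5_I(R))=0$. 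This finishes $h=4$.

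The case $h=3$ is the core of the argument, because a priori $\HH^4_I(R)$ need not be $\fm$-supported ($\dim(\HH^4_I(R))\leq 1$ by \cite[Proposition 3.2]{Ma}). I would instead show that $\HH^i_I(R)$ is $\fm$-supported for every $i>3$ by proving $\cd(I_\fp)\leq 3$ for each $\fp\in\V(I)\setminus\{\fm\}$. As $R/I$ is Cohen--Macaulay it is unmixed, so $\dim(R/I)_\fp=\Ht(\fp)-3$, and I split on $\Ht(\fp)\in\{3,4,5,6\}$: for $\Ht(\fp)=3$, $I_\fp$ is $\fp R_\fp$-primary and $\cd(I_\fp)\leq\dim R_\fp=3$; for $\Ht(\fp)=4$, $R_\fp$ is a regular local domain and $\widehat{R_\fp}/\widehat{I_\fp}$ is one-dimensional, so $(HLVT)$ gives $\HH^4_{I_\fp}(R_\fp)=0$; for $\Ht(\fp)=5$, $(R/I)_\fp$ is Cohen--Macaulay of dimension $2$, so $\depth((R/I)_\fp)=2$ and \cite[Proposition 3.1]{Ma} gives $\cd(I_\fp)\leq 5-2=3$; and for $\Ht(\fp)=6$, $(R/I)_\fp$ is Cohen--Macaulay of dimension $3$, hence satisfies Serre's condition $\Se_3$, and since $R_\fp$ is still essentially of finite type over $k$, \cite[Theorem 3.8(2)]{dt} gives $\cd(I_\fp)\leq 6-3=3$. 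Consequently $\HH^i_I(R)_\fp=0$ for all $i>3$, so $\Supp(\HH^i_I(R))\subseteq\{\fm\}$, whence $\dim(\HH^i_I(R))\leq 0$, and one last application of \cite[Corollary 3.6]{lin} gives $\id(\HH^i_I(R))=\dim(\HH^i_I(R))$ for $i>3$ (the case $\HH^i_I(R)=0$ being vacuous). The main obstacle is precisely the subcase $\Ht(\fp)=6$: this is where the hypothesis ``essentially of finite type over $k$'' is indispensable, since the Serre-condition bound on cohomological dimension of \cite[Theorem 3.8(2)]{dt} is not available over an arbitrary regular local ring containing a field, and \cite[Proposition 3.1]{Ma} alone would only yield $\cd(I_\fp)\leq 4$ there.
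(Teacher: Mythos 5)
Your proposal is correct and follows essentially the same route as the paper: the height-$2$ case is settled exactly as in the text by Serre's theorem that a height-two Gorenstein ideal is a complete intersection, and the remaining heights are handled by the same toolkit used for Corollary \ref{g3} (Lemma \ref{01n}, Varbaro's depth bounds, Dao--Takagi, $(HLVT)$, Lyubeznik's inequality, and Fact \ref{lci}.A). The only difference is that you carry out explicitly, via localization at primes of heights $3$ through $6$, the ``straightforward modifications'' the paper leaves to the reader for $\Ht(I)\in\{3,4\}$, and you add a harmless characteristic-$p$ reduction via Peskine--Szpiro.
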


\begin{proof}By the same reason as of  Corollary \ref{g3}, it follows from the Cohen-Macaulay property that $\id_R(\HH^i_ I(R)) = \dim_R(\HH^i_ I(R))$
provided $\Ht(\fa)\neq2$. We left its straightforward  modifications to the reader.  The remaining case is $\Ht(\fa)=2$. Here, we use the   Gorenstein property. By a famous
result of Serre (see \cite{Se}),  $I$  is CI. In particular, $\id_R(\HH^i_ I(R)) = \dim_R(\HH^i_ I(R))$ for any $i$.
\end{proof}

Let $J$ be a square-free monomial ideal in a polynomial ring $R:=K[x_1,\ldots,x_n]$ where $K$ is a field   of any characteristic.
Richardson in his thesis proved:
i) $ J$  is Cohen-Macaulay $\Longleftrightarrow J$  is cohomologically CI,
ii) $\HH^{n-j}_{\fm}(R/J)=0\Longleftrightarrow\HH^j_J(R)=0$,
iii)   $\ell(\HH^{n-j}_{\fm}(R/J))<\infty\Longleftrightarrow\HH^j_J(R)$ is artinian,
iv)   If $R/J$ satisfies $\Se_r$  over the punctured homogeneous-spectrum, then $\HH^j_J(R)$ is artinian for all $j\geq n-r$.

\begin{remark}\label{rich} Let $I$ be a  monomial ideal of the above $R$. The following assertions hold:
\begin{enumerate}
 \item[i)] If $I$ is Cohen-Macaulay,   then $\HH^j_I(M)=0$ for all $i>\Ht(I)$ and  for any module $M$. Indeed, the Cohen-Macaulay property decent from a monomial ideal to its radical (see \cite{ht}). We apply this along with the claim in the squarefree case to see $\cd(I)=\Ht(I)$. Since  $\cd(I,M)\leq\cd(I)$, $\HH^j_I(M)=0$ for all $i>\Ht(I)$.
 \item[ii)]   If $\HH^{n-j}_{\fm}(R/I) = 0$ then $\HH^j_I(R) = 0$. Indeed, in view of \cite{ht}
$\dim_K\HH^{n-j}_{\fm}(\frac{R}{\sqrt{I}})_{a} =\dim_K\HH^{n-j}_{\fm}(R/I)_a$ where $a:=(a_1,\ldots,a_n)\in \mathbb{Z}^n$ and $a_i\leq 0$.
Also, by Hochster's formula $\dim_K\HH^{n-j}_{\fm}(\frac{R}{\sqrt{I}})_{a}=0$ if at least   some  $a_i>0$. We deduce from this that $ \HH^{n-j}_{\fm}(\frac{R}{\sqrt{I}}) =0$. In view of
the claim in the squarefree case $\HH^j_I(R) = \HH^j_{\sqrt{I}}(R)=0$.
\item[iii)]  If $\HH^{n-j}_{\fm}(R/I) $ is of finite length, then $\HH^j_I(R)$ is artinian. Indeed, since $\dim_K\HH^{n-j}_{\fm}(\frac{R}{\sqrt{I}})_{a}\leq \dim_K\HH^{n-j}_{\fm}(R/I)_a.$
This implies  that $\ell( \HH^{n-j}_{\fm}(\frac{R}{\sqrt{I}}))<\infty$. In the light of
the squarefree case $\HH^j_I(R) = \HH^j_{\sqrt{I}}(R)$ is artinian.
\item[iv)] If $R/I$ satisfies $\Se_r$ condition over the punctured  homogeneous-spectrum, then $\HH^j_I(M)$ is artinian for all $j\geq n-r$  for any finitely generated module $M$. Indeed, in a  similar vein as iii) one can show that   $\HH^j_I(R)$ is artinian for all $j\geq n-r$. The module case follows from the standard reduction, see \cite[Lemma III 4.10]{PS}.
\item[v)] The converse of  i),  ii),  iii)  and  iv) is not  true. We look at  $I: =( a^6,a^5b,ab^5,b^6,a^4b^4c,a^4b^4d,a^4e^2f^3,b^4e^3f^2)\lhd R := K[a,b,c,d,e,f].$
 In order to show $I$  is  cohomologically CI, we revisit
 \cite[Theorem 1.4(iv)]{lim} to see $\cd(I)=\cd(\sqrt{I})=\dim R-\depth(\frac{R}{\sqrt{I}})=6-\depth (\frac{R}{(a,b)})=2.$
We are going to show $\HH^{6-5}_{\fm}(R/I) $ is not of finite length. Suppose on the contradiction that
it is of finite length. Then its Matlis dual $\Ext^5_R(R/I,R)\simeq\Ext^4_R(I,R)$ is of finite length. We use Macaulay 2:\\
i1 : R=QQ[a,b,c,d,e,f]\\i2: $I=ideal(a^ 6,a^5*b,a*b^5,b^6,a^4*b^4*c,a^4*b^4*d,a^4*e^2*f^3,b^4*e^3*f^2)$\\i3: $M=Ext^4(I,R)$\\i4: dim M\\o4 = 1\\
One may use this to see that $\Ext^5_R(R/I,R)$ is not artinian. This contradiction shows that $\HH^{1}_{\fm}(R/I) $ is not of finite length. But,  $\HH^5_ I(R)=0$ is artinian.
So,  converse of the first three items is not true.
In order to fail the reverse of
iv)  we cite \cite[Example 3.3.2]{richar}.
\end{enumerate}
 \end{remark}

Recall   for the squarefree monomial ideal $I$ that $\ell(\HH^{n-j}_{\fm}(R/I))<\infty\stackrel{(\ast)}\Longleftrightarrow\HH^j_I(R)$ is artinian.
This  implies that $\q_{I}(R)=\Ht(I)$ provided $I$   is generalized  Cohen-Macaulay and  $\rad(I)\neq\fm\quad(+)$ . We observed that only half of $(\ast)$ works for monomial ideals.
Despite of this, the following drops the squarefree assumption from $(+)$:

\begin{corollary}\label{q}
Let $I$ be a  monomial ideal in a polynomial ring $R$ with $n$ variables over a field $K$  of any characteristic.
Suppose $I$ is generalized  Cohen-Macaulay. If $\Ht(I)< n$, then $\q_{I}(R)=\Ht(I)=\f_I(R)$.
\end{corollary}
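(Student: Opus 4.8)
The plan is to reduce the statement to what has already been assembled in this section, in particular Corollary \ref{lgcm} (the generalized Cohen-Macaulay version of Corollary \ref{g3}, announced just after its statement) and the explicit description of vanishing and artinianness for monomial ideals given in Remark \ref{rich}. First I would dispose of the characteristic-$p$ case: by Remark \ref{rich}(i) a monomial Cohen-Macaulay ideal is cohomologically CI, and in the generalized Cohen-Macaulay case, Remark \ref{rich}(iv) applied with $r$ equal to the punctured depth of $R/I$ (which is $\dim R/I$ since $R/I$ is generalized Cohen-Macaulay locally off the irrelevant ideal) gives that $\HH^j_I(R)$ is artinian for all $j > \Ht(I)$; combined with $\HH^j_I(R)=0$ for $j<\grade(I)=\Ht(I)$ this already shows $\q_I(R)\le \Ht(I)$.

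Next I would pin down $\q_I(R)=\Ht(I)$. We have $\HH^j_I(R)=0$ for $j<\Ht(I)$ and $\HH^j_I(R)$ artinian for $j>\Ht(I)$ by the previous paragraph, so it remains to see $\HH^{\Ht(I)}_I(R)$ is not artinian. Set $h:=\Ht(I)$. By Grothendieck non-vanishing, localizing at a height-$h$ prime $\fp\supset I$ gives $\HH^h_I(R)_\fp\simeq\HH^h_{\fp R_\fp}(R_\fp)\neq 0$, so $\Supp\HH^h_I(R)$ contains $\fp$ and hence $\dim_R\HH^h_I(R)=\dim(R/I)\ge 1$ (using $\Ht(I)<n$, i.e.\ $\rad(I)\neq\fm$). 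A module of positive-dimensional support over a local(ized) ring is not artinian, so $\q_I(R)=h=\Ht(I)$.

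Finally I would get $\f_I(R)=\Ht(I)$. The inequality $\Ht(I)\le\f_I(R)$ holds because $\HH^j_I(R)=0$ (hence finitely generated) for all $j<\Ht(I)$. For the reverse inequality, since we have just shown $\q_I(R)=\Ht(I)>0$, Fact \ref{fir} yields $\f_I(R)\le\q_I(R)=\Ht(I)$. Stringing these together gives $\q_I(R)=\Ht(I)=\f_I(R)$, as claimed. The only point requiring a little care — and the one I would expect to be the main obstacle — is justifying the artinianness of $\HH^j_I(R)$ for $j>\Ht(I)$ uniformly in all characteristics from the monomial generalized Cohen-Macaulay hypothesis; this is precisely where Remark \ref{rich}(iv) (the Richardson-type statement, reduced from the squarefree case via Hochster's formula and \cite{ht}) does the work, so the proof should be short once that input is invoked.
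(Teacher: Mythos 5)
Your argument is correct and takes essentially the same route as the paper: artinianness of $\HH^j_I(R)$ for $j\neq\Ht(I)$ via Remark \ref{rich}, non-artinianness of $\HH^{\Ht(I)}_I(R)$ from its positive-dimensional support (exactly the localization/Grothendieck non-vanishing observation recorded in Corollary \ref{lci}), and Fact \ref{fir} together with $\HH^{j}_I(R)=0$ for $j<\Ht(I)$ to conclude $\f_I(R)=\Ht(I)=\q_I(R)$. The only cosmetic difference is that the paper gets the artinianness from Remark \ref{rich}(iii) applied directly to the definition of generalized Cohen-Macaulayness (finite length of $\HH^{n-j}_{\fm}(R/I)$ for $j\neq\Ht(I)$) rather than from item (iv), and no separate discussion of the prime characteristic case is needed since Remark \ref{rich} is characteristic-free.
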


\begin{proof}
 By definition $\HH^{n-j}_{\fm}(R/I) $ is of finite length for all $j\neq \Ht(I)$.
In view of Remark \ref{rich}  $\HH^j_I(R)$ is artinian all $j\neq \Ht(\fa)$. We observed in Corollary \ref{lci} that $\HH^{ \Ht(I)}_{I}(R)$ is not artinian.
We apply this to see $\q_{I}(R)=\Ht(I)=\f_I(R)$.
\end{proof}

\begin{corollary}\label{lgcm}
Let $R$ be a regular local ring  essentially of finite type over a field and $I$  be  generalized Cohen-Macaulay.  Suppose either $\Char R>0$ or $\dim R < 7$.
Then $\id (\HH^i_ I(R))= \dim (\HH^i_ I(R))$ for any $i$. Also, $\q_I(R)=\Ht(I)=\f_I(R)$ provided $\rad(I)\neq\fm$.
\end{corollary}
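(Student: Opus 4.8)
The plan is to run the same scheme as in the proof of Corollary~\ref{lci}, feeding it a different source of vanishing on the punctured spectrum. Since $R$ is Cohen--Macaulay one has $\grade(I)=\Ht(I)$, so $\HH^i_I(R)=0$ for $i<\Ht(I)$ and there both sides of the claimed equality equal $-\infty$. For $i=\Ht(I)$ the equality $\id_R(\HH^{\Ht(I)}_I(R))=\dim_R(\HH^{\Ht(I)}_I(R))$ is exactly Fact~\ref{lci}.A), valid over any regular local ring containing a field. Hence the whole content of the first assertion is concentrated in the range $i>\Ht(I)$.

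The next step is to show that $\HH^i_I(R)$ is artinian for every $i>\Ht(I)$. When $\dim R<7$ this is precisely Lemma~\ref{6gcm} applied with $J:=I$ (it is valid in either characteristic, and its hypotheses "essentially of finite type over a field" and "$R/I$ generalized Cohen--Macaulay" are exactly our standing assumptions). In the remaining case $\Char R>0$ and $\dim R\geq 7$, one argues as in the characteristic $p$ part of Corollary~\ref{lci}: generalized Cohen--Macaulayness makes $R/I$ locally Cohen--Macaulay and equidimensional on the punctured spectrum, so by Peskine--Szpiro $I_\fq$ is cohomologically complete intersection for every $\fq\in\V(I)\setminus\{\fm\}$, whence $\HH^i_I(R)_\fq\simeq\HH^i_{I_\fq}(R_\fq)=0$ for $i\neq\Ht(I_\fq)=\Ht(I)$; together with $(HLVT)$ in the top degree this forces $\Supp\HH^i_I(R)\subseteq\{\fm\}$ for $i>\Ht(I)$, and then the finiteness of Bass numbers of local cohomology in characteristic $p$ makes $\HH^i_I(R)$ artinian. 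In either case, once $\HH^i_I(R)$ is artinian we have $\dim_R(\HH^i_I(R))\leq 0$, and the fundamental inequality $\id_R(\HH^i_I(R))\leq\dim_R(\HH^i_I(R))$ gives $\id_R(\HH^i_I(R))\leq 0$; if the module is nonzero then $\id=0$, so by Matlis theory $\HH^i_I(R)\simeq\E_R(R/\fm)^t$ with $t\geq 1$ and $\dim=0$, while if it is zero both invariants are $-\infty$. This establishes $\id_R(\HH^i_I(R))=\dim_R(\HH^i_I(R))$ for all $i$.

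For the "in particular" clause assume $\rad(I)\neq\fm$. By the previous paragraph $\HH^i_I(R)$ is artinian for all $i\neq\Ht(I)$ and vanishes for $i<\Ht(I)$. Localizing at a minimal prime $\fp$ of $I$ with $\Ht(\fp)=\Ht(I)$ and using Grothendieck non-vanishing, $\fp\in\Supp\HH^{\Ht(I)}_I(R)\subseteq\V(I)$, so $\dim_R(\HH^{\Ht(I)}_I(R))=\dim(R/I)\geq 1$; in particular $\HH^{\Ht(I)}_I(R)$ is not artinian, hence $\q_I(R)=\Ht(I)$. On the other hand $\HH^i_I(R)=0$ for $i<\Ht(I)$, while $\HH^{\Ht(I)}_I(R)\neq 0$ cannot be finitely generated, by Hellus' observation that a finitely generated local cohomology module over a regular local ring containing a field must vanish (see the footnote to Corollary~\ref{lci}); therefore $\f_I(R)=\Ht(I)$. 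Alternatively, $\Ht(I)\leq\f_I(R)\leq\q_I(R)=\Ht(I)$ by Fact~\ref{fir} (the case $\Ht(I)=0$ being trivial).

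The main obstacle is the artinianness of $\HH^i_I(R)$ for $i>\Ht(I)$. In the range $\dim R<7$ it is entirely absorbed into Lemma~\ref{6gcm}, whose proof already carries out the delicate height-by-height analysis (using $(HLVT)$, the bounds on cohomological dimension coming from $\Se_r$ conditions on the quotient, and the artinianness of the top local cohomology module), so no new work is needed here. On the characteristic $p$ side the required input is softer, resting on Peskine--Szpiro and on the finiteness of Bass numbers. Everything else is routine bookkeeping with the inequality $\id\leq\dim$, Matlis duality and Grothendieck (non-)vanishing, exactly as in Corollaries~\ref{lci} and~\ref{d-2}.
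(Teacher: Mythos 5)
Your proposal is correct, and its skeleton coincides with the paper's: the case $i=\Ht(I)$ is settled by Fact \ref{lci}.A), the range $i\neq\Ht(I)$ with $\dim R<7$ is fed to Lemma \ref{6gcm}, and the equalities $\q_I(R)=\Ht(I)=\f_I(R)$ are extracted exactly as in Corollary \ref{lci}. The two genuine (if modest) divergences are in how the non-top degrees are finished. First, where the paper concludes from $\Supp(\HH^i_I(R))\subseteq\{\fm\}$ via Ogus' Fact \ref{clsw}.i), you instead invoke the inequality $\id(\HH^i_I(R))\leq\dim(\HH^i_I(R))$; the paper itself notes (in the proof of Corollary \ref{lci} of Section 4) that these are interchangeable, so this is only a cosmetic difference. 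Second, in the case $\Char R>0$ with $\dim R\geq 7$ the paper simply cites Peskine--Szpiro, Th\'eor\`eme III.4.9, to get $\dim(\HH^i_I(R))\leq 0$ for $i>\Ht(I)$, whereas you re-derive the support statement by hand: generalized Cohen--Macaulayness gives that $R/I$ is Cohen--Macaulay and equidimensional on the punctured spectrum, so Peskine--Szpiro's Cohen--Macaulay vanishing applied to $R_{\fq}$ kills $\HH^i_{I_\fq}(R_\fq)$ for $i\neq\Ht(I)$, and Huneke--Sharp finiteness of Bass numbers upgrades an $\fm$-torsion module with finite socle to an artinian one. The paper's route is shorter by citation; yours is more self-contained and has the advantage of delivering artinianness of $\HH^i_I(R)$ for $i>\Ht(I)$ explicitly, which is what the claim $\q_I(R)=\Ht(I)$ actually needs in that branch (the paper's recorded conclusion $\dim\leq 0$ plus $\id=\dim$ yields artinianness only after adding the finiteness of Bass numbers, a point left implicit there). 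Aside from trivial boundary conventions (e.g.\ $\Ht(I)=0$), which the paper glosses over as well, there is no gap.
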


\begin{proof} In the light of Fact \ref{lci}.A)  we see $\id(\HH_I^{\Ht(I)}(R)) = \dim(\HH^{\Ht(I)}_{I}(R))=\dim (R/I)$.
Without loss of generality we may assume that $i\neq \Ht(I)$.
In the case $\Char R>0$ we know from  \cite[Th\'{e}or\`{e}me III.4.9]{PS} that
$\dim(\HH^{d-j}_{I}(R))\leq0$ for all $j>\dim (R/I)$.
We have $0\leq\id(\HH^{d-j}_{I}(R))\leq\dim(\HH^{d-j}_{I}(R))=0$
provided it is nonzero.  This proves $\id (\HH^i_ I(R))= \dim (\HH^i_ I(R))$ for all $i\neq \Ht(I)$.
Now we deal with the case $\dim R < 7$. In the light of Lemma \ref{6gcm}, one has
$\Supp(\HH^i_ I(R))\subseteq \{\fm\}$ for all $i\neq \Ht(I)$.
We use  Fact \ref{clsw}.i) to see $\id (\HH^i_ I(R))= \dim (\HH^i_ I(R))$ for all $i\neq \Ht(I)$. This completes the proof of first part.
Suppose now that $\rad(I)\neq\fm$. Since $\dim(\HH^{\Ht(I)}_{I}(R))=\dim( R/I)>0$ we get that $\q_I(R)=\Ht(I)=\f_I(R)$.
\end{proof}

\begin{example} (Hartshorne's skew lines in $\PP^3$) Let $R=k [x_1,\ldots,x_4] $. Set $I:=(x_1,x_2)\cap(x_3,x_4)$. Hartshorne used Mayer--Vietoris to show  $\q_I(R)=2$, see \cite[Example 3]{Hcd}. We left to the reader to check that this is an example of:
i) $I$  is locally CI  and equi-dimensional, ii) $I$ is equi-dimensional and $\dim(R/I) = 2$, iii) $I$ is monomial and $R/I$ is generalized Cohen-Macaulay, iv) $\dim R<7$  and $R/I$ is generalized Cohen-Macaulay.
\end{example}

\end{document}